\DeclareMathOperator{\loc}{loc\ }
\DeclareMathOperator{\divg}{div}
\DeclareMathOperator{\dist}{dist}
\DeclareMathOperator{\osc}{osc\ }
\DeclareMathOperator*{\fiint}{\ensuremath{\iint\text{\kern-1.36em{\raisebox{5.87pt}{\rotatebox{-93}{$\setminus$}}}}}\,}
\theoremstyle{plain}
\newtheorem{theorem}[equation]{Theorem}
\newtheorem{lem}[equation]{Lemma}
\newtheorem{thm}[equation]{Theorem}
\newtheorem{cor}[equation]{Corollary}
\theoremstyle{definition}
\newtheorem{defn}[equation]{Definition}
\theoremstyle{remark}
\newtheorem{re}[equation]{Remark}
\numberwithin{equation}{section}
\newcommand{\norm}[1]{\left\Vert#1\right\Vert}
\newcommand{\abs}[1]{\left\vert#1\right\vert}
\newcommand{\br}[1]{\left(#1\right)}
\newcommand{\bbr}[1]{\Big(#1\Big)}
\newcommand{\set}[1]{\left\{#1\right\}}
\newcommand{\Real}{\mathbb R}
\newcommand{\Rn}{\mathbb R^n}
\newcommand{\eps}{\varepsilon}
\newcommand{\A}{\mathcal{A}}
\newcommand{\LL}{\mathcal{L}}
\newcommand{\vp}{\varphi}
\newcommand{\wt}{\widetilde}
\newcommand{\bdy}{\partial}
\newcommand{\stcomp}[1]{{#1}^{\mathsf{c}}}
\newcommand{\R}{\mathbb{R}} 
\newcommand{\cN}{\mathfrak{N}} 
\newcommand{\cA}{\mathfrak{A}}
\newcommand{\sm}{\setminus} 
\renewcommand{\epsilon}{\varepsilon}
\renewcommand{\d}{\partial} 
\newcommand{\ms}{\medskip}
\begin{document}

\title[Carleson estimates for the Green function]
{Carleson estimates for the Green function on domains with lower dimensional boundaries}
\thanks{
G. David was partially supported by the European Community H2020 grant GHAIA 777822,
and the Simons Foundation grant 601941, GD.
S. Mayboroda was partly supported by the NSF RAISE-TAQS grant DMS-1839077 and the Simons foundation grant 563916, SM}

\author{Guy David}
\author{Linhan Li}
\author{Svitlana Mayboroda}

\newcommand{\Addresses}{{
  \bigskip
  \footnotesize

 Guy David, \textsc{Universit\'e Paris-Saclay, CNRS, 
 Laboratoire de math\'ematiques d'Orsay, 91405 Orsay, France} 
 \par\nopagebreak
  \textit{E-mail address}: \texttt{guy.david@universite-paris-saclay.fr}

  \medskip

  Linhan Li, \textsc{School of Mathematics, University of Minnesota, Minneapolis, MN 55455, USA}\par\nopagebreak
  \textit{E-mail address}: \texttt{li001711@umn.edu}

  \medskip

  Svitlana Mayboroda, \textsc{School of Mathematics, University of Minnesota, Minneapolis, MN 55455, USA}\par\nopagebreak
  \textit{E-mail address}: \texttt{svitlana@math.umn.edu}

}}

\begin{abstract}

In the present paper we
consider an elliptic divergence form operator in $\Rn\setminus\Real^d$ with $d<n-1$ and 
prove that its Green function is almost affine, 
in the sense that the
normalized difference between the Green function with a sufficiently far away pole and 
a suitable affine function at every scale satisfies a Carleson measure estimate. 
The coefficients of the operator can be very oscillatory, and only need to satisfy some condition similar to the traditional quadratic Carleson condition.

\end{abstract}

\maketitle

\tableofcontents

\section{Introduction and main results}
In a recent paper \cite{DLMgreen}, we showed that for a slightly larger class of elliptic operators than the Dahlberg-Kenig-Pipher operators on the upper half-space $\Real^{d+1}_+$, the Green function is well approximated by affine functions. The current paper extends this result to higher co-dimensions. 
That is, we consider the 
Green function on $\Real^{n}\setminus\Real^d$, with $d<n-1$, for operators satisfying a condition analogous to the Dahlberg-Kenig-Pipher condition on $\Real^{n}\setminus\Real^d$ and show that it is close, in a suitable sense, to affine functions. There are multiple challenges specific to the higher-codimensional setting, but before discussing those, let us provide some context for this work. 

There has been a wide success in establishing connections between the geometry of the boundary of $\Omega\subset \R^n$ and properties of solutions of an elliptic PDE on $\Omega$ 
(\cite{HM2014uniform},\cite{HMU2014uniform},\cite{HMMTZ2020uniform},\cite{AHMMT2019harmonic}, etc). However, when the boundary of $\Omega$ has dimension lower than $n-1$, results are relatively rare. Essentially the only characterization of the uniform rectifiability of a lower-dimensional set by a PDE property is the recent work \cite{david2020approximation}. However, it pertains to weak rather than strong estimates on the solutions and, in particular, yields qualitative rather than quantitative results. This not merely a technical obstacle: the proofs in \cite{david2020approximation}, relying on the blow-up techniques, are not amenable to a more quantitative analysis. On the other hand, the free boundary results obtained in \cite{david2020approximation} are even stronger than perhaps is natural to expect. Specifically, the authors show that even weak estimates on the Green function imply uniform rectifiability, and hence, if one can show that the Green function is close to the distance to the boundary in a strong, quantifiable sense, this would furnish the first quantifiable 
PDE characterization of the lower-dimensional uniform rectifiability. The present paper is the first step in this direction. 

Aside from the aforementioned weak results, it has two important pre-runners. In \cite{DLMgreen}, we managed to prove that the Green function is close to the distance function in a precise, quantitative way in the upper half-space (that is, in co-dimension 1). In \cite{DFM2021Green}, a different in form but similar in spirit, quantitative estimate for the Green function is obtained on domains with uniformly rectifiable sets of dimension strictly less than $n-1$ using a completely different method. The goal of this paper is to obtain a precise estimate for the Green function for more general operators than the ones considered in \cite{david2020approximation} and \cite{DFM2021Green} on domains with lower dimensional boundary. Roughly speaking, the operators considered in \cite{david2020approximation} and \cite{DFM2021Green} are close to the analogues of the Laplacian. In the present paper, we consider operators with much more oscillatory coefficients, albeit trading off by considering only flat boundary. Let us be more precise.

Consider $\Omega=\Rn\setminus\Gamma$, where $\Gamma\subset\Rn$ is Ahlfors-regular of dimension $d<n-1$. This means that there is a constant $C_0\ge1$ such that 
\begin{equation}\label{AR}
    C_0^{-1}r^d\le\mathcal{H}^d(\Gamma\cap B_r(x))\le C_0 r^d,
\end{equation}
for all balls $B_r(x)$ centered on $x\in\Gamma$, with radius $r>0$. Classical elliptic operators are not appropriate for boundary value problems on $\Omega$, as their solutions cannot ``see" the lower dimensional set $\Gamma$.  To overcome this obstacle, the first and third authors of the present paper, together with J. Feneuil , developed an elliptic theory on such domains with degenerate elliptic operators (\cite{david2017elliptic}). It was shown that the general results,
such as the maximum principle, trace and extension theorems, existence of the harmonic
measure and Green function, all hold for the operators
\[\LL = -\divg (\A \dist(\cdot, \Gamma)^{d+1-n}\nabla),\]
where $\dist(\cdot, \Gamma)$ is the Euclidean distance to the boundary, and $\A$ is a matrix of real, bounded, measurable functions that satisfies the usual ellipticity conditions. 
That is, there is some $\mu_0>1$ such that 
\begin{equation}\label{cond ellp}
    \begin{aligned}
\langle \A(X)\xi,\zeta\rangle \le \mu_0\abs{\xi}\abs{\zeta} 
&\quad\text{for  }X \in \Omega \text{ and  }\xi, \eta\in\Rn,
\cr 
\quad \langle \A(X)\xi,\xi\rangle \ge \mu_0^{-1}\abs{\xi}^2
&\quad\text{for  }X \in \Omega \text{ and  }\xi\in\Rn.
\end{aligned}
\end{equation}
Some of the results in this general setting are included in Section \ref{sec pre}.

For the purpose of this paper, we focus only on $\Gamma=\set{(x,t)\in\Rn: t=0}\cong\Real^d$, and our domain is $\Omega=\Rn\setminus\Real^d=\set{(x,t)\in\Real^d\times\Real^{n-d}: t\neq 0}$. Notice that in this case, for a point $X=(x,t)\in\Rn$, $\dist(X,\Gamma)=\abs{t}$.

Before introducing our conditions on the operator, let us define Carleson measures on the upper half-space $\Real^{d+1}_+$. We shall systematically use lower case letters for points in $\Real^d$ and capital letters for points in $\Rn$. It will be necessary to distinguish a ball in $\Rn$ from a ball in $\Real^{d+1}$, so we use the cumbersome notation $B_r^{(d+1)}(x)$ for a ball in $\Real^{d+1}$ with radius $r$ centered at $(x,0)\in\Real^{d+1}$. The main purpose of defining balls in $\Real^{d+1}$ is to define Carleson balls in $\Real^{d+1}_+$, that is, we let
$T(x,r)=B_r^{(d+1)}(x)\cap\Real^{d+1}_+$. Although we do not emphasize it in notation, $T(x,r)$ is $(d+1)$-dimensional. For $x\in \R^d$ and $r > 0$, we denote by $\Delta(x,r)$ the surface ball $B_r(x) \cap \Gamma$.  Thus $\Delta(x,r)$ is a ball in $\R^d$, and $T(x,r)$ is a half ball in $\Real^{d+1}_+$ over $\Delta(x,r)$. 
We may simply write $T_\Delta$ for a half ball over $\Delta\subset\Real^d$.
\begin{defn}[Carleson measures on $\Real^{d+1}_+$]\label{d13}
We say that a nonnegative Borel measure $\mu$ is a Carleson measure on $\Real_+^{d+1}$, 
if its Carleson norm 
\[\norm{\mu}_{\mathcal{C}}:=\underset{\Delta\subset \Real^d}{\sup}\frac{\mu(T_\Delta)}{\abs{\Delta}}\]
is finite, where 
the supremum is over all the surface balls $\Delta$ and 
$\abs{\Delta}$ is the Lebesgue measure of $\Delta$ in $\Real^d$. We use $\mathcal{C}$ to denote the set of Carleson measures on $\Real^{d+1}_+$. 

For any 
surface ball
$\Delta_0\subset\Real^d$, we use $\mathcal{C}(\Delta_0)$ to denote the set of Borel measures satisfying the Carleson condition restricted to $\Delta_0$, 
i.e., such that
\[\norm{\mu}_{\mathcal{C}(\Delta_0)}:=\underset{\Delta\subset \Delta_0}{\sup}\frac{\mu(T_\Delta)}{\abs{\Delta}} < +\infty
.\]
\end{defn}

Next we want to define our conditions that say that the matrix $\A = \A(X)$ is often close to a ``constant" coefficient matrix $\A_0$. But since our operators have a singular weight $\abs{t}^{d+1-n}$, we need to impose some structural assumptions on the matrix $\A_0$ so that the operator $\LL_0:=-\divg(\mathcal{A}_0\abs{t}^{d+1-n}\nabla)$ behaves like a constant coefficient operator in $\Rn\setminus\Real^d$. 

It was observed in \cite{david2019new} that given an elliptic operator $L=-\divg(\wt A\nabla)$ defined on $\Real^{d+1}_+$, one can construct a degenerate elliptic operator $\LL=-\divg(\A\nabla)$
so that if $v$ is a solution to $Lv=0$ in $\Real^{d+1}_+$, then the function $u$ defined on $\Rn\setminus\Real^d$ by $u(x,t)=v(x,\abs{t})$ is a solution to $\LL u=0$ on $\Rn\setminus\Real^d$. The precise construction is the following. Consider a $(d+1)\times (d+1)$ matrix $\wt A$ written in a block form as 
\[
  \wt A =\begin{bmatrix}
\begin{BMAT}{c.c}{c.c}
A & \mathbf{b}  \\
\mathbf{c} & d
\end{BMAT}
\end{bmatrix},
\]
where $A$ is a $d\times d$ matrix, $\mathbf{b}$ is a $d\times 1$ vector, $\mathbf{c}$ is a $1\times d$ vector, and $d$ is a scalar function. Then for $n>d+1$, the $n\times n$ matrix $\A$ is constructed from $\wt A$ as 
\begin{equation}\label{A fmd+1}
    \A =\begin{bmatrix}
\begin{BMAT}{c.c}{c.c}
A & \frac{\mathbf{b}\, t}{\abs{t}} \\
\frac{t^T\mathbf{c}}{\abs{t}}& dI_{n-d}
\end{BMAT}
\end{bmatrix},
\end{equation}
where $I_{n-d}$ is the identity matrix of size $n-d$,  $t$ is seen as a horizontal vector in $\Real^{n-d}$, and thus $\mathbf{b}\, t$ is a $d\times (n-d)$ matrix and $t^T\mathbf{c}$ is a $(n-d)\times d$ matrix. 

Inspired by this observation, we fix the aforementioned class of matrices constructed from constant matrices in $\Real^{d+1}$.
\begin{defn}[The class $\cA_0(\mu_0)$] \label{d15} 
We define $\cA_0(\mu_0)$ to be the class of $n\times n$ matrices satisfying the ellipticity conditions \eqref{cond ellp} with constant $\mu_0$ that can be written as the following block matrix 
\begin{equation}\label{eq A0 def}
  \A_0 =\A_0(x,t)=\begin{bmatrix}
\begin{BMAT}{c.c}{c.c}
A_0 & \frac{\mathbf{b_0} t}{\abs{t}} \\
\frac{t^T\mathbf{c_0}}{\abs{t}}& d_0I_{n-d}
\end{BMAT}
\end{bmatrix}.
\end{equation}
Here, $A_0$ is a $d\times d$ constant matrix, $\mathbf{b_0}$ is a $d\times 1$ constant vector, $\mathbf{c_0}$ is a $1\times d$ constant vector, $d_0$ is a real number.
\end{defn}

The reason that this class of matrices plays the role of constant matrices for our purpose is actually different from the above observation made in \cite{david2019new}. We want them to relate back to constant-coefficient operators in $\Real^{d+1}$, not the other way around. In fact, it is shown in Section~\ref{sec cnnct} that for any $\A_0\in\cA_0(\mu_0)$, any solution of $-\divg(\A_0\nabla u)=0$ can be transformed into a solution of an elliptic equation in $\Real^{d+1}$. Notice that a solution $u(x,t)$ of $-\divg(\A_0\nabla u)=0$ is not necessarily radial in $t$, while a solution constructed from a solution of an elliptic equation in $\Real^{d+1}_+$ as above is radial in $t$.

Now let us return to conditions on $\A$. Since we shall compare $\A$ and $\A_0\in\cA_0(\mu_0)$ at every scale, we introduce Whitney regions 
in $\Rn$: for any $(x,r)\in\Real^{d+1}_+$, define 
\begin{equation}\label{def Wxr}
    W(x,r):=\set{(y,t)\in\Rn: y\in\Delta(x,r),\,\frac{r}{2}\le\abs{t}\le r}.
\end{equation}
Notice that $W(x,r)$ is an annular region in $\Rn$ whose distance to $\Gamma$ is $r/2$.

The difference between $\A$ and some matrix $\A_0\in\cA_0(\mu_0)$ at a given scale is measured by the following quantity. For $x\in\Real^d$ and $r>0$, define
\begin{equation}\label{def alpha}
    \alpha(x,r):=\inf_{\A_0 \in \cA_0(\mu_0)}
\bigg\{\frac{1}{m(W(x,r))}\int_{(y,t) \in W(x,r)} |\A(y,t)-\A_0|^2\,\frac{dydt}{\abs{t}^{n-d-1}}\bigg\}^{1/2}
\end{equation} 
Here, $m(W(x,r))$ is the measure of $W(x,r)$ with weight 
${\abs{t}^{-n+d+1}}$. 

\begin{defn}[Weak DKP condition]\label{def wDKP}
We say that the coefficient matrix $\A$ satisfies the weak DKP condition with
constant $M > 0$, if $\alpha(x,r)^2 \frac{dxdr}{r}$ is a 
Carleson measure on $\R^{d+1}_+$, with norm
\begin{equation}\label{eq wDKP}
\cN(\A) : = \norm{\alpha(x,r)^2 \frac{dxdr}{r}}_{\mathcal{C}} \le M.
\end{equation}
\end{defn}
The name comes from Dalhberg, Kenig and Pipher. In 1984, Dahlberg first  conjectured  that a Carleson condition on the coefficients, which is roughly that 
$\abs{\nabla A}^2dxdr/r$ be 
a Carleson measure on $\Real^{d+1}_+$, guarantees the 
absolute continuity of the elliptic measure with respect 
to the Lebesgue measure. 
In 2001, Kenig and Pipher \cite{kenig2001dirichlet} proved Dahlberg's conjecture. 

The condition we consider here is weaker than the classical DKP condition in the following sense. 
Consider a matrix $\A$ of bounded, measurable functions defined on $\Rn$ that can be written as \eqref{A fmd+1}, but with the coefficients depending on $x,t$. Assume that $A$, $\mathbf{b}$, $\mathbf{c}$ and $d$ all satisfy the usual DKP condition with Carleson norm $M$. That is, 
\begin{equation*}
    \norm{\sup_{(y,t)\in W(x,r)}\abs{\nabla A(y,t)}^2rdxdr}_{\mathcal{C}}\le M,
\end{equation*}
and 
similarly for $\mathbf{b}$, $\mathbf{c}$ and $d$. One can verify that under this assumption, the matrix $\A$ satisfies the weak DKP condition with constant $M$. 
We point out that from our definition, a matrix $\A$ that satisfies the
weak DKP condition does not have to be of the form \eqref{A fmd+1}. Moreover, we can always add to $\A$  a matrix $\mathcal{D}$ that satisfies 
\[
d\mu(x,r)=\sup_{(y,t)\in W(x,r)}\mathcal{D}(y,t)^2\frac{dxdr}{r}\in\mathcal{C},\]
and the new matrix still satisfies the weak DKP condition if $\A$ does. We remark that our Definition \ref{def wDKP} is the higher co-dimensional analogue of what we defined in \cite{DLMgreen}, where we say that a $(d+1)\times(d+1)$ matrix satisfies the weak DKP condition with constant $M$, if \eqref{eq wDKP} holds with $\A_0$ replaced by some constant $(d+1)\times(d+1)$ matrix 
in the definition \eqref{def alpha} of $\alpha(x,r)$.

Let us now turn to the approximation of the Green function by affine functions in higher co-dimension. In \cite{DLMgreen}, we showed that any solution in $T(x_0,R)$ that vanishes on $\Delta(x_0,R)$ is locally well approximated by affine functions in $T(x_0,R/2)$, with essentially uniform Carleson bounds. More precisely, we proved the following result.
\begin{theorem}[\cite{DLMgreen} Theorem 1.13]\label{mt d=n-1}
Let $\wt A$ be a $(d+1)\times (d+1)$ matrix of real-valued functions on
$\Real^{d+1}_+$ satisfying the ellipticity conditions with constant $\mu_0$. If $\wt A$ satisfies the weak DKP condition with some constant $M\in(0,\infty)$,
and if we are given $x_0 \in \R^d$, $R>0$, and a positive solution $u$ of 
$Lu=-\divg\br{\wt A\nabla u}=0$ in $T(x_0,R)$, with $u=0$ on $\Delta(x_0,R)$, 
then for some $C$ depending only on $d$ and $\mu_0$, there holds
\begin{equation*}
\norm{\beta_u(x,r) \frac{dxdr}{r}}_{\mathcal{C}(\Delta(x_0,R/2))} \leq C+CM,
\end{equation*}
where \[
\beta_u(x,r)=\frac{\fint_{T(x,r)}\abs{\nabla\br{u(y,t)-\lambda_{x,r}(u)\, t}}^2dydt}{\fint_{T(x,r)} |\nabla u(y,t)|^2dydt},
\]
and $\lambda_{x,r}(u) = \fint_{T(x,r)}\partial_t u(z,t)dzdt$.
\end{theorem}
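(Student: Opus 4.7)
The plan is to run a frozen-coefficient comparison at each scale and then iterate. Fix $(x,r)$ with $T(x,4r)\subset T(x_0,R)$, pick $\wt A_0$ a (near-)minimizer of the $L^2$ distance to $\wt A$ on $T(x,2r)$ (so $\fint_{T(x,2r)}|\wt A-\wt A_0|^2 \lesssim \alpha(x,2r)^2$), and let $v$ be the $L_0$-solution, with $L_0=-\divg(\wt A_0\nabla\cdot)$, that agrees with $u$ on $\partial T(x,2r)$ and vanishes on $\Delta(x,2r)$. Testing $-\divg(\wt A\nabla u)=-\divg(\wt A_0\nabla v)=0$ against $u-v$ and using ellipticity gives
\[
\int_{T(x,2r)} |\nabla(u-v)|^2 \lesssim \int_{T(x,2r)} |\wt A-\wt A_0|^2 \, |\nabla u|^2.
\]
Because $u\ge 0$ vanishes on $\Delta(x_0,R)$, a Meyers-type reverse Hölder improvement for $\nabla u$ (combined with the $L^\infty$-bound $|\wt A-\wt A_0|\le 2\mu_0$) lets one convert the pointwise product on the right into the averaged bound
\[
\fint_{T(x,r)} |\nabla(u-v)|^2 \lesssim \alpha(x,2r)^{2}\, I(x,4r),\qquad I(x,r):=\fint_{T(x,r)}|\nabla u|^2.
\]

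Next I would exploit that $L_0$ has constant coefficients and $v$ vanishes on the flat boundary $\Delta(x,2r)$. A Campanato-type boundary decay for such constant-coefficient solutions (smoothness up to a flat boundary means the leading Taylor polynomial of $v$ is $\nu\, t$ and the remainder carries an extra power of the scale) gives, for every $\tau\in(0,1/2)$,
\[
\fint_{T(x,\tau r)} |\nabla(v-\nu_{\tau r}\, t)|^2 \le C\tau^2 \fint_{T(x,r)}|\nabla v|^2
\]
for a suitable scalar $\nu_{\tau r}$. Combining this decay with the comparison estimate and choosing $\tau$ so small that $C\tau^2\le\tfrac14$ yields the one-step iteration
\[
\wt\beta_u(x,\tau r) \le \tfrac12\, \wt\beta_u(x,r) + C\, \alpha(x,2r)^2 \, I(x,4r),
\]
where $\wt\beta_u(x,r):=\fint_{T(x,r)}|\nabla(u-\lambda_{x,r}(u)\,t)|^2$ is the unnormalized version of $\beta_u$. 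Iterating dyadically in $r$ produces
\[
\wt\beta_u(x,r) \lesssim \sum_{k\ge 0} 2^{-k}\, \alpha(x, 2^k r)^2 \, I(x, 2^{k+2} r) \;+\; \text{(tail at scale comparable to }R\text{)}.
\]

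It remains to divide by $I(x,r)$ and integrate against $dxdr/r$ over the Carleson box $T(x_0,R/2)$. The main obstacle is that the normalizer $I(x,r)$ in $\beta_u=\wt\beta_u/I$ is not pointwise comparable to the factors $I(x,2^k r)$ that appear on the right-hand side, so a naive Fubini does not close the bound. The remedy I would use is a corona/stopping-time decomposition of $\R^{d+1}_+$ into regions on which $I(x,r)$ is essentially doubling in $r$ and slowly varying in $x$; within each region the ratios $I(x,2^k r)/I(x,r)$ grow at most geometrically, the $2^{-k}$-series is summable, and Fubini together with the weak DKP hypothesis $\|\alpha(x,r)^2\,dxdr/r\|_{\mathcal{C}}\le M$ controls the contribution. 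The stopping-time boundary terms are then handled by observing that for a positive solution vanishing on a flat boundary, jumps of $I$ encode honest energy growth of $u$ and so their total $dxdr/r$-mass is itself Carleson, via an $S<N$-type square function estimate. This stopping-time bookkeeping, rather than the frozen-coefficient comparison or the Campanato decay, is where I expect the main work to lie.
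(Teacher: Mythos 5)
Your local, fixed-scale analysis is essentially the right skeleton and matches the argument in \cite{DLMgreen}: freeze the coefficients, compare $u$ with the constant-coefficient solution $v$ via the energy identity, use a Meyers/reverse H\"older improvement for $\nabla u$ to handle the product $|\wt A-\wt A_0|^2|\nabla u|^2$, and use boundary $C^{1,\alpha}$ regularity (Campanato decay) of $v$ to get the one-step improvement. One caveat: the clean bound $\fint_{T(x,r)}|\nabla(u-v)|^2\lesssim\alpha(x,2r)^2 I(x,4r)$ is not actually attainable this way. H\"older with the Meyers exponent $p>2$ (close to $2$) together with the $L^\infty$ bound on $|\wt A-\wt A_0|$ only yields a \emph{small power} of $\alpha$, so the correct statement carries an extra additive term of the form $\delta\, I$ (or $K^{(2-p)/2}J_u$ after a Chebyshev splitting at level $K$), which must then be absorbed into the contraction factor by choosing $\delta$ (or $K$) small. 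This is exactly how Lemmas \ref{l319} and \ref{lem Jur} of the present paper are structured, and it is fixable within your outline.

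The genuine gap is in your final step. You identify the incomparability of the normalizer $I(x,r)$ across scales as the main obstacle and propose a corona/stopping-time decomposition, with the stopping-time boundary terms controlled by an unspecified ``$S<N$-type square function estimate'' for the jumps of $I$. That last claim is exactly the kind of assertion that requires a full proof, and it is not needed, because for a \emph{positive} solution vanishing on the flat boundary the energy is uniformly non-degenerate downward in scale: $I(x,\rho)\ge c\, I(x,r)$ for all $0<\rho<r/2$ with $c=c(d,\mu_0)$ \emph{independent of} $\rho/r$. Indeed $I(x,\rho)\ge\lambda_{x,\rho}(u)^2$, the oscillation of $\rho\mapsto\lambda_{x,\rho}(u)$ is $O\big(\rho\, I(x,r)^{1/2}\big)$ by the boundary regularity of the frozen-coefficient comparison solution (plus the comparison lemma), so $\lambda_{x,\rho}(u)\to\lambda_{x,0}(u)$, and $\lambda_{x,0}(u)\gtrsim I(x,r)^{1/2}$ by the comparison principle with the linear solution $t$ together with $I(x,r)\approx u(A_{x,r})^2/r^2$ at a corkscrew point (this is where positivity is used essentially, not merely for reverse H\"older). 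This is Lemma \ref{lem Eu0 lwbd}/\ref{lem Eu lwbd} here. With it, the \emph{normalized} quantity satisfies the self-improving inequality $\beta_u(x,\tau_0 r)\le\tfrac12\beta_u(x,r)+C\gamma(x,r)^2$ directly (Corollary \ref{cor beta}), and the Carleson bound then follows from the trivial bound $\beta_u\le1$ (orthogonality of the minimizer $\lambda_{x,r}$), telescoping over the scales $\tau_0^k$, and Fubini against the Carleson measure $\alpha^2\,dxdr/r$ — no corona decomposition and no square function estimate. A crude Harnack-chain bound $I(x,\tau r)\ge\tau^{N}I(x,r)$ would not suffice here either, since $\tau_0$ is chosen after the contraction factor and the argument would become circular; the uniform-in-$\rho$ lower bound on $\lambda_{x,\rho}(u)$ is the missing idea.
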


In higher co-dimension, we want to measure in a similar way the closeness between a solution and an affine function in $\Rn\setminus\Real^d$. Given a positive solution $u$ of $\LL u=0$ in a ball $B_r(x)$ centered on $\Gamma$, the best affine function that approximates $u$ in $B_r(x)$ should be $\lambda_{x,r}(u)\abs{t}$, where 
\begin{equation}\label{def lambda}
     \lambda_{x,r}(u)=\frac{1}{m(B_r(x))}\int_{B_r(x)}\frac{\nabla_t u(z,t)\cdot t}{\abs{t}}\frac{dzdt}{\abs{t}^{n-d-1}}.
\end{equation}
In Section~\ref{sec cnnct}, we will see that this $\lambda_{x,r}(u)$ is indeed the best coefficient of $\abs{t}$ to approximate $u$ in $B_r(x)$, and that it is closely related to the best coefficient in the co-dimension one setting.

As in the co-dimension one case, the  proximity  of  the  two  functions is measured by the weighted $L^2$ average of the difference of the gradients divided by the weighted local energy of $u$. That is, we set
\begin{equation}\label{def J}
    J_u(x,r):=\frac{1}{m(B_r(x))}\int_{B_r(x)}\abs{\nabla_{y,t}\br{u(y,t)-\lambda_{x,r}(u)\abs{t}}^2}\frac{dydt}{\abs{t}^{n-d-1}},
\end{equation}
and then divide by
\begin{equation}\label{def E}
    E_u(x,r):=\frac{1}{m(B_r(x))}\int_{B_r(x)}\abs{\nabla u(y,t)}^2\frac{dydt}{\abs{t}^{n-d-1}},
\end{equation}
to get the number
\begin{equation}\label{def beta}
    \beta_u(x,r):=\frac{J_u(x,r)}{E_u(x,r)}.
\end{equation}

The solutions 
considered here 
are all weak solutions in a weighted Sobolev space. Their values on the boundary $\Gamma=\Real^d$ are considered in the trace sense. All 
this is made precise in Section~\ref{sec pre}, and also in Section~\ref{subsec fs}.
Our main result is the following.
\begin{thm}\label{mt}
Let $\A$ be an $n\times n$ matrix of bounded, real-valued functions on 
$\Real^n$ satisfying the ellipticity conditions \eqref{cond ellp}. 
If $\A$ satisfies the weak DKP condition with some constant $M\in(0,\infty)$, and if we are given $x_0 \in \R^d$, $R>0$, and a nonnegative solution $u\in W_r(B_R(x_0))$ of 
$\LL u=-\divg_{x,t}\br{\A(x,t) \abs{t}^{d+1-n}\nabla_{x,t} u}=0$ in $B_R(x_0)\setminus\Gamma$, with $Tu=0$  on $\Gamma\cap B_R(x_0)$, 
then the function $\beta_u$ defined by \eqref{def beta} satisfies a Carleson condition in $T(x_0,R/2)$, and more precisely
\begin{equation}\label{eq mt1}
\norm{\beta_u(x,r) \frac{dxdr}{r}}_{\mathcal{C}(\Delta(x_0,R/2))} \leq C+CM
\end{equation}
where $C$ depend 
only on $d$, $n$ and  $\mu_0$.
\end{thm}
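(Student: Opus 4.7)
The plan is to bootstrap from the codimension-one result (Theorem \ref{mt d=n-1} of \cite{DLMgreen}) via the correspondence described in Section \ref{sec cnnct} between the class $\cA_0(\mu_0)$ and matrices in $\Real^{d+1}$, and to pass from $\A$ to the nearest element of $\cA_0(\mu_0)$ at each scale by a freezing-of-coefficients argument. The weak DKP hypothesis \eqref{eq wDKP} is designed precisely to control the resulting scale-by-scale perturbation errors.

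\textbf{Freezing at each scale.} Fix $x\in\Real^d$ and $r>0$ with $B_r(x)\subset B_{R/2}(x_0)$. Choose $\A_0=\A_0^{(x,r)}\in\cA_0(\mu_0)$ attaining (up to a factor $2$) the infimum in \eqref{def alpha} at a dilated scale $(x,Kr)$, for a large universal $K$, and let $v$ solve $\LL_0 v=0$ in $B_{Kr}(x)\setminus\Gamma$ with $v=u$ on $\bdy B_{Kr}(x)$ and $Tv=0$ on $\Gamma$. Setting $w=u-v$, we have $w=0$ on $\bdy B_{Kr}(x)$ and $Tw=0$ on $\Gamma$, and
\[
\LL_0 w=\divg_{y,t}\bigl((\A-\A_0)\abs{t}^{d+1-n}\nabla u\bigr);
\]
a weighted energy estimate (cf. Section \ref{sec pre}) then yields
\[
\int_{B_{Kr}(x)}\abs{\nabla w}^2\abs{t}^{d+1-n}\,dy\,dt\lesssim \int_{B_{Kr}(x)}\abs{\A-\A_0}^2\abs{\nabla u}^2\abs{t}^{d+1-n}\,dy\,dt.
\]

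\textbf{Applying the codimension-one theorem and summing.} Because $\A_0\in\cA_0(\mu_0)$, the correspondence of Section \ref{sec cnnct} associates to the radial-in-$t$ part of $v$ a solution $\wt v$ of a constant-coefficient equation $-\divg(\wt A\nabla \wt v)=0$ on a half-ball in $\Real^{d+1}_+$, and identifies $\lambda_{x,r}(v)\abs{t}$ with its codimension-one counterpart. Since $\wt A$ is constant it trivially satisfies the weak DKP hypothesis with $M=0$, so Theorem \ref{mt d=n-1} yields a universal Carleson bound for $\beta_{\wt v}$ that transfers to a bound for $\beta_v$ at a slightly smaller scale. Combining with the triangle inequality
\[
J_u(x,r)^{1/2}\lesssim J_v(x,r)^{1/2}+\bigl(m(B_r(x))^{-1}\int_{B_r(x)}\abs{\nabla w}^2\abs{t}^{d+1-n}\,dy\,dt\bigr)^{1/2}
\]
(in which $\abs{\lambda_{x,r}(u)-\lambda_{x,r}(v)}$ is controlled by the same $L^2$-norm of $\nabla w$ through \eqref{def lambda}), the error bound from the previous step, and the non-degeneracy $E_u\gtrsim E_v$ coming from nonnegativity of $u$ together with Harnack's inequality, one arrives at the scale-by-scale comparison
\[
\beta_u(x,r)\lesssim \beta_v(x,r)+\alpha(x,Kr)^2.
\]
Integrating $\frac{dx\,dr}{r}$ over $T(x_0,R/2)$, invoking \eqref{eq wDKP} for the second summand and a covering/stopping-time argument in the spirit of \cite{DLMgreen} to sum the first, yields \eqref{eq mt1}.

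\textbf{Main obstacle.} The delicate point is the codimension-one transfer: solutions of $\LL_0 v=0$ need not be radial in $t$, whereas the correspondence of Section \ref{sec cnnct} naturally produces radial-in-$t$ solutions. One must therefore split $v=v_{\mathrm{rad}}+v_{\mathrm{osc}}$ into its rotational average in $t$ and the orthogonal complement; $v_{\mathrm{rad}}$ is handled directly by Theorem \ref{mt d=n-1}, while the contribution of $v_{\mathrm{osc}}$ to $J_v$ must be absorbed through a Caccioppoli-type estimate that exploits the vanishing of $v_{\mathrm{osc}}$ on $\Gamma$ and a spectral gap for the spherical Laplacian in the $t$-variable. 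Verifying that \eqref{def lambda} is indeed the correct best-fit slope after this decomposition, and that the weighted normalizations $m(B_r(x))$ and $\abs{t}^{d+1-n}$ are compatible with those underlying Theorem \ref{mt d=n-1}, constitutes the bulk of the technical bookkeeping.
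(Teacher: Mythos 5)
Your overall architecture (freeze $\A$ to a nearby $\A_0\in\cA_0(\mu_0)$ at each scale, use the Section~\ref{sec cnnct} correspondence with $\Real^{d+1}_+$, control the perturbation by the weak DKP condition) matches the paper's, but the central step does not work as written. You compare $\beta_u(x,r)\lesssim\beta_{v}(x,r)+\alpha(x,Kr)^2$ where $v=v^{(x,r)}$ is the frozen solution on $B_{Kr}(x)$, and then propose to sum the first term using the Carleson bound for $\beta_{\wt v}$ supplied by Theorem~\ref{mt d=n-1}. This cannot close: the Carleson estimate for a fixed $v$ controls the \emph{integral} of $\beta_v(y,s)\frac{dyds}{s}$ over all sub-scales, but gives no pointwise information about $\beta_{v^{(x,r)}}(x,r)$ at the top scale of the very ball on which $v^{(x,r)}$ was constructed — and since $v$ changes with $(x,r)$, you are only ever using diagonal values. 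As $\beta_v\le 1$ is the only a priori pointwise bound, the resulting integral $\int\beta_{v^{(x,r)}}(x,r)\frac{dxdr}{r}$ is not controlled. What is actually needed, and what the paper proves as its Key Lemma (Lemma~\ref{lem Ju0 decay}), is a \emph{single-scale decay} estimate $J_v(\tau r)\le\theta_0 J_v(r)$ for solutions of $\LL_0v=0$; this converts the comparison into the self-improving inequality $\beta_u(x,\tau_0R)\le\frac12\beta_u(x,R)+C\gamma(x,R)^2$ (Corollary~\ref{cor beta}), which is then iterated and summed against the Carleson measure $\gamma^2\frac{dxdr}{r}$.

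Your ``main obstacle'' paragraph correctly locates the hard point — solutions of $\LL_0v=0$ need not be radial in $t$, so Lemma~\ref{lem u0-lambda t d=n-1} applies only to the spherical average $\wt v_\theta$ — but the proposed resolution (a Caccioppoli estimate plus a spectral gap for $\Delta_\omega$) is only a sketch and is not the paper's route: the paper handles $v-\wt v_\theta$ by a compactness/blow-up argument, using the Lipschitz bound of Lemma~\ref{lem Lip} and the comparison principle at infinity (Corollary~\ref{cor cmpsn}) to show any blow-up limit is a nonzero multiple of $\abs{t}$, contradicting the vanishing spherical average. Two further pieces you elide are also load-bearing in the paper: converting $\int\abs{\A-\A_0}^2\abs{\nabla u}^2dm$ into $\gamma^2E_u$ requires the reverse H\"older inequalities of Section~\ref{sec RH} (Lemma~\ref{l319}), and relating the Whitney-region quantity $\alpha$ of the hypothesis to the solid-ball quantity $\gamma$ used in the decay estimate requires Lemma~\ref{lem gamma}. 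Until the single-scale decay for $\LL_0$-solutions is actually established, the proof is incomplete at its core.
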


The next theorem is an improvement of Theorem \ref{mt}, which says that we can have Carleson norms for $\beta_u$
that are as small as we want, provided that we take a small DKP constant and a suitably large ball where $u$ is a positive solution that vanishes on the boundary.

\begin{theorem}\label{mt2}
Let $x_0\in\Real^d$, $R>0$, $\mu_0>0$ be given, let $u$ satisfy the assumptions of Theorem \ref{mt}, and let $\A$ satisfy the weak DKP condition in $\Delta(x_0,R)$.
Then for $\tau \leq 1/2$  
\begin{equation}\label{1a15} 
\norm{\beta_u(x,r) \frac{dxdr}{r}}_{\mathcal{C}(\Delta(x_0, \tau R))} 
\leq C \tau^a + C \norm{\alpha_2(x,r)^2 \frac{dxdr}{r}}_{\mathcal{C}(\Delta(x_0,R))},
\end{equation}
where $C$ and $a > 0$ depends only on $d$, $n$ and $\mu_0$.
\end{theorem}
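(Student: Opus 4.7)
The plan is to reduce Theorem~\ref{mt2} to a single-scale excess-decay inequality and then iterate dyadically, treating $\alpha^2$ as a perturbative source term. Theorem~\ref{mt} already provides a scale-invariant baseline Carleson bound of size $O(1+M)$; the $\tau^a$ gain in \eqref{1a15} must come from a genuine improvement at each nested scale, available whenever $\A$ is close (in the weak DKP sense) to the model class $\cA_0(\mu_0)$.

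The core technical lemma I would establish is: there exist $\tau_0\in(0,1/2)$, $\theta\in(0,1)$ and $C>0$, depending only on $d,n,\mu_0$, such that
\[
\beta_u(x,\tau_0 r) \le \theta\,\beta_u(x,r) + C\,\alpha(x,r)^2
\]
for every ball $B(x,r)$ in which $u$ is an admissible nonnegative solution vanishing on $\Gamma$. The proof is by compactness and contradiction. Assuming failure, one extracts a sequence of rescaled solutions $v_k$ on $B(0,1)\setminus\Real^d$, bounded in the weighted energy space by a Caccioppoli-type inequality, for operators $\LL_k$ with $\alpha_k(0,1)\to 0$ and $\beta_{v_k}(0,1)\to 0$, while $\beta_{v_k}(0,\tau_0)$ remains bounded away from zero. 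Weighted De Giorgi--Nash--Moser estimates from Section~\ref{sec pre} yield interior H\"older regularity away from $\Gamma$ and weak convergence in $W_r$ to a limit $v_\infty$. The smallness $\alpha_k\to 0$ forces $\A_k\to\A_0$ in weighted $L^2$ on Whitney regions for some $\A_0\in\cA_0(\mu_0)$; together with uniform ellipticity this lets us pass to the limit in the equation, so $v_\infty$ solves $-\divg(\A_0\abs{t}^{d+1-n}\nabla v_\infty)=0$ on $B(0,1)\setminus\Real^d$ with vanishing trace on $\Gamma$. By the reduction in Section~\ref{sec cnnct}, any such solution has the form $c\abs{t}$, so $\beta_{v_\infty}(0,\tau_0)=0$, contradicting the construction.

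Iterating the decay lemma $j$ times at successive scales $\tau_0^i r$ yields
\[
\beta_u(x,\tau_0^j r) \le \theta^j\,\beta_u(x,r) + C\sum_{i=0}^{j-1}\theta^{j-1-i}\,\alpha(x,\tau_0^i r)^2,
\]
and setting $\tau=\tau_0^j$ gives a $\tau^a$ factor with $a=\log(1/\theta)/\log(1/\tau_0)>0$. Integrating over Carleson boxes $T_\Delta$ with $\Delta\subset\Delta(x_0,\tau R)$ and using Fubini, the first term contributes at most $C\tau^a(1+M)$ after applying Theorem~\ref{mt} at scale $R/2$, while the discrete convolution in the second term is absorbed into $\norm{\alpha^2\,dxdr/r}_{\mathcal{C}(\Delta(x_0,R))}$ by a standard Carleson packing argument, yielding \eqref{1a15}.

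The main obstacle is the compactness step. Two subtleties deserve care: ensuring that the vanishing trace of $v_k$ on $\Gamma$ persists in $v_\infty$ within the weighted Sobolev setting (handled by scale invariance of the weight $\abs{t}^{d+1-n}$ under the blow-up and continuity of the trace operator from Section~\ref{sec pre}), and passing to the limit in $\LL_k v_k=0$ when $\A_k$ converges only in weighted $L^2$ on Whitney regions (handled by uniform ellipticity together with weak $L^2_{\loc}$ convergence of $\nabla v_k$ against strong convergence of $\A_k$ on Whitney annuli separated from $\Gamma$). Once these are in place, the classification of zero-trace solutions for $\cA_0$-operators via the reduction of Section~\ref{sec cnnct} closes the loop.
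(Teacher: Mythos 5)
Your overall architecture (a single-scale excess-decay inequality, dyadic iteration giving $\tau^a=\theta^{j}$, then integration over Carleson boxes using Theorem \ref{mt} for the first term and Carleson packing for the source term) is exactly the paper's route: the paper proves the decay estimate \eqref{3a39}, $\beta_u(x,\tau_0R)\le\frac12\beta_u(x,R)+C\gamma(x,R)^2$, and then iterates as in \cite{DLMgreen}, Section 4.2. The iteration and integration steps of your proposal are fine (modulo replacing $\alpha(x,r)^2$ by $\gamma(x,r)^2$ of \eqref{def gamma}, whose Carleson norm is still controlled by $\cN(\A)$ via Lemma \ref{lem gamma}; a pointwise single-scale bound of the error by $\alpha(x,r)^2$ alone is not what the comparison argument produces).

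The genuine gap is in your proof of the decay lemma itself. First, negating the inequality $\beta_u(0,\tau_0)\le\theta\beta_u(0,1)+C\alpha(0,1)^2$ with $C=k\to\infty$ forces $\alpha_k(0,1)\to0$ (since $\beta\le1$ by \eqref{eq beta<1}), but it does \emph{not} force $\beta_{v_k}(0,1)\to0$; and $\beta$ is a ratio, invariant under scaling of $u$, so you cannot normalize it to zero. Second, and more seriously, your closing step asserts that a solution of $-\divg(\A_0\abs{t}^{d+1-n}\nabla v_\infty)=0$ on $B_1\setminus\Real^d$ with vanishing trace on $\Gamma\cap B_1$ must be $c\abs{t}$. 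That is false on a bounded ball: such solutions form an infinite-dimensional family parametrized by boundary data on $\partial B_1$. The Liouville-type classification $V_\infty=\alpha'\abs{t}$ is only valid for global positive solutions on all of $\Rn\setminus\Real^d$, and the paper obtains it only after a blow-up ($V_k(X)=r_k^{-1}v_k(r_kX)$) combined with the Lipschitz bound of Lemma \ref{lem Lip} and Corollary \ref{cor cmpsn} at infinity. What your compactness argument actually reduces to is the excess-decay property for $\LL_0$-solutions on the unit ball, i.e., the Key Lemma \ref{lem Ju0 decay} together with the energy lower bound of Lemma \ref{lem Eu0 lwbd} -- and that is the hard core of the paper, requiring the decomposition of $u$ into its spherical average $\wt u_\theta$ (handled by reduction to a constant-coefficient operator in $\R^{d+1}_+$) plus a rotational remainder (handled by the blow-up/Liouville argument just described). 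Your proposal assumes this content away. Finally, even with the correct limiting equation, concluding $\beta_{v_\infty}(0,\tau_0)\ge\epsilon_0$ from $\beta_{v_k}(0,\tau_0)\ge\epsilon_0$ needs strong $L^2(w)$ convergence of gradients at scale $\tau_0$, which weak convergence does not give; the paper sidesteps this by the quantitative comparison of Lemma \ref{lem comp u u0} and the reverse H\"older inequalities rather than soft compactness.
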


Finally, let us comment that our results are essentially optimal. In \cite{DLMgreen}, we constructed an example that shows that $\beta_{G_L^\infty}(x,r)\frac{dxdr}{r}$ may not be a Carleson measure if an operator $L=-\divg(\wt A\nabla)$  does not satisfy the DKP condition. Here, $G_L^\infty$ is the Green function with pole at infinity for $L$ on the upper half-plane $\Real^{2}_+$. Construct an operator $\LL=-\divg(\A\nabla)$ from the $2$- dimensional operator $L$ as in \eqref{A fmd+1}. One can show that this operator does not satify the DKP condition either. Moreover, the corresponding Green function is  $G_{\LL}^\infty(x,t)=G_{L}^\infty(x,\abs{t})$, and a similar computation as in the co-dimensional one setting shows that $\beta_{G_{\LL}^\infty}(x,r)\frac{dxdr}{r}$ cannot be a Carleson measure on $\Real^{2}_+$. 

The main differences in the proof, compared to the setting of co-dimension $1$,
lie in the decay estimates for the non-affine part of solutions to equations with a coefficient matrix in the class $\cA_0(\mu_0)$. In the co-dimension one case, we have good estimates for the second derivatives of solutions to equations with constant coefficients. This enables us to control the oscillations of the gradient of solutions. However, in the higher co-dimensional setting, the coefficients have a singular weight $\abs{t}^{-n+d+1}$, which prevents us from getting an estimate for the second derivatives of solutions. To overcome this difficulty, we split the solution into one part which is radial in $t$, and the other part which is purely rotational in $t$. The radial part can be treated similarly to the co-dimension one case, while the rotational part requires a compactness argument and other properties of solutions. The entire Section~\ref{sec Ju0} is devoted to implementing this idea. The decay estimate is proved in the key lemma (Lemma \ref{lem Ju0 decay}).

The rest of the paper is organized as follow. In Section~\ref{sec pre}, we collect some results that will be used frequently in the rest of the paper; most of them are proved in \cite{david2017elliptic}. In Section~\ref{sec cnnct}, we relate the $n$-dimensional operator $\LL_0$ back to a $d+1$- dimensional operator $L$, and transform solutions of $\LL_0u=0$ into solution of $Lv=0$. Also, we study the properties of $\lambda_{x,r}$ in that section.
In Section~\ref{sec L}, we show how to generalize the decay estimates from operators with a coefficient matrix in $\cA_0(\mu_0)$ to weak DKP operators. The ideas in that section are similar to those in the co-dimensional one case, and we mainly illustrate the modifications needed in the higher co-dimension. We give a proof of the reverse H\"older inequalities for 
the gradient of solutions, where we have to address the issue of mixed-dimensional boundaries.

\section{Preliminaries}\label{sec pre}

In this section we recall, mostly from \cite{david2017elliptic}, how to extend standard results for elliptic
PDE's in the upper half space (or NTA domains) to the setting of co-dimension $>1$. The familiar reader can probably jump to Section \ref{sec cnnct} and return to this section when needed.

Consider $\Omega=\Rn\setminus\Gamma$, where $\Gamma\subset\Rn$ is Ahlfors-regular 
of dimension $d<n-1$. In all the 
other sections, $\Gamma$ will be simply $\Real^d$. 
For $X\in\Omega$, 
write $\delta(X):= \dist(X,\Gamma)$. Define the 
weight function $w(X):=\delta(X)^{-n+d+1}$, and a measure $dm(X)=w(X)dX$. 
Denote by $B_r(X)$ the open 
ball in $\Rn$ centered at $X$ with radius $r$. One can show that 
\begin{align}
    m(B_r(X))\approx r^nw(X) \qquad\text{if } \delta(X)\ge 2r,\label{mBr far}\\
     m(B_r(X))\approx r^{d+1} \qquad\text{if } \delta(X)\le 2r.\label{mBr near}
\end{align}
In particular, this implies that that $m$ is a doubling measure. See \cite{david2017elliptic},  Chapter 2 for details.

Denote by $W = \dot{W}^{1,2}_w(\Omega)$ the weighted Sobolev
space of functions $f\in L^1_{loc}(\Omega)$ whose distributional gradient in $\Omega$ lies in $L^2(\Omega,w)$:
\begin{equation}\label{def W}
    W:= \set{f\in L^1_{loc}(\Omega): \nabla f \in L^2
(\Omega, w)}=\set{f\in L^1_{loc}(\Rn): \nabla f \in L^2
(\Rn, w)},
\end{equation} 
and set $\norm{f}_W =\br{\int_\Omega\abs{\nabla f(X)}^2w(X)dX}^{1/2}$ for $f\in W$. Here, the identity 
(i.e., the fact that the distribution derivative of $f$ on $\Omega$ can also be used as a derivative on $\R^n$)
is proved in \cite{david2017elliptic}, Lemma 3.2. We shall also use the following local version of the space $W$. Let $O\subset\Rn$ be an open set, then
\begin{equation}\label{def Wr}
    W_r(O):=\set{f\in L^1_{loc}(O): \vp f\in W \text{ for any }\vp\in C_0^\infty(O)}.
\end{equation}
Note that \(W_r(O)=\set{f\in L^1_{loc}(O): \nabla f\in L^2_{loc}(O,w)}\); 
see \cite{david2017elliptic} Chapter 8 for details.

For functions in $W$ or $W_r(O)$, it is shown in \cite{david2017elliptic} that there exists a well-defined trace on $\Gamma$, or $\Gamma\cap O$, respectively. The trace of $u\in W$ is such that for $\mathcal{H}^d$-almost every $x\in \Gamma$,
\begin{equation}\label{eq trace0}
    Tu(x)=\lim_{r\to0}\fint_{B(x,r)}u(Y)dY:=\lim_{r\to0}\frac{1}{\abs{B(x,r)}}\int_{B(x,r)}u(Y)dY.
\end{equation}
For $u\in W_r(O)$, the trace is defined in the same way for $\mathcal{H}^d$-almost every $x\in \Gamma\cap O$.

Consider the divergence-form operator $\LL=-\divg_X(\A(X)w(X)\nabla_X)$,
where $\A$ is  
an $n\times n$ matrix of real, bounded, measurable functions defined in $\Omega$, 
that satisfies 
the ellipticity conditions \eqref{cond ellp}. 

\begin{defn}\label{def wsol}
We say that $u\in W$ is a (weak) solution of $\LL u=0$ in $\Omega$ if for any $\vp\in C_0^\infty(\Omega)$,
\[
\int_{\Omega}\A\nabla u\cdot\nabla\vp\, dm=0.
\]
Let $O\subset\Rn$ be an open set. We say that $u\in W_r(O)$ is a (weak) solution of $\LL u=0$ in $O$ if for any $\vp\in C_0^\infty(O)$,
\(
\int_{O}\A\nabla u\cdot\nabla\vp dm=0
\).
We say that $u\in W_r(O)$ is a subsolution (respectively, supersolution) in $O$ if for any $\vp\in C_0^\infty(O)$ such that $\vp\ge 0$, \(
\int_{O}\A\nabla u\cdot\nabla\vp dm\le 0\) (respectively,\, $\ge 0$).

\end{defn}

We collect some basic properties for functions in $W$ and solutions of $\LL u=0$ in this section. The constant $C$ below might be different from line to line, but depends only on $d$, $n$, the Ahlfors constant $C_0$, and the ellipticity constant $\mu_0$ unless otherwise stated.

\begin{lem}[Poincar\'e Inequality (\cite{david2017elliptic}, Lemma 4.2)]\label{lem Poincare} Let $p\in [1,\frac{2n}{n-2}]$ (or $p\in [1, +\infty)$ if $n = 2$). Then for any $u\in W$, any ball $B\subset\Rn$ with radius $r > 0$, there is some constant $C$ depending only on $n$, $d$ and $C_0$, such that
\[
\br{\frac{1}{m(B)}\int_{B}\abs{u-u_{B}}^pdm}^{1/p}\le Cr\br{\frac{1}{m(B)}\int_{B}\abs{\nabla u}^2dm}^{1/2}
\]
where $u_B$ denotes either $\fint_Bu$ or $m(B)^{-1}\int_B udm$.
If $B$ is centered on $\Gamma$ and if, in addition, $Tu=0$  on $\Gamma\cap B$, then
\[
\br{\frac{1}{m(B)}\int_B\abs{u}^pdm}^{1/p}\le C r\br{\frac{1}{m(B)}\abs{\nabla u}^2dm}^{1/2}.
\]
\end{lem}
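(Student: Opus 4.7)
The plan is to reduce both estimates to the classical theory of weighted Sobolev-Poincar\'e inequalities for Muckenhoupt weights. The crucial preliminary step is to verify that $w(X) = \dist(X,\Gamma)^{d+1-n}$ lies in the Muckenhoupt class $A_2(\Rn)$ with constant depending only on $n$, $d$, and the Ahlfors constant $C_0$. For any ball $B \subset \Rn$, one estimates $\fint_B w$ and $\fint_B w^{-1}$ by splitting $B$ into annular layers $\set{X : 2^{-k-1}r \le \dist(X,\Gamma) \le 2^{-k}r}$ and using the Ahlfors $d$-regularity of $\Gamma$ to bound the volume of each layer. The hypothesis $d<n-1$ is precisely what places the exponent $d+1-n$ in the range $(-(n-d),0)$ needed for $A_2$; the two-sided measure estimates \eqref{mBr far}--\eqref{mBr near} and the doubling of $m$ drop out of the same computation.

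Granted $w \in A_2$, the first inequality in the case $p=2$ follows from the Fabes-Kenig-Serapioni weighted Poincar\'e theorem, or alternatively by a direct chaining argument: decompose $B \setminus \Gamma$ into Whitney balls $\set{B_j}$ on which $w$ is essentially constant, apply the classical unweighted Poincar\'e inequality on each $B_j$, and bridge the local averages $u_{B_j}$ through overlapping Whitney chains using the doubling of $m$ and the bounded overlap of the decomposition. The upgrade to exponents $p \in [1, 2n/(n-2)]$ then follows from the weighted Sobolev embedding on the doubling metric measure space $(\Rn, m)$; the Sobolev exponent is controlled by the upper doubling dimension of $m$, which is at most $n$.

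For the trace-vanishing inequality, the extra ingredient needed is control of the ball average $u_B$ itself. When $B$ is centered on $\Gamma$ and $Tu \equiv 0$ on $\Gamma \cap B$, pick a Lebesgue point $x \in \Gamma \cap B$ of $Tu$; by the trace formula \eqref{eq trace0}, $u_{B_\rho(x)} \to 0$ as $\rho \to 0$. A telescoping chain of balls from $B$ down to $x$, whose successive radii decrease geometrically and to each of which the first inequality is applied, yields
\[
\abs{u_B} \le Cr\br{\frac{1}{m(B)}\int_B \abs{\nabla u}^2 dm}^{1/2}.
\]
Combining with the first estimate via the triangle inequality finishes the argument. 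The main technical obstacle throughout is the careful handling of the weight's singularity across $\Gamma$ in the Whitney chaining step: the weight blows up on $\Gamma$, so Whitney chains must be built entirely in $\Omega$, and the fact that $m(\Gamma \cap B) = 0$ together with Ahlfors regularity is what allows the chains from neighboring Whitney regions to be glued across $\Gamma$ without loss. This is precisely what forces the $A_2$ verification rather than a mere doubling argument.
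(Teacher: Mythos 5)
This lemma is not proved in the present paper: it is quoted verbatim from \cite{david2017elliptic} (Lemma 4.2 there), whose proof rests on the layered/polar computations behind \eqref{mBr far}--\eqref{mBr near} together with a preliminary estimate (Lemma 4.1 of \cite{david2017elliptic}) for the unweighted average $\fint_B \abs{u}\,dX$ when $Tu=0$. The first half of your plan is sound and compatible with that route: the verification that $w=\delta^{d+1-n}\in A_2$ from Ahlfors regularity and $d<n-1$ is correct, the removability of $\Gamma$ needed for Whitney chaining is exactly \cite{david2017elliptic} Lemma 3.2 (already invoked in \eqref{def W}), and the upgrade to the full range $p\le \frac{2n}{n-2}$ does follow from the lower mass bound $m(B_\rho(X))\gtrsim (\rho/R)^n\, m(B_R(X))$ rather than from Fabes--Kenig--Serapioni alone, as you correctly indicate.

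The genuine gap is in the zero-trace estimate. Telescoping from $B=B_r(x_0)$ down to a \emph{single} point $x\in\Gamma\cap B$ where $u_{B_\rho(x)}\to 0$ does not close. Set $\rho_k=2^{-k}r$ and $E=m(B)^{-1}\int_B\abs{\nabla u}^2dm$; applying the first inequality on $B_{\rho_k}(x)$ gives
\[
\abs{u_{B_{\rho_k}(x)}-u_{B_{\rho_{k+1}}(x)}}\le C\rho_k\br{\frac{1}{m(B_{\rho_k}(x))}\int_{B_{\rho_k}(x)}\abs{\nabla u}^2dm}^{1/2}\le C\rho_k\br{\frac{m(B)}{m(B_{\rho_k}(x))}}^{1/2}E^{1/2},
\]
and since $m(B_{\rho_k}(x))\approx\rho_k^{d+1}$ by \eqref{mBr near}, the $k$-th term is of size $r^{(d+1)/2}\rho_k^{(1-d)/2}E^{1/2}$. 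The series $\sum_k\rho_k^{(1-d)/2}$ converges only when $d<1$; for $d\ge1$ the terms do not even tend to zero, so no bound on $\abs{u_B}$ results. The correct argument uses that $Tu$ vanishes $\mathcal{H}^d$-a.e.\ on $\Gamma\cap B$, not just at one point: average the telescoping over $x\in\Gamma\cap\frac12B$ against $\sigma=\mathcal{H}^d|_\Gamma$. By Jensen and Fubini, $\fint_{\Delta}\big(m(B_{\rho_k}(x))^{-1}\int_{B_{\rho_k}(x)}\abs{\nabla u}^2dm\big)\,d\sigma(x)\lesssim r^{-d}\rho_k^{-1}\int_{2B}\abs{\nabla u}^2dm$ (for fixed $Y$, the set of $x$ with $Y\in B_{\rho_k}(x)$ has $\sigma$-measure $\lesssim\rho_k^d$), so the $k$-th term becomes $\rho_k^{1/2}r^{-d/2}\big(\int_{2B}\abs{\nabla u}^2dm\big)^{1/2}$, which is summable and yields $\abs{u_B}\lesssim r E^{1/2}$ as needed. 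This averaged boundary estimate is exactly what Lemma 4.1 of \cite{david2017elliptic} supplies and what the Remark after the statement uses to derive \eqref{eq Poincare u=0 2-2+}; as written, your argument proves the second inequality only for $d=0$.
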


\begin{re}
One also has (see the proof of Lemma 4.2 in \cite{david2017elliptic}) 
\begin{equation}\label{eq Poincare u-uB 2-2+}
    \br{\frac{1}{m(B)}\int_{B}\abs{u-u_{B}}^2dm}^{1/2}\le Cr\br{\frac{1}{m(B)}\int_{B}\abs{\nabla u}^{\frac{2n}{n+2}}dm}^{\frac{n+2}{2n}}.
\end{equation}
Moreover, if $B$ is centered on $\Gamma$ and if, in addition, $Tu=0$  on $\Gamma\cap B$, then
\begin{equation}\label{eq Poincare u=0 2-2+}
     \br{\frac{1}{m(B)}\int_{B}\abs{u}^2dm}^{1/2}\le Cr\br{\frac{1}{m(B)}\int_{B}\abs{\nabla u}^{\frac{2n}{n+2}}dm}^{\frac{n+2}{2n}}.
\end{equation}
To see \eqref{eq Poincare u=0 2-2+}, write 
\[
\br{\frac{1}{m(B)}\int_{B}\abs{u}^2dm}^{1/2}\le C\br{\frac{1}{m(B)}\int_B\abs{u-\fint_Bu}^2dm}^{1/2}+C\fint_B\abs{u(X)}dX.
\]
By Lemma 4.1 of \cite{david2017elliptic} and H\"older's inequality,
\[
\fint_B\abs{u(X)}dX\le Cr\br{\frac{1}{r^{d+1}}\int_B\abs{\nabla u}^{\frac{2n}{n+2}}dm}^{\frac{n+2}{2n}}.
\]
Note that since $B$ is centered on $\Gamma$, $m(B)\approx r^{d+1}$. Thus, \eqref{eq Poincare u=0 2-2+} follows from the above observation and \eqref{eq Poincare u-uB 2-2+}.
\end{re}

\begin{lem}[Interior Caccioppoli inequality (\cite{david2017elliptic}, Lemma 8.6)]
Let $B$ be a ball of radius $r$ such that $2B\subset\Omega$ and $u\in W_r(2B)$ 
is a nonnegative subsolution of $\LL$
 in $2B$. 
Then there exists a constant $C>0$ depending only on $d,n$, $C_0$ and $\mu_0$, such that for any constant $c\in\Real$, 
\[
\int_B\abs{\nabla u}^2dm\le C r^{-2}\int_{\frac{3}{2}B}\abs{u-c}^2dm.
\]
\end{lem}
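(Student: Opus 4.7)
This is the classical Caccioppoli argument, adapted to the weighted setting with $dm=w\,dX$. The plan is to test the (sub)solution inequality against a test function of the form $\eta^2(u-c)$, where $\eta$ is a cutoff localising to $B$ inside $\tfrac{3}{2}B$, and then absorb a bad term using Cauchy--Schwarz and ellipticity.

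First I would fix a cutoff $\eta\in C_c^\infty(\tfrac{3}{2}B)$ with $0\le\eta\le 1$, $\eta\equiv1$ on $B$, and $|\nabla\eta|\le C/r$. Because $2B\subset\Omega$, the weight $w=\delta^{d+1-n}$ is smooth and bounded above and below on $\tfrac{3}{2}B$, so $\eta^2(u-c)\in W$ has compact support in $2B$ and, after a standard mollification, is an admissible test function in Definition~\ref{def wsol}. A small subtlety: if $u$ is only a subsolution, the formal test function must be nonnegative; one then runs the argument below with $\eta^2(u-c)^+$ in place of $\eta^2(u-c)$ (noting that constants solve $\LL$, so $u-c$ is still a subsolution with no zeroth order term), and complements it, if necessary, by testing against $\eta^2(c-u)^+$ using that $c-u$ is a supersolution. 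In either case, once the gradient estimate on the sets $\{u>c\}$ and $\{u<c\}$ is established one can add them to control $\int_B|\nabla u|^2\,dm$.

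Testing in the weak formulation and expanding the product rule gives
\[
\int \eta^2\,\A\nabla u\cdot\nabla u\,dm \;\le\; -2\int \eta(u-c)\,\A\nabla u\cdot\nabla\eta\,dm.
\]
Applying the ellipticity bound $\langle\A\xi,\xi\rangle\ge\mu_0^{-1}|\xi|^2$ on the left and $|\A\xi\cdot\zeta|\le\mu_0|\xi||\zeta|$ on the right, then Cauchy--Schwarz with a parameter $\varepsilon>0$:
\[
\mu_0^{-1}\!\int \eta^2|\nabla u|^2\,dm \;\le\; \varepsilon\!\int\eta^2|\nabla u|^2\,dm \;+\; \frac{\mu_0^{2}}{\varepsilon}\!\int |u-c|^2|\nabla\eta|^2\,dm.
\]
Choosing $\varepsilon=(2\mu_0)^{-1}$ to absorb the first term into the left-hand side, and using $\eta\equiv 1$ on $B$ together with $|\nabla\eta|\le C/r$ and $\operatorname{supp}\eta\subset\tfrac{3}{2}B$, yields the stated inequality.

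\textbf{Main obstacle.} The only nontrivial point is the handling of the subsolution hypothesis when $u-c$ changes sign, since the raw test function $\eta^2(u-c)$ is not nonnegative; this is handled by the two-step $(u-c)^\pm$ argument indicated above. Beyond that the proof is routine: the weight $w$ enters only as a doubling measure and the argument is a pointwise/integral manipulation that does not require any regularity of $w$ in the interior region $2B\subset\Omega$.
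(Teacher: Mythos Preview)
The paper does not give its own proof of this lemma; it is quoted from \cite{david2017elliptic}, Lemma~8.6. So there is nothing to compare against directly, and your plan is indeed the classical Caccioppoli argument, which is exactly what one expects. For a \emph{solution} $u$ it is completely correct: $u-c$ is again a solution, $\eta^2(u-c)$ is an admissible test function with no sign constraint, and the absorption step goes through verbatim.

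There is, however, a genuine gap in your handling of the \emph{subsolution} case with arbitrary $c\in\Real$. Testing the subsolution $u-c$ against $\eta^2(u-c)^+$ does give the desired upper bound for $\int_{B\cap\{u>c\}}|\nabla u|^2\,dm$. But the complementary step fails: if $v:=c-u$ is a supersolution and you test it against the nonnegative function $\eta^2 v^+$, the supersolution inequality reads
\[
\int \eta^2\,\A\nabla v\cdot\nabla v\,dm \;\ge\; -2\int \eta\,v^+\,\A\nabla v\cdot\nabla\eta\,dm,
\]
which is a \emph{lower} bound on the energy on $\{u<c\}$, not an upper bound. So the two pieces do not add up to the stated estimate, and your ``two-step $(u-c)^{\pm}$ argument'' does not close. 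The standard Caccioppoli inequality for a nonnegative subsolution yields only the case $c=0$ (or more generally $c\le 0$, since then $|u-c|\ge u$); getting the full statement ``for any $c\in\Real$'' from the subsolution hypothesis alone requires a different idea, or else the hypothesis should really be that $u$ is a solution. In this paper the lemma is only invoked for solutions (e.g.\ in the proof of Lemma~\ref{lem RH 2-2+}), so the distinction is harmless for the applications here, but you should flag that your argument as written proves the full statement only for solutions.
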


\begin{lem}[Caccioppoli inequality on the boundary (\cite{david2017elliptic} Lemma 8.11)]\label{lem bdyCaccio}
Let $B\subset\Rn$ be a ball of radius $r$ centered on $\Gamma$, and let $u\in W_r(2B)$ be a nonnegative subsolution in $2B\setminus\Gamma$ such that $Tu=0$  on $2B\cap\Gamma$. Then 
\[
\int_B\abs{\nabla u}^2dm\le C r^{-2}\int_{\frac3{2}B}u^2dm.
\]
\end{lem}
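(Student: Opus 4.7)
The plan is to run the standard Caccioppoli argument, with the usual cutoff, but with one subtlety specific to the present mixed-dimensional setting: I must justify that $\varphi^{2}u$ is an admissible test function even though its support may touch $\Gamma$. Fix a cutoff $\varphi\in C_{0}^{\infty}(\tfrac{3}{2}B)$ with $\varphi\equiv 1$ on $B$, $0\le\varphi\le 1$, and $\abs{\nabla\varphi}\le C/r$. Since $u\in W_{r}(2B)$ and $\varphi$ is smooth with compact support in $2B$, the product $\varphi^{2}u$ lies in $W$, with $\nabla(\varphi^{2}u)=2\varphi u\nabla\varphi+\varphi^{2}\nabla u$. Moreover, because the trace is linear and multiplicative by smooth cutoffs, and $Tu=0$ on $\Gamma\cap 2B$, we get $T(\varphi^{2}u)=0$ on all of $\Gamma$.

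The next step is to argue that such a function may be used as a test function against the subsolution equation on $2B\setminus\Gamma$. The point is that every $g\in W$ with $Tg=0$ on $\Gamma$ and with compact support away from $\bdy(2B)$ can be approximated in the $W$-norm by functions in $C_{0}^{\infty}(\Rn\setminus\Gamma)$; this is exactly the type of density/trace statement proved in \cite{david2017elliptic} and it is the ingredient that lets us drop the boundary term. Applying this to $\varphi^{2}u$ (truncating $u$ at level $k$ and letting $k\to\infty$ if one is cautious about integrability, then using monotone convergence) gives
\[
\int_{\Omega}\A\nabla u\cdot\nabla(\varphi^{2}u)\,dm \le 0,
\]
since $u\ge 0$ is a subsolution and the approximants can be chosen nonnegative.

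Expanding the gradient and using ellipticity \eqref{cond ellp} yields
\[
\mu_{0}^{-1}\int\varphi^{2}\abs{\nabla u}^{2}\,dm
\le \int \varphi^{2}\A\nabla u\cdot\nabla u\,dm
\le -2\int \varphi u\,\A\nabla u\cdot\nabla\varphi\,dm
\le 2\mu_{0}\int \varphi u\abs{\nabla u}\abs{\nabla\varphi}\,dm.
\]
A standard Young's inequality $2ab\le \eps a^{2}+\eps^{-1}b^{2}$ with $a=\varphi\abs{\nabla u}$ and $b=u\abs{\nabla\varphi}$, choosing $\eps$ small enough to absorb the $\varphi^{2}\abs{\nabla u}^{2}$ term on the left, gives
\[
\int\varphi^{2}\abs{\nabla u}^{2}\,dm \le C\int u^{2}\abs{\nabla\varphi}^{2}\,dm
\le \frac{C}{r^{2}}\int_{\frac{3}{2}B} u^{2}\,dm.
\]
Restricting the left integral to $B$, where $\varphi\equiv 1$, yields the stated inequality.

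The only nontrivial point, and thus the main obstacle, is the admissibility of $\varphi^{2}u$ as a test function: it does not have compact support in $2B\setminus\Gamma$, and one must rely on the vanishing trace hypothesis together with the density of $C_{0}^{\infty}(\Rn\setminus\Gamma)$ in the subspace of $W$-functions with zero trace on $\Gamma$. Once that approximation is in hand, the rest is the classical Caccioppoli calculation, identical in form to the interior version (\cite{david2017elliptic}, Lemma 8.6) but with the constant $c$ replaced by $0$ thanks to the boundary vanishing.
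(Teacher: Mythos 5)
Your proof is correct and follows essentially the same route as the argument in the cited reference \cite{david2017elliptic} (Lemma 8.11): the classical Caccioppoli computation with the test function $\varphi^2 u$, where the only genuine issue is admissibility, resolved exactly as you do via the density of $C_0^\infty$ functions supported away from $\Gamma$ in the zero-trace subspace of $W$. Note that the paper itself only cites this lemma without reproving it, so no comparison beyond this is needed.
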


\begin{lem}[Moser estimates on the boundary (\cite{david2017elliptic} Lemma 8.12)]\label{lem bdyMoser} Let $B$ and $u$ be as in Lemma \ref{lem bdyCaccio}. Then 
\[
\sup_B u\le C\br{m\br{B}^{-1}\int_{\frac3{2}B}u^2dm}^{1/2}.
\]
Here, the constant $C$ depends only on $d$, $n$ and $\mu_0$ as usual.
\end{lem}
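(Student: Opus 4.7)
\textbf{Proof plan for Lemma \ref{lem bdyMoser}.} This is a standard Moser iteration argument, tailored to the weighted setting by means of the two boundary tools that the authors have just stated: boundary Caccioppoli (Lemma \ref{lem bdyCaccio}) and the boundary Sobolev-Poincar\'e inequality (the second statement of Lemma \ref{lem Poincare}, with $p = 2n/(n-2)$). The iteration exponent will be $\chi := n/(n-2)$. Throughout, I exploit the two scaling relations \eqref{mBr near}, so that $m(B_r(x)) \approx r^{d+1}$ uniformly on all balls I will encounter.

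\emph{Step 1: Caccioppoli for powers of $u$.} Fix $\beta \geq 1$ and a cutoff $\eta \in C_0^\infty(2B)$. After first truncating $u$ at a height $M$ (and letting $M \to \infty$ at the end), the function $\varphi = \eta^2 u^{2\beta-1}$ lies in $W$ and has vanishing trace on $\Gamma$ (because $Tu = 0$ on $2B \cap \Gamma$ and $\eta$ is compactly supported in $2B$). Testing the subsolution inequality against $\varphi$, expanding $\nabla\varphi$, using ellipticity \eqref{cond ellp} and Young's inequality to absorb the cross term, yields
\begin{equation*}
\int \eta^2 \abs{\nabla u^\beta}^2 \, dm \le C \beta^2 \int \abs{\nabla \eta}^2 u^{2\beta} \, dm.
\end{equation*}
The case $\beta = 1$ is just Lemma \ref{lem bdyCaccio}.

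\emph{Step 2: Boundary Sobolev and reverse H\"older.} Apply the boundary Poincar\'e inequality of Lemma \ref{lem Poincare} with $p = 2\chi = 2n/(n-2)$ to the function $\eta u^\beta$, which has vanishing trace on $\Gamma$. Combined with $\abs{\nabla(\eta u^\beta)}^2 \le 2\eta^2 \abs{\nabla u^\beta}^2 + 2 \abs{\nabla\eta}^2 u^{2\beta}$ and Step 1, and noting that on balls centered on $\Gamma$ both sides are normalized by comparable values of $m$, this produces
\begin{equation*}
\bigg(\frac{1}{m(B')} \int_{B'} (\eta u^\beta)^{2\chi} \, dm \bigg)^{1/\chi} \le C\beta^2 r^2 \cdot \frac{1}{m(B')} \int_{B'} \abs{\nabla\eta}^2 u^{2\beta} \, dm
\end{equation*}
for any ball $B' \subset 2B$ containing the support of $\eta$.

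\emph{Step 3: Iteration.} Set $\beta_k := \chi^k$ and choose nested balls $B_k$ of radius $r_k := r(1 + 2^{-k-1})$, so $B_0 = \tfrac{3}{2} B$ and $B_k \downarrow B$. Take $\eta_k \in C_0^\infty(B_k)$ equal to $1$ on $B_{k+1}$ with $\abs{\nabla\eta_k} \le C 2^k/r$. Then Step 2 gives the recursion
\begin{equation*}
\Phi_{k+1} := \bigg(\frac{1}{m(B_{k+1})} \int_{B_{k+1}} u^{2\beta_{k+1}} \, dm\bigg)^{1/(2\beta_{k+1})} \le (C \chi^k 2^k)^{1/\beta_k} \Phi_k,
\end{equation*}
where the doubling property \eqref{mBr near} of $m$ absorbs the ratio $m(B_k)/m(B_{k+1}) \approx 1$. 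Since $\sum_{k \ge 0} k/\chi^k < \infty$, iterating yields
\begin{equation*}
\Phi_{k+1} \le C \Phi_0 = C \bigg(\frac{1}{m(\tfrac{3}{2} B)} \int_{\frac{3}{2} B} u^2 \, dm\bigg)^{1/2}
\end{equation*}
uniformly in $k$, with $C$ depending only on $d, n, \mu_0$. Sending $k \to \infty$, the left-hand side tends to $\esssup_B u$, and using $m(\tfrac{3}{2} B) \approx m(B)$ gives the conclusion.

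\emph{Main obstacle.} The delicate point is Step 1: one needs $\varphi = \eta^2 u^{2\beta-1}$ to be an admissible test function (i.e.\ in $W$ with vanishing trace on $\Gamma \cap 2B$) even though $u$ is a priori only in the local weighted space $W_r(2B)$ and may be unbounded. Truncating $u_M := \min(u, M)$, one notices that $u_M$ is still a nonnegative subsolution with vanishing trace, proves the estimate for $u_M$ with constants independent of $M$, and passes to the limit by monotone convergence. The rest of the argument is bookkeeping; it is crucial that the weighted Sobolev exponent is $2\chi = 2n/(n-2)$ even though the measure scales like $r^{d+1}$ on boundary-centered balls, as this is precisely what Lemma \ref{lem Poincare} delivers.
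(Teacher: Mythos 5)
The paper does not prove this lemma; it is quoted verbatim from \cite{david2017elliptic} (Lemma 8.12), and your Moser iteration is exactly the standard route taken there: Caccioppoli for powers, the weighted boundary Sobolev--Poincar\'e inequality with exponent $2n/(n-2)$ from Lemma \ref{lem Poincare}, and iteration on shrinking boundary balls using the doubling of $m$. Steps 1--3 are sound as a scheme, and you correctly identify that the only delicate points are the admissibility of the test functions (which rests on $Tu=0$ and a density argument extending the class of test functions beyond $C_0^\infty(2B\setminus\Gamma)$) and the a priori unboundedness of $u$.

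However, your proposed fix for the unboundedness is wrong as stated: $u_M:=\min(u,M)$ is \emph{not} in general a subsolution (the minimum of a subsolution and a constant fails to be a subsolution -- already in one dimension, $\min(x^2,1)$ is not convex; it is $\max$, not $\min$, that preserves subsolutions). The correct standard device is to truncate the \emph{power in the test function} rather than $u$ itself: test against $\varphi=\eta^2\, u\,\bigl(\min(u,M)\bigr)^{2\beta-2}$, which is bounded, nonnegative, lies in the admissible space, and yields the Caccioppoli inequality of your Step 1 for $\min(u,M)^{\beta}$ (up to the usual bookkeeping) with constants independent of $M$; one then lets $M\to\infty$ by monotone convergence. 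With that replacement your argument goes through and matches the proof in the cited reference.
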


Let $B$ be a ball centered on $\Gamma$ with radius $r$. 
We say that a point $X_B$ is a {\it corkscrew point} for 
$B$ if $X_B\in B$ and $\delta(X_B)\ge\epsilon r$ for some $\epsilon$ depending only on $d$, $n$ and the Ahlfors constant $C_0$ of $\Gamma$. 

\begin{lem}[Boundary Harnack's Inequality (\cite{david2017elliptic}, Lemma 11.8)]
Let $x_0\in\Gamma$ and $r > 0$ be given, and let $X_r$ be a corkscrew point for 
$B_r(x_0)$. Let $u\in W_r(B_{2r}(x_0))$ be a nonnegative solution of $\LL u=0$ in $B_{2r}(x_0)\setminus\Gamma$, such that $Tu=0$  on $B_{2r}(x_0)\cap\Gamma$. Then
\[u(X) \le Cu(X_r) \qquad \text{for } X \in B_r(x_0).\]
\end{lem}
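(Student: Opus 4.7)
The plan is to combine the boundary Moser estimate already established with a boundary oscillation decay (yielding H\"older regularity up to $\Gamma$), and then conclude via an interior Harnack chain to the corkscrew point $X_r$.

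The first and main step is to establish an oscillation decay at the boundary: for every $y\in\Gamma$ and every $\rho>0$ with $B_\rho(y)\subset B_{2r}(x_0)$,
\[
\sup_{B_{\rho/2}(y)} u \;\leq\; \theta\sup_{B_\rho(y)} u,
\]
for some uniform $\theta\in(0,1)$ depending only on $d$, $n$, $C_0$ and $\mu_0$. Writing $M:=\sup_{B_\rho(y)} u$, the function $v:=M-u$ is a nonnegative supersolution of $\LL$ in $B_\rho(y)\setminus\Gamma$ with $Tv=M$ on $\Gamma\cap B_\rho(y)$, so the decay is equivalent to a boundary weak Harnack $\inf_{B_{\rho/2}(y)}v\geq cM$. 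To obtain it, I would run the same Moser iteration scheme that underlies the boundary Moser estimate --- using the boundary Caccioppoli inequality and the Poincar\'e inequality with zero trace from this section --- applied to the negative powers $(v+\varepsilon)^{-q}$, and bridge positive and negative exponents via a John--Nirenberg argument on $\log(v+\varepsilon)$.

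Iterating this decay dyadically on boundary balls centered at the projection to $\Gamma$ of any point yields H\"older continuity at $\Gamma$: for $X \in B_{3r/2}(x_0)\cap\Omega$,
\[
u(X)\;\leq\; C\,\bigl(\delta(X)/r\bigr)^\alpha \sup_{B_{7r/4}(x_0)} u,\qquad \alpha=-\log_2\theta.
\]
Combined with the boundary Moser estimate on $B_r(x_0)$ (whose double lies in $B_{2r}(x_0)$), this shows that the supremum of $u$ over $\overline{B_r(x_0)}$ is attained at some point $X^\ast$ with $\delta(X^\ast)\geq c_0 r$, where $c_0$ depends only on the structural constants: a sup realized too close to $\Gamma$ would, via the H\"older decay, be strictly dominated by the Moser bound in terms of a weighted $L^2$ average on a slightly larger ball, and a dyadic scale iteration absorbs the ``near-$\Gamma$'' contribution.

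Finally, both $X^\ast$ and the corkscrew $X_r$ lie in $\{\delta(\cdot)\geq c_0 r\}\cap B_{2r}(x_0)$ and can therefore be joined by a chain of at most $N$ balls of radius $\sim c_0 r/10$, each with its double contained in $\Omega$, where $N$ depends only on $d$, $n$, $C_0$. Applying the interior Harnack inequality for $\LL$ (available on any ball well inside $\Omega$, where $\LL$ is uniformly elliptic up to a bounded weight factor) along the chain gives $u(X^\ast) \leq C^N u(X_r)$, which is the desired bound. The main obstacle is the first step: extending Moser iteration to a boundary weak Harnack for supersolutions in the presence of the degenerate weight $\delta^{-(n-d-1)}$ in codimension $n-d>1$. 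The boundary Caccioppoli and trace-zero Poincar\'e inequalities are structured precisely to accommodate this weight, and the measure $m$ is doubling, so the classical Moser scheme carries through; the delicate point is verifying the John--Nirenberg bridge for $\log v$ in the weighted setting, where dyadic Whitney cubes from $\Rn$ must be reconciled with the lower-dimensional trace on $\Gamma$.
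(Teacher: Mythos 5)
The paper does not prove this lemma; it is quoted verbatim from \cite{david2017elliptic} (Lemma 11.8), so there is no internal proof to compare against. Your architecture --- boundary oscillation decay via a weak Harnack for $M-u$, hence H\"older continuity of $u$ up to $\Gamma$, then a Harnack chain to the corkscrew point --- is exactly the standard route, and the first two ingredients are indeed available in the degenerate weighted setting (they are Lemmas 8.9 and 8.16 of \cite{david2017elliptic}; the weight $w=\delta^{d+1-n}$ is handled by the boundary Caccioppoli, the zero-trace Poincar\'e, and the doubling of $m$, as you anticipate).

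The genuine gap is in your third step. The H\"older decay bounds $u(X)$ for $X$ near $\Gamma$ by $C(\delta(X)/\rho)^\alpha\sup_{B_\rho(\hat X)}u$, i.e.\ by the supremum over a \emph{strictly larger} ball, and the Moser estimate only bounds that supremum by an $L^2$ average over a yet larger ball --- neither produces $u(X_r)$ on the right-hand side. Comparing $\sup_{B_r}u$ with $\sup_{B_{7r/4}}u$ is essentially the doubling property of the supremum, which is a consequence of the Carleson estimate you are trying to prove; so the claim that the supremum over $\overline{B_r}$ is attained at distance $\gtrsim r$ from $\Gamma$ cannot be extracted from H\"older decay plus Moser alone, and the ``$L^2$ average absorbs the near-$\Gamma$ contribution'' step does not close (the near-$\Gamma$ contribution is controlled by data on larger balls, not by $\sup_{B_r}u$). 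The missing ingredient is the a priori Harnack-chain bound $u(X)\le (Cr/\delta(X))^{N}u(X_r)$ for all $X$ in, say, $B_{7r/4}(x_0)$, which converts the ``far from $\Gamma$'' part of each ball into a multiple of $u(X_r)$. With it you can either (a) run the Caffarelli--Fabes--Mortola--Salsa point iteration: if $u(X_1)\ge Ku(X_r)$ then $\delta(X_1)\le CK^{-1/N}r$, and the H\"older decay at the intermediate scale $\rho_1=(\delta(X_1)r)^{1/2}$ produces $X_2$ with $u(X_2)\ge K^{1+\beta}u(X_r)$ and $|X_2-X_1|\le CK^{-1/(2N)}r$, so that the chain stays in $B_{3r/2}$ and contradicts local boundedness for $K$ large; or (b) prove, for $r\le s<s'\le 7r/4$, the inequality $\Phi(s)\le \tfrac12\Phi(s')+C\bigl(r/(s'-s)\bigr)^{N'}u(X_r)$ with $\Phi(s)=\sup_{B_s}u$ (H\"older decay on $\{\delta<c_0(s'-s)\}$, Harnack chains on the complement) and conclude by the standard absorption lemma for such families. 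Either repair is routine, but one of them must be supplied; as written the step would fail.
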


\begin{lem}\label{lem corkscrew}
Let $x_0\in\Gamma$ and $R>0$ be given. Suppose $u\in W_r(B_R(x_0))$ is a 
nonnegative solution of $\LL u=0$ in $B_{R}(x_0)\setminus\Gamma$ with $Tu=0$  on $B_R(x_0)\cap\Gamma$. Then for all $0<r<R/2$,
\[
\frac{u^2(X_r)}{r^2}\approx\frac{1}{m(B_r(x_0))}\int_{B_{r}(x_0)}\abs{\nabla u}^2dm,
\]
where $X_r$ is a corkscrew point of $B_r(x_0)$.
\end{lem}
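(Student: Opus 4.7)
The plan is to prove the two directions of the claimed equivalence separately. Throughout, write $B_\rho := B_\rho(x_0)$. We use repeatedly that $\LL$ is uniformly elliptic on any ball disjoint from $\Gamma$ (the weight $w$ is bounded above and below there), so the classical interior Harnack and Moser estimates apply on such balls. In particular, any two points of $\Omega$ lying within distance $\lesssim r$ of each other and both at distance $\gtrsim r$ from $\Gamma$ can be joined by a Harnack chain of bounded length, so the values of $u$ at such points are comparable; in particular $u(X_{3r/2}) \approx u(X_r)$.

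\emph{Upper bound on the gradient.} Apply the boundary Caccioppoli inequality (Lemma~\ref{lem bdyCaccio}) on $B_r$, which is legitimate since $2r<R$, to obtain
\[
\int_{B_r}|\nabla u|^2\,dm \le \frac{C}{r^2}\int_{B_{3r/2}} u^2\,dm.
\]
Next apply the boundary Harnack inequality at scale $3r/2$ (after a mild shrinking of $r$ by a constant factor to ensure $3r < R$, which is handled by a routine rescaling argument in the borderline case $R/3 \le r < R/2$): $u(X) \le C u(X_{3r/2})$ on $B_{3r/2}$. By the Harnack-chain observation $u(X_{3r/2}) \approx u(X_r)$, so
\[
\int_{B_{3r/2}} u^2\,dm \le Cm(B_{3r/2})\,u(X_r)^2 \le Cm(B_r)\,u(X_r)^2
\]
by doubling of $m$. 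Dividing by $m(B_r)$ completes this direction.

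\emph{Lower bound on the gradient.} Using $Tu = 0$ on $\Gamma\cap B_r$, the boundary Poincar\'e inequality (Lemma~\ref{lem Poincare}) gives
\[
\frac{1}{m(B_r)}\int_{B_r} u^2\,dm \le C r^2 \cdot \frac{1}{m(B_r)}\int_{B_r}|\nabla u|^2\,dm.
\]
On the other hand, because $\delta(X_r)\approx r$, the ball $B_{\epsilon r/2}(X_r)$ sits inside $B_r\cap\Omega$ and $w$ is comparable to a positive constant there; so the classical interior $L^2$--to--sup Moser estimate applied to the nonnegative solution $u$ of the (locally) uniformly elliptic operator $\LL$ yields
\[
u(X_r)^2 \le \frac{C}{m(B_{\epsilon r/2}(X_r))}\int_{B_{\epsilon r/2}(X_r)} u^2\,dm \le \frac{C}{m(B_r)}\int_{B_r} u^2\,dm,
\]
where \eqref{mBr far} and \eqref{mBr near} are used to see $m(B_{\epsilon r/2}(X_r))\approx r^{d+1}\approx m(B_r)$. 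Combining the two displays gives the desired estimate.

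The only (very minor) technical obstacle is the bookkeeping mismatch between the radii required by the boundary Caccioppoli/Harnack inequalities on the one hand and the precise hypothesis $r<R/2$ on the other; this is handled by the routine rescaling argument indicated above. All other steps are direct applications of the tools already collected in this section, together with the fact that $\LL$ degenerates only near $\Gamma$.
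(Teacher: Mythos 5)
Your proof is correct and follows essentially the same route as the paper: boundary Caccioppoli plus boundary Harnack for the upper bound on the energy, and a Moser-type pointwise bound at $X_r$ combined with the boundary Poincar\'e inequality (using $Tu=0$) for the lower bound. The only cosmetic difference is that you invoke the interior Moser estimate on a small ball around the corkscrew point where $\LL$ is uniformly elliptic, whereas the paper uses the boundary Moser estimate (Lemma \ref{lem bdyMoser}) directly on $B_r$; both the paper and you handle the borderline radius bookkeeping in the same routine way.
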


\begin{proof}
By translation invariance, we may assume that the origin is on $\Gamma$ and that $x_0$ is the origin. 
To see the less than or equal to direction, we say that 
$Tu=0$ on the boundary and use Lemma \ref{lem bdyMoser} followed by Sobolev's inequality to get \[
\frac{u^2(X_r)}{r^2}\le \frac{Cr^{-2}}{m(B_r)}\int_{B_r}u^2dm\le \frac{C}{m(B_r)}\int_{B_r}\abs{\nabla u}^2dm.
\]
To see the other direction, we use the boundary Caccioppoli and boundary Harnack inequalities to get
\[
\frac{1}{m(B_{2r})}\int_{B_r}\abs{\nabla u}^2dm\le \frac{C}{r^2}\frac{1}{m(B_{2r})}\int_{B_{2r}}\abs{u}^2dm\le\frac{C}{r^2}u^2(X_r).
\]

\end{proof}

\begin{lem}[Comparison Principle (\cite{david2017elliptic}, Theorem 11.17)]\label{lem cmpsn}
Let $x_0\in\Gamma$ and $r > 0$, and let $X_r$ be a corkscrew point. Let $u, v\in W_r(B_{2r}(x_0))$ be two
nonnegative, not identically zero, solutions of $\LL u = \LL v = 0$ in $B_{2r}(x_0)\setminus\Gamma$, such that $Tu=Tv=0$  on $\Gamma\cap B_{2r}(x_0)$. Then
\[
C^{-1}\frac{u(X_r)}{v(X_r)}\le \frac{u(X)}{v(X)}\le C\frac{u(X_r)}{v(X_r)} \qquad\text{for all }\, X\in B_{r}(x_0)\setminus\Gamma,
\]
where $C > 1$ depends only on $n, d$, $C_0$ and $\mu_0$.
\end{lem}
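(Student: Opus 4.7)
The plan is to adapt the Caffarelli--Fabes--Mortola--Salsa argument to the mixed-dimensional setting of \cite{david2017elliptic}. By scaling and translation we take $x_0=0$, $r=1$, and after multiplying $u$ by a scalar we normalize $u(X_1)=v(X_1)=1$ at a corkscrew $X_1$ for $B_1$. The content of the lemma is the bound $C^{-1}\le u(X)/v(X)\le C$ for $X\in B_{1/2}\setminus\Gamma$, and by symmetry it suffices to prove one side, say $u(X)\le C\,v(X)$. Note that this does \emph{not} reduce to a pointwise lower bound on $v$ alone, since $v$ itself vanishes on $\Gamma$; the proof must match the rates at which $u$ and $v$ go to zero at the boundary.

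The heart of the argument is a two-sided quantitative non-degeneracy at boundary scales. I would show that there exist $c_1,C_1,\beta>0$ depending only on $n,d,C_0,\mu_0$ such that, for any non-negative solution $w$ of $\LL w=0$ on $B_2\setminus\Gamma$ with $Tw=0$ on $\Gamma\cap B_2$ and $w(X_1)=1$, for every $y\in\Gamma\cap B_{1/2}$, every $\rho\in(0,1/4]$, and every corkscrew $Y_\rho$ for $B_\rho(y)$,
\[
c_1\,\rho^{\beta}\;\le\;w(Y_\rho)\;\le\;C_1\,\rho^{\beta}.
\]
The upper bound comes from iterating the boundary Moser estimate (Lemma \ref{lem bdyMoser}) and the boundary Caccioppoli inequality (Lemma \ref{lem bdyCaccio}) across dyadic scales $\rho=2^{-k}$, using Lemma \ref{lem corkscrew} to recast $L^2$ averages in terms of corkscrew values; the combination yields a H\"older-type decay $\sup_{B_{\rho/2}(y)}w\le\theta\sup_{B_\rho(y)}w$ with $\theta<1$. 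The matching lower bound of the same polynomial order is a Hopf-type estimate: construct a weak subsolution $\phi_y$ of $\LL$ that vanishes on $\Gamma\cap B_\rho(y)$ and is bounded below on a Whitney annulus at distance $\rho/2$ from $y$, and then apply the maximum principle on $B_\rho(y)\setminus\Gamma$ to $w-\eta\phi_y$, with $\eta$ calibrated by the upper bound at scale $\rho$. In the flat model $\Gamma=\R^d$, an explicit barrier of the form $|t|^{\gamma}\chi(x)$ with $\gamma\in(0,1)$ and a smooth cutoff $\chi$ is a subsolution for any $\A_0\in\cA_0(\mu_0)$, and a perturbation argument covers general $\LL$ at small enough scales.

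Applying the two-sided estimate to both $u$ and $v$ gives $u(Y_\rho)/v(Y_\rho)\approx 1$ uniformly in $y$ and $\rho$. To propagate the comparison to an arbitrary $X\in B_{1/2}\setminus\Gamma$, I would pick $y\in\Gamma$ with $|X-y|=\delta(X)$, set $\rho=2\delta(X)$, and connect $X$ to $Y_\rho$ inside $B_{3\rho/2}(y)\setminus\Gamma$ by a Harnack chain of balls staying at distance $\gtrsim\delta(X)$ from $\Gamma$; the interior Harnack inequality, applied to $u$ and $v$ separately along the chain, transports the ratio bound from $Y_\rho$ to $X$. Such a chain exists because $\Gamma=\R^d$ has codimension $\ge 2$, so $\Omega$ is a uniform (in fact NTA) domain. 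The main obstacle I anticipate is the Hopf-type lower barrier, because the weight $|t|^{d+1-n}$ is singular on $\Gamma$ and the codimension-one barriers do not transfer mechanically. A clean alternative, closer in spirit to \cite{david2017elliptic}, is a compactness/normal-families argument: if $w$ decayed strictly faster than the expected polynomial rate at some boundary point, a suitable rescaled blow-up limit would be a non-trivial global solution vanishing identically on $\Gamma$, which contradicts the energy estimates supplied by the weighted Poincar\'e inequality (Lemma \ref{lem Poincare}) together with Lemma \ref{lem corkscrew}.
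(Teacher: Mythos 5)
First, a point of reference: the paper does not prove this lemma at all --- it is quoted verbatim from \cite{david2017elliptic}, Theorem 11.17, so your proposal should be measured against the CFMS-type argument given there, which runs through the harmonic measure and the Green function constructed in that monograph.

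Your argument has a genuine gap at what you call its heart. The two-sided estimate $c_1\rho^{\beta}\le w(Y_\rho)\le C_1\rho^{\beta}$ with a \emph{single} exponent $\beta=\beta(n,d,C_0,\mu_0)$ valid for every nonnegative solution of every elliptic $\LL$ is false: it would assert that every such solution is globally comparable to $\dist(\cdot,\Gamma)^{\beta}$, a statement strictly stronger than the comparison principle itself and incompatible with the counterexample discussed in the introduction (an operator of the form \eqref{A fmd+1} built from a bad codimension-one matrix has a Green function whose decay rate at the boundary is not a fixed power, uniformly in the point and the scale). The two mechanisms you invoke do not produce matching exponents: iterating Lemmas \ref{lem bdyMoser} and \ref{lem bdyCaccio} gives an upper bound $w(Y_\rho)\lesssim\rho^{\alpha}$ with a small De Giorgi--Nash--Moser exponent $\alpha$, while a Hopf barrier of the form $|t|^{\gamma}\chi(x)$ gives a lower bound with an exponent $\gamma$ that is in general strictly larger than $\alpha$; the proposed ``perturbation argument'' cannot close the gap because a general bounded measurable $\A$ is \emph{not} close to any $\A_0\in\cA_0(\mu_0)$ at small scales --- that is exactly what the weak DKP condition would buy, and Lemma \ref{lem cmpsn} assumes no such structure. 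With mismatched exponents the ratio $u(Y_\rho)/v(Y_\rho)$ is only bounded by $\rho^{\alpha-\gamma}$, which degenerates as $\rho\to0$; matching the decay rates of two different solutions at every boundary point and scale is precisely the content of the lemma, so this step is circular. The closing compactness alternative does not repair it: with no structural hypothesis on $\A$ there is no useful convergence of rescaled operators, and a qualitative blow-up contradiction cannot produce the uniform constant $C(n,d,C_0,\mu_0)$. The correct route (and the one in \cite{david2017elliptic}) compares each of $u$, $v$ to a third, solution-independent object: one proves $c\,u(X_r)\,\omega^{X}(\Delta_r)\le u(X)\le C\,u(X_r)\,\omega^{X}(\Delta_{2r})$ using the maximum principle, the boundary Harnack inequality (\cite{david2017elliptic}, Lemma 11.8), the H\"older decay at the boundary, and the doubling of harmonic measure, and then divides the two sandwiches. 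Your Harnack-chain reduction to corkscrew points at the end is fine --- codimension at least two does make $\Omega$ uniformly connected --- but that reduction is the easy part of the proof.
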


\begin{cor}[\cite{david3square}, Corollary 6.4]\label{cor cmpsn}
Let $u$, $v$, $r$, $x_0$ be as in Lemma \ref{lem cmpsn}. There exists $C>0$ and $\gamma\in(0,1)$ depending only on $n, d$, $C_0$ and $\mu_0$, such that 
\[
\abs{\frac{u(X)v(Y)}{u(Y)v(X)}-1}\le C\br{\frac{\rho}{r}}^\gamma
\]
for all $X$, $Y\in B_\rho(x_0)\setminus\Gamma$, as long as $\rho<r/4$.
\end{cor}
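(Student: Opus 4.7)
The plan is to derive a H\"older-type oscillation decay for the ratio $f := u/v$ via an iterated application of Lemma \ref{lem cmpsn}, and then convert this decay into the claimed estimate. The target inequality rewrites as
\[
\frac{u(X)v(Y)}{u(Y)v(X)} - 1 = \frac{f(X) - f(Y)}{f(Y)},
\]
and Lemma \ref{lem cmpsn} applied on $B_r(x_0)$ already yields $f(Y) \ge C^{-1} f(X_r) > 0$ throughout $B_\rho \subset B_r$. So it suffices to prove the oscillation bound
\[
\omega_\rho := \sup_{B_\rho \setminus \Gamma} f - \inf_{B_\rho \setminus \Gamma} f \le C(\rho/r)^\gamma f(X_r).
\]

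The heart of the argument is to show that $\omega_\rho \le \lambda\, \omega_{2\rho}$ for some universal $\lambda \in (0,1)$, whenever $2\rho \le r/2$. Write $M := M_{2\rho}$, $m := m_{2\rho}$, $\omega := \omega_{2\rho}$, and let $q := f(X_\rho)$ with $X_\rho$ a corkscrew point of $B_\rho$. By linearity of $\LL$, both $u - m v$ and $Mv - u$ are nonnegative solutions on $B_{2\rho}(x_0) \setminus \Gamma$ with vanishing trace on $\Gamma \cap B_{2\rho}(x_0)$. Applying Lemma \ref{lem cmpsn} with scale parameter $\rho$, once to $(u - mv, v)$ and once to $(Mv - u, v)$, yields for every $X \in B_\rho \setminus \Gamma$,
\[
f(X) - m \ge C_0^{-1}(q - m), \qquad M - f(X) \ge C_0^{-1}(M - q),
\]
where $C_0$ is the constant of Lemma \ref{lem cmpsn}. (If one of the numerators vanishes identically, $f$ is constant on $B_{2\rho}$ and the decay is trivial.) Taking the relevant inf and sup gives $m_\rho \ge m + C_0^{-1}(q - m)$ and $M_\rho \le M - C_0^{-1}(M - q)$. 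Since $q \in [m, M]$, at least one of $q - m$ and $M - q$ is $\ge \omega/2$; combining the corresponding improvement with the trivial $M_\rho \le M$ or $m_\rho \ge m$ gives $\omega_\rho \le (1 - (2C_0)^{-1})\,\omega$, proving the claim with $\lambda := 1 - (2C_0)^{-1}$.

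Iterating the contraction from the initial scale $r/4$ gives $\omega_{r/2^{k+2}} \le \lambda^k \omega_{r/4}$ for $k \ge 0$. A further application of Lemma \ref{lem cmpsn} (first on $B_{r/4}(x_0)$, then transferring the corkscrew value via the lemma on $B_r(x_0)$) bounds $\omega_{r/4} \le M_{r/4} \le C' f(X_r)$. For arbitrary $\rho < r/4$, choosing $k$ with $r/2^{k+2} \le \rho < r/2^{k+1}$ yields $\omega_\rho \le C''(\rho/r)^\gamma f(X_r)$ with $\gamma := -\log_2 \lambda > 0$. Dividing by $f(Y) \ge C^{-1} f(X_r)$ then gives the stated bound.

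The only subtle point I expect is the dichotomy that produces the contraction factor $\lambda < 1$: a direct use of Lemma \ref{lem cmpsn} on $B_\rho$ (in the form $\sup f / \inf f \le C_0^2$) would only yield $\omega_\rho \le (C_0 - C_0^{-1})\,\omega_{2\rho}$, which need not be a contraction when $C_0$ is large. Splitting into cases based on which half of $[m, M]$ the corkscrew value $q$ sits in is what upgrades the Harnack-type bound of Lemma \ref{lem cmpsn} to genuine H\"older decay. No other ingredient requires special care, since the tools needed (comparison principle, corkscrew points, Harnack transfer) are already available in the degenerate weighted setting from Section \ref{sec pre}.
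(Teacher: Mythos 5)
Your argument is correct: the paper does not prove this corollary but cites it from \cite{david3square}, and your oscillation-decay iteration (applying the comparison principle to $u-mv$ and $Mv-u$ and using the dichotomy on the corkscrew value to extract a contraction factor) is exactly the standard proof given there. No gaps.
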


We have the following reverse H\"older inequality for the gradient of solutions.
\begin{lem}\label{lem RH}
Let $B\subset\Rn$ be a ball centered on $\Gamma$. Let $u\in W_r(3B)$ be a solution of $\LL u=0$ in $3B\setminus\Gamma$ with $Tu=0$  on $3B\cap\Gamma$. Then there exist $p>2$ depending only on $d,n$, $C_0$ and $\mu_0$, and $C>0$ depending on $d,n$, $C_0$, $\mu_0$ and $p$, such that 
\begin{equation}\label{eq RH largerball}
    \br{\frac{1}{m(B)}\int_B\abs{\nabla u}^pdm}^{1/p}\le C\br{\frac{1}{m(2B)}\int_{2B}\abs{\nabla u}^2dm}^{1/2}.
\end{equation}
If in addition, $u\ge0$ in $3B$, then 
\begin{equation}\label{eq RH sameball}
   \br{\frac{1}{m(B)}\int_B\abs{\nabla u}^pdm}^{1/p}\le C\br{\frac{1}{m(B)}\int_{B}\abs{\nabla u}^2dm}^{1/2}. 
\end{equation}
\end{lem}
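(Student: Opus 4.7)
My plan is the classical Gehring self-improvement strategy: establish a weak reverse Hölder inequality on every ball that sits inside $3B$, then upgrade integrability via Gehring's lemma in the doubling metric measure space $(\R^n, m)$. The main issue is that a generic sub-ball $B' \subset 3B$ may or may not hit $\Gamma$, so the Caccioppoli/Poincaré step splits into two cases.

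First, for every ball $B' = B_{r'}(X')$ with $2B' \subset 3B$ I would like to prove
\[
\fint_{B'} |\nabla u|^2\, dm \le C\left(\fint_{C_1 B'} |\nabla u|^q\, dm\right)^{2/q}, \qquad q = \tfrac{2n}{n+2} < 2,
\]
for some universal dilation $C_1$. If $\delta(X') \ge 4 r'$, then $2B' \cap \Gamma = \emptyset$, the weight $w$ is essentially constant on a fixed dilate of $B'$, and the standard chain of interior Caccioppoli on $u - u_{(3/2)B'}$ followed by the Sobolev-Poincaré inequality \eqref{eq Poincare u-uB 2-2+} gives the claim. If $\delta(X') < 4 r'$, I pick $x' \in \Gamma$ with $|X' - x'| = \delta(X')$ and replace $B'$ by the boundary-centered ball $B'' := B_{5 r'}(x')$; then $B' \subset B''$, doubling of $m$ makes $m(B'') \approx m(B')$, and I apply the boundary Caccioppoli (Lemma \ref{lem bdyCaccio}, used on $|u|$, which is a nonnegative subsolution with vanishing trace when $Tu = 0$) followed by the zero-trace Sobolev-Poincaré \eqref{eq Poincare u=0 2-2+} on $B''$. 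Transferring back to $B'$ yields the weak reverse Hölder in the boundary case with a slightly larger constant.

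Second, since $m$ is a doubling Borel measure on $\R^n$, the standard version of Gehring's lemma for doubling metric measure spaces applied to $g := |\nabla u|$ produces some $p > 2$ and a constant so that for every ball with $2B' \subset 3B$,
\[
\left(\fint_{B'} |\nabla u|^p\, dm\right)^{1/p} \le C \left(\fint_{2B'} |\nabla u|^2\, dm\right)^{1/2}.
\]
Specializing $B' = B$ proves \eqref{eq RH largerball}.

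Finally, for \eqref{eq RH sameball} in the nonnegative case, I apply Lemma \ref{lem corkscrew} at the two radii $r$ and $2r$ (with $r$ the radius of $B$) together with the boundary Harnack inequality, which gives $u(X_r) \approx u(X_{2r})$ at comparable scales; hence
\[
\fint_{2B} |\nabla u|^2\, dm \approx \frac{u(X_{2r})^2}{(2r)^2} \approx \frac{u(X_r)^2}{r^2} \approx \fint_{B} |\nabla u|^2\, dm,
\]
and combining this with \eqref{eq RH largerball} gives \eqref{eq RH sameball}. The main obstacle I expect is the mixed-dimensional boundary: the weight $\delta^{d+1-n}$ is singular on $\Gamma$ and the boundary Poincaré with vanishing trace requires a ball centered on $\Gamma$, so one has to introduce the auxiliary boundary-centered ball $B''$ in the second case and verify that doubling of $m$ (which behaves differently at scales above and below $\delta(X')$, cf.\ \eqref{mBr far}-\eqref{mBr near}) lets all dilation constants line up uniformly.
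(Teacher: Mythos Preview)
Your proposal is correct and follows essentially the same route as the paper: the paper first isolates the weak reverse H\"older inequality (its Lemma~\ref{lem RH 2-2+}) by the same two-case split near/far from $\Gamma$, using boundary Caccioppoli plus \eqref{eq Poincare u=0 2-2+} in the near case and interior Caccioppoli plus \eqref{eq Poincare u-uB 2-2+} in the far case, then invokes the Giaquinta version of Gehring's lemma with the doubling measure $m$, and finally deduces \eqref{eq RH sameball} from Lemma~\ref{lem corkscrew} and Harnack exactly as you do. The only cosmetic difference is that the paper's threshold is $\delta(X)\le \tfrac54 r$ rather than your $\delta(X')<4r'$, and it applies the boundary Caccioppoli directly rather than passing to $|u|$; both variants work.
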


To prove Lemma \ref{lem RH}, we first derive the following inequality
\begin{lem}\label{lem RH 2-2+}
Let $X\in\Rn$ and $r>0$ be given.  Let $u\in W_r(B_{4r}(X))$ be a solution of $\LL u=0$ in $B_{4r}(X)\setminus\Gamma$, with $Tu=0$ on $B_{4r}(X)\cap\Gamma$ if $B_{4r}(X)\cap\Gamma$ is not empty. Then 
\begin{equation}\label{eq RH 2-2+}
    \br{\frac{1}{m(B_r(X))}\int_{B_r(X)}\abs{\nabla u}^2dm}^{1/2}\le C\br{\frac{1}{m(B_{3r}(X))}\int_{B_{3r}(X)}\abs{\nabla u}^{\frac{2n}{n+2}}dm}^{\frac{n+2}{2n}}.
\end{equation}
\end{lem}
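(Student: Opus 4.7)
The heart of the argument is to compose two inequalities that are already available: a Caccioppoli bound trading $r\|\nabla u\|_{L^2(B)}$ for $\|u-c\|_{L^2(\tfrac{3}{2}B)}$ (with $c$ either $u_{(3/2)B}$ in the interior version, Lemma on interior Caccioppoli, or $0$ in the zero-trace version, Lemma \ref{lem bdyCaccio}), together with the improved Poincar\'e--Sobolev bounds \eqref{eq Poincare u-uB 2-2+} and \eqref{eq Poincare u=0 2-2+} from the Remark, which trade $\|u-c\|_{L^2(\tfrac{3}{2}B)}$ for $r\|\nabla u\|_{L^{2n/(n+2)}(\tfrac{3}{2}B)}$. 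Composing the two cancels the factor $r$ and, after normalizing averages by $m(\cdot)$ and invoking doubling of $m$, produces exactly the dimensionless reverse-H\"older inequality \eqref{eq RH 2-2+}.

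Write $\delta:=\dist(X,\Gamma)$. If $\delta\ge 2r$, then $2B_r(X)\subset\Omega$ and the interior Caccioppoli applied to $B=B_r(X)$ with $c=u_{B_{3r/2}(X)}$, followed by \eqref{eq Poincare u-uB 2-2+} on $B_{3r/2}(X)\subset B_{3r}(X)$, gives \eqref{eq RH 2-2+} directly. If $\delta\le \tfrac34 r$, I pick $x_0\in\Gamma$ with $|X-x_0|=\delta$ and a cutoff $\varphi$ supported in $B_{3r/2}(X)$ with $\varphi\equiv 1$ on $B_r(X)$; since $Tu=0$ on $\Gamma\cap B_{3r/2}(X)\subset\Gamma\cap B_{4r}(X)$, the test function $u\varphi^2$ is admissible, and the usual test-and-absorb argument yields $\int_{B_r(X)}|\nabla u|^2\,dm\le Cr^{-2}\int_{B_{3r/2}(X)}u^2\,dm$. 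Next I apply \eqref{eq Poincare u=0 2-2+} on the $\Gamma$-centered ball $B_{9r/4}(x_0)\supset B_{3r/2}(X)$, which is contained in $B_{3r}(X)$ precisely because $\delta\le \tfrac34 r$. Doubling of $m$ bridges the slight mismatch between the balls and gives \eqref{eq RH 2-2+}.

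The main technical obstacle is the intermediate regime $\tfrac34 r<\delta<2r$: there the shifted $\Gamma$-centered ball $B_{9r/4}(x_0)$ no longer fits inside $B_{3r}(X)$, while $2B_r(X)$ still fails to lie in $\Omega$, so no single application of the two-step argument closes with $B_{3r}(X)$ on the right. I would handle it by a Whitney-type cover of $B_r(X)$ using interior balls $B_{\rho_i}(Y_i)$ with $\rho_i\le\delta(Y_i)/10$ in the part of $B_r(X)$ bounded away from $\Gamma$, and $\Gamma$-centered balls $B_{\sigma_j}(y_j)$ with $\tfrac{3}{2}B^j\subset B_{4r}(X)$ near $\Gamma$; the local two-step inequality established above applies on each $B^i$, and summing with bounded overlap of the enlarged family together with doubling of $m$ preserves the Sobolev gain $\tfrac{2n}{n+2}<2$ and delivers \eqref{eq RH 2-2+}, possibly first with $B_{3r}(X)$ replaced by some larger but comparable $B_{Kr}(X)$, and then tightened back by doubling. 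The only real bookkeeping is checking that the enlarged Whitney family has bounded overlap uniformly in $(X,r)$, which is standard since $\Gamma$ is Ahlfors-regular and $m$ is doubling.
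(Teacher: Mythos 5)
Your interior case ($\delta(X)\ge 2r$) and boundary case ($\delta(X)\le \tfrac34 r$) are correct and match the paper's strategy (Caccioppoli composed with the Sobolev--Poincar\'e inequalities \eqref{eq Poincare u-uB 2-2+}, \eqref{eq Poincare u=0 2-2+}). The problem is the intermediate regime, where your proposed fix does not work and is also not needed.

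The Whitney-covering step is a genuine gap. A weak reverse H\"older inequality of the form
$\fint_{B^i}|\nabla u|^2dm\le C\big(\fint_{2B^i}|\nabla u|^q dm\big)^{2/q}$ with $q=\tfrac{2n}{n+2}<2$ does not survive summation over a bounded-overlap cover: writing $m_i=m(B^i)$, $a_i=\int_{2B^i}|\nabla u|^qdm$ and $\theta=2/q>1$, the local bounds give $\int_{B_r(X)}|\nabla u|^2dm\lesssim\sum_i m_i^{1-\theta}a_i^{\theta}$, and by Cauchy--Schwarz (convexity of the $1$-homogeneous function $(m,a)\mapsto m^{1-\theta}a^{\theta}$) one has $\sum_i m_i^{1-\theta}a_i^{\theta}\ \ge\ \big(\sum_i m_i\big)^{1-\theta}\big(\sum_i a_i\big)^{\theta}$, which is the \emph{reverse} of what you need to reassemble $m(B_r(X))^{1-\theta}\big(\int_{B_{3r}(X)}|\nabla u|^qdm\big)^{\theta}$. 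In other words, a normalized $L^2$ average over a tiny Whitney piece is allowed to be much larger than the normalized $L^q$ average over the big ball even when every local inequality holds, so "summing with bounded overlap" cannot close. (This non-localizability is exactly why Gehring-type arguments prove the pointwise-in-$(X,r)$ estimate first and only then self-improve.)

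The intermediate regime is handled in the paper by a single dichotomy at $\delta(X)=\tfrac54 r$, with no gap. For $\delta(X)\le\tfrac54 r$, enlarge $B_r(X)$ to a $\Gamma$-centered ball $B_{9r/4}(x_0)\supset B_r(X)$ and run the boundary Caccioppoli plus \eqref{eq Poincare u=0 2-2+} there; all balls used stay inside $B_{4r}(X)$, where the hypotheses hold. For $\delta(X)>\tfrac54 r$, one does \emph{not} need $2B_r(X)\subset\Omega$: the standard cutoff proof of Caccioppoli works between $B_r(X)$ and $B_{9r/8}(X)$ as soon as $B_{5r/4}(X)\subset\Omega$, and then \eqref{eq Poincare u-uB 2-2+} on $B_{5r/4}(X)$ finishes. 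Your worry about the exact radius $3r$ is the one legitimate point: in the boundary case with $\delta(X)$ close to $\tfrac54 r$ the final $\Gamma$-centered ball sits in $B_{15r/4}(X)$ rather than $B_{3r}(X)$ (the paper's parenthetical claim $B_{5r/2}(x_0)\subset B_{3r}(X)$ requires $\delta(X)\le r/2$). But the remedy is simply to accept a slightly larger outer ball (say $B_{4r}(X)$, adjusting the hypothesis ball accordingly), which is harmless for the Gehring/Giaquinta self-improvement used in Lemma \ref{lem RH}; it is not a reason to decompose $B_r(X)$.
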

\begin{proof}
Case 1: $\delta(X)\le\frac{5}{4}r$. Then there exists $x_0\in\Gamma$ so that $B_r(X)\subset B_{\frac{9}{4}r}(x_0)$. Hence, by Caccioppoli's inequality on the boundary and \eqref{eq Poincare u=0 2-2+}, 
\begin{align*}
    \br{\frac{1}{m(B_r(X))}\int_{B_r(X)}\abs{\nabla u}^2dm}^{1/2}&\lesssim \br{\frac{1}{m(B_{9r/4}(x_0))}\int_{B_{9r/4}(x_0)}\abs{\nabla u}^2dm}^{1/2}\\
    &\lesssim \br{\frac{1}{m(B_{5r/2}(x_0))}\int_{B_{5r/2}(x_0)}\abs{\nabla u}^{\frac{2n}{n+2}}dm}^{\frac{n+2}{2n}}.
\end{align*}
Then \eqref{eq RH 2-2+} follows from the fact that  $B_{5r/2}(x_0)\subset B_{3r}(X)$.

Case 2: $\delta(X)>\frac{5}{4}r$. Then $B_{5r/4}(X)\subset\Rn\setminus\Gamma$. By the 
interior Caccioppoli inequality and the 
Poincar\'e inequality \eqref{eq Poincare u-uB 2-2+},
\begin{align*}
    \bbr{\frac{1}{m(B_r(X))}\int_{B_r(X)}\abs{\nabla u}^2dm}^{\frac12}
    &\lesssim \frac{1}{r}\bbr{\frac{1}{m(B_{\frac{5r}{4}}(X))}\int_{B_{\frac{5r}{4}}(X)}\abs{u-u_{B_{5r/4}(X)}}^2dm}^{\frac12}\\
    &\lesssim\bbr{\frac{1}{m(B_{\frac{5r}{4}}(X))}\int_{B_{\frac{5r}{4}}(X)}\abs{\nabla u}^{\frac{2n}{n+2}}dm}^{\frac{n+2}{2n}}.
\end{align*}
\end{proof}

\begin{proof}[Sketch of proof of Lemma \ref{lem RH}]
One can deduce 
\eqref{eq RH largerball} in Lemma \ref{lem RH} from Lemma \ref{lem RH 2-2+} 
and a modification of the argument in \cite{giaquinta1983multiple} (Theorem 1.2, Chapter V). Thanks to the fact that $m$ is a doubling measure, the argument in \cite{giaquinta1983multiple} carries over. The only modification is that one should choose parameters everywhere in the argument in \cite{giaquinta1983multiple} according to the doubling constant of $m$ instead of that of Lebesgue measure in $\Rn$. Once we obtain \eqref{eq RH largerball} and assume additionally $u$ is an nonnegative solution, \eqref{eq RH sameball} follows immediately from Lemma \ref{lem corkscrew} and Harnack's inequality.
\end{proof}

\section{Connection with the co-dimensional one case: an analogue of constant-coefficient operators}\label{sec cnnct}

 From now on, we focus only on $\Omega=\Rn\setminus\Gamma$ with $\Gamma=\set{(x,t)\in\Rn: t=0}\cong\Real^d$. Notice that in this setting, for a point $(x,t)\in\Rn$, its distance to $\Gamma$ 
 is simply $\abs{t}$. Therefore, we can simply define the weight function $w$ as a function in $\Real^{n-d}$. 
 That is, for $t\in\Real^{n-d}$, define
\[
w(t):=\abs{t}^{-n+d+1}.
\]
Recall that $B_r(X)$ denotes the 
ball in $\Rn$ with radius $r$ centered at $X\in\Rn$. For $x\in\Real^d$, we write $B_r(x):=B_r(x,0)$, the 
ball in $\Rn$ with radius $r$ centered at $(x,0)\in\Rn$. 
Recall also that for a set $E$ in $\Rn$, $m(E)=\int_E w(t)\, dxdt$. As the following computation shows, for a ball in $\Rn$ centered on $\Gamma$, its $m$ measure is equal to the Lebesgue measure of a Carleson ball in $\Real^{d+1}$ multiplied by the surface area of the unit $(n-d-1)$-dimensional sphere:
\begin{align}
    m(B_r(x_0))&=\int_{B_r(x_0)}w(t)\, dxdt =
  \int_{B_r(x_0)} \abs{t}^{-n+d+1} \, dxdt    \nonumber\\
    &=\int_{\abs{x-x_0}\le r}\int_{\rho=0}^{\sqrt{r^2-\abs{x-x_0}^2}}
    \int_{\omega \in S^{n-d-1}}d\omega d\rho dx
    \nonumber\\
    &= \abs{T(x_0,r)}\sigma(S^{n-d-1})=c_{n,d}r^{d+1}.\label{eq mBandT}
\end{align}

Let $\LL=-\divg_{x,t}(\mathcal{A}(x,t)w(t)\nabla_{x,t})$ be an operator defined in $\Rn\setminus\Gamma$, where $\A(x,t)=\begin{bmatrix}
a_{ij}(x,t)
\end{bmatrix}$ is an $n\times n$ matrix of real-valued, measurable functions on $\Rn$, which satisfies the ellipticity conditions \eqref{cond ellp}. We shall systematically use $\A_0$ to denote an $n\times n$ matrix in the class $\cA_0(\mu_0)$,  and write  $\LL_0=-\divg_{x,t}(\mathcal{A}_0w(t)\nabla_{x,t})$.

The main benefit of taking $\A_0$ in this particular form is that the solutions of $\LL_0 u=0$ can be converted to solutions of a constant-coefficient equation in $\Real^{d+1}$. Let us introduce the $(d+1)$- dimensional constant-coefficient elliptic operator
\begin{equation}\label{def L0}
    L_0:=-\divg_{x,\rho}(\wt A\nabla_{x,\rho}), \qquad\text{with }
    \wt A=\begin{bmatrix}
\begin{BMAT}{c.c}{c.c}
A_0 & \mathbf{b_0} \\
\mathbf{c_0} & d_0
\end{BMAT}
\end{bmatrix},
\end{equation}
where $A_0$, $\mathbf{b_0}$, $\mathbf{c_0}$ and $d_0$ are the same as in \eqref{eq A0 def}.  Alternatively, we can write 
\begin{equation}\label{def' L0}
    L_0=-\divg_x (A_0\nabla_x)-\divg_x(\mathbf{b_0}\partial_\rho )- \partial_\rho(\mathbf{c_0}\nabla_x)-d_0\partial^2_\rho.
    \end{equation}
To relate solutions of $\LL_0 u=0$ to those of $L_0 v=0$, let us first give some definitions.

\begin{defn}\label{def utheta}
Let $f=f(x,t)$ be a function defined on $\Rn$. Write $t=\rho\,\omega$ in polar coordinates, with $\rho\in\Real_+$ and $\omega\in S^{n-d-1}$. We still denote the function in polar coordinates as $f$, that is, $f(x,t)$ and $f(x,\rho\,\omega)$ are the same function in different coordinates. For any $(x,\rho)\in \Real^{d+1}_+$, define
\begin{equation}\label{eqdef utheta}
    f_\theta(x,\rho):=\fint_{S^{n-d-1}}f(x,\rho\,\omega)d\omega.
\end{equation}
For any $(x,t)\in\Rn$, define
\begin{equation}\label{eqdef wtutheta}
    \wt{f}_\theta(x,t):=f_\theta(x,\abs{t})=\fint_{S^{n-d-1}}f(x,\abs{t}\omega)d\omega.
\end{equation}
In particular, $\wt{f}_\theta$ is a function of $n$ variables and is radial in $t$.
\end{defn}

\begin{lem}\label{lem spcs utheta} With the definitions above, the following statements hold:
\begin{enumerate}
    \item If $u\in W$, then $u_\theta\in L_{loc}^1(\Real^{d+1}_+)$, $\nabla u_\theta\in L^2(\Real^{d+1}_+)$, and $\wt{u}_\theta\in W$. 
    \item Let $x_0\in \Gamma$ and $r>0$. If $u\in W_r(B_r(x_0))$, then $$u_\theta\in W^{1,2}_{loc}(T(x_0,r))=\set{f\in L_{loc}^2(T(x_0,r)):\nabla f\in L_{\loc}^2(T(x_0,r))},$$
    and $\wt{u}_\theta\in W_r(B_r(x_0))$.
\end{enumerate}

\end{lem}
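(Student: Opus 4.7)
The plan is to reduce everything to polar coordinates in the $t$-variable. Writing $t=\rho\omega$ with $\rho=|t|$ and $\omega\in S^{n-d-1}$, one has $dt=\rho^{n-d-1}\,d\rho\,d\omega$, which exactly cancels the weight $w(t)=\rho^{-(n-d-1)}$. Hence for any nonnegative measurable $g$,
\[
\int_{\Real^{n-d}}g(t)\,w(t)\,dt \;=\; \int_0^\infty\!\!\int_{S^{n-d-1}}g(\rho\omega)\,d\omega\,d\rho,
\]
so $w$-weighted integrals on $\Rn$ correspond to unweighted ones on $\Real^{d+1}_+\times S^{n-d-1}$. Everything then reduces to Jensen's inequality and Fubini.

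For part (1), I would first take $u\in C^\infty(\Omega)\cap W$ and differentiate under the integral in \eqref{eqdef utheta} to obtain
\[
\partial_{x_i}u_\theta(x,\rho)=\fint_{S^{n-d-1}}(\partial_{x_i}u)(x,\rho\omega)\,d\omega,\quad
\partial_\rho u_\theta(x,\rho)=\fint_{S^{n-d-1}}\nabla_t u(x,\rho\omega)\cdot\omega\,d\omega.
\]
Jensen's inequality yields the pointwise bound $|\nabla_{x,\rho}u_\theta(x,\rho)|^2\le\fint_{S^{n-d-1}}|\nabla u(x,\rho\omega)|^2\,d\omega$; integrating in $(x,\rho)$ and applying the polar identity gives $\|\nabla u_\theta\|_{L^2(\Real^{d+1}_+)}^2\le|S^{n-d-1}|^{-1}\|u\|_W^2$, while a simpler variant (without the gradient) proves $u_\theta\in L^1_{loc}(\Real^{d+1}_+)$. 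For $\widetilde{u}_\theta(x,t)=u_\theta(x,|t|)$ the chain rule gives $|\nabla\widetilde{u}_\theta(x,t)|^2=|\nabla_{x,\rho}u_\theta(x,|t|)|^2$, and running the polar identity in reverse produces $\|\widetilde{u}_\theta\|_W^2=|S^{n-d-1}|\,\|\nabla u_\theta\|_{L^2(\Real^{d+1}_+)}^2\le\|u\|_W^2$. For general $u\in W$, I would approximate by mollification in the $t$-variable: the weight is $A_2$ on $\Real^{n-d}$, so standard smoothing produces $u_k\in C^\infty(\Omega)\cap W$ converging locally in $W$, and spherical averaging commutes with this limit so the bounds and the identification of the distributional derivatives persist.

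For part (2), I would localize. Given a compact $K\subset T(x_0,r)$, its preimage $K^*=\{(x,t):(x,|t|)\in K\}$ is a compact subset of $B_r(x_0)\setminus\Gamma$; choose a cutoff $\psi(x,t)=\psi_1(x)\psi_2(|t|)$, radial in $t$, smooth, compactly supported in $B_r(x_0)$, and equal to $1$ on a neighborhood of $K^*$. Then $\psi u\in W$ by the definition of $W_r$, and radiality of $\psi$ gives $(\psi u)_\theta(x,\rho)=\psi_1(x)\psi_2(\rho)\,u_\theta(x,\rho)$ and $\widetilde{(\psi u)_\theta}=\psi\,\widetilde{u}_\theta$. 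Applying (1) to $\psi u$ and exhausting $T(x_0,r)$ by such $K$ delivers the local memberships of $u_\theta$ and $\widetilde{u}_\theta$.

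The main technical obstacle I anticipate is rigorously identifying the distributional derivative of $u_\theta$ on $\Real^{d+1}_+$ when $u$ is only in $W$. Besides the mollification argument sketched above, one can handle it directly: for any $\varphi\in C_c^\infty(\Real^{d+1}_+)$, the function $\varphi(x,|t|)$ is smooth on $\Omega$ and compactly supported away from $\Gamma$, hence admissible as a test function for $\nabla u$, and unwinding Fubini identifies the weak derivative of $u_\theta$ as the spherical average on the nose.
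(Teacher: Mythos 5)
Your proposal follows essentially the same route as the paper: polar coordinates cancel the weight $w$, Jensen's inequality gives the pointwise bound $|\nabla_{x,\rho}u_\theta|^2\le\fint_{S^{n-d-1}}|\nabla u|^2d\omega$, and Fubini converts weighted integrals on $\Rn$ into unweighted ones on $\Real^{d+1}_+$; your extra care in identifying the distributional derivative of $u_\theta$ (by mollification or by testing against $\varphi(x,|t|)$) is a welcome elaboration of a point the paper treats implicitly, not a different method. The one step you gloss over is that part (2) asserts $u_\theta\in L^2_{loc}(T(x_0,r))$, whereas your localization via part (1) only yields $u_\theta\in L^1_{loc}$ with $\nabla u_\theta\in L^2_{loc}$; as the paper points out, one must still invoke the usual (unweighted) Poincar\'e inequality on the upper half-space to upgrade this to $L^2_{loc}$, so you should add that sentence to close part (2).
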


\begin{proof}
(1) We first show that $u_\theta\in L_{loc}^1(\Real^{d+1}_+)$. Let $K$ be a compact set in $\Real^{d+1}_+$. Then we can find $x_0\in\Real^d$, $r>0$ and $\epsilon>0$ so that $K\subset \set{(x,\rho)\in T(x_0,r): \rho\ge \epsilon}$. By translation invariance, we can assume that $x_0$ is the origin. Then we have 
\begin{multline*}
    \int_K\abs{u_\theta(x,\rho)}d\rho dx\le \epsilon^{-n+d+1}\int_{\abs{x}\le r}\int_\epsilon^{\sqrt{r^2-\abs{x}^2}}\abs{u_\theta(x,\rho)}\rho^{n-d-1}d\rho dx\\
    \le C_{\epsilon,n,d}\int_{\abs{x}\le r}\int_\epsilon^{\sqrt{r^2-\abs{x}^2}}\int_{S^{n-d-1}}\abs{u(x,\rho\omega)}\rho^{n-d-1}d\omega d\rho dx\\
    = C_{\epsilon,n,d}\int_{B_r}\abs{u(x,t)}dxdt<\infty,
\end{multline*}
where we have used $u\in L^1_{loc}(\Rn)$ to get the finiteness of the last term. This shows $u_\theta\in L_{loc}^1(\Real^{d+1}_+)$. 

Now we compute the $L^2$ integral of $\abs{\nabla u_\theta}$ over a Carleson ball $T_r$ centered at the origin. Observe that by the definition of $u_\theta$ and $\wt{u}_\theta$, expressing the gradient in polar coordinates, we have 
\(
\abs{\nabla_{x,\rho}u_\theta(x,\rho)}=\abs{\nabla_{x,t}\wt u_\theta(x,t)}\), for  $\rho=\abs{t}$.
Hence,
\begin{equation}\label{eq dr1}
     \abs{\nabla_{x,\rho}u_\theta(x,\rho)}^2=\abs{\nabla_{x,t}\fint_{S^{n-d-1}}u(x,\abs{t}\omega)d\omega}^2\le \fint_{S^{n-d-1}}\abs{\nabla_{x,t}u(x,\abs{t}\omega)}^2d\omega.
\end{equation}
Let $s=\abs{t}\omega$, then $\abs{s}=\abs{t}$, and \(\frac{\partial s_k}{\partial t_j}=\frac{t_j}{\abs{t}}\omega_k\), for \(k,j=1,2,\dots, n-d\). Thus,
\begin{equation*}
  \abs{\nabla_{t}u(x,\abs{t}\omega)}^2=\sum_{j=1}^{n-d}\br{\partial_{t_j}u(x,s)}^2=\sum_{j=1}^{n-d}\br{\sum_{k=1}^{n-d}\partial_{s_k}u(x,s)\frac{t_j}{\abs{t}}\omega_k}^2
    \le\abs{\nabla_su(x,s)}^2.  
\end{equation*}
Combining this with \eqref{eq dr1}, we obtain
\[
\abs{\nabla_{x,\rho}u_\theta(x,\rho)}^2\le\frac{1}{\sigma(S^{n-d-1})}\int_{\rho S^{n-d-1}}\abs{\nabla_{x,s}u(x,s)}^2\rho^{-n+d+1}ds.
\]
Therefore, integrating in polar coordinates, we can control the $L^2$ integral of $\abs{\nabla u_\theta}$ as follows.
\begin{multline}\label{eq W1utheta}
    \int_{T_r}\abs{\nabla_{x,\rho}u_\theta(x,\rho)}^2dxd\rho
    \le C_{n,d}\int_{\abs{x}\le r}\int_{\rho=0}^{\sqrt{r^2-\abs{x}^2}}\int_{\rho S^{n-d-1}}\abs{\nabla_{x,s}u(x,s)}^2\frac{dsd\rho dx}{\rho^{n-d-1}}\\
    =C_{n,d}\int_{\abs{x}\le r}\int_{\abs{t}\le\sqrt{r^2-\abs{x}^2}}\abs{\nabla_{x,t}u(x,t)}^2\frac{dt dx}{\abs{t}^{n-d-1}}\\
    = C_{n,d}\int_{B_r}\abs{\nabla_{x,t} u(x,t)}^2w(t)\, dxdt\le C_{n,d}\norm{\nabla u}_{L^2(\Rn,w)}^2.
\end{multline}
Letting $r$ go to infinity we obtain $\nabla u_\theta\in L^2(\Real^{d+1})$ with 
\[\norm{\nabla u_\theta}_{L^2(\Real^{d+1}_+)}\le C_{n,d}\norm{\nabla u}_{L^2(\Rn,w)}.\]
As for $\wt{u}_\theta$, let us fix any $r>0$ and evaluate the integral of $\wt u_\theta$ over the ball $B_r$. 
\begin{multline*}
    \int_{B_r}\abs{\wt u_\theta(x,t)}^2dxdt=\int_{\abs{x}\le r}\int_0^{\sqrt{r^2-\abs{x}^2}}\int_{S^{n-d-1}}\abs{u_\theta(x,\rho)}\rho^{n-d-1}d\omega d\rho dx\\
    \le \int_{\abs{x}\le r}\int_0^{\sqrt{r^2-\abs{x}^2}}\int_{S^{n-d-1}}\abs{u(x,\rho\omega')}d\omega'\rho^{n-d-1} d\rho dx=\int_{B_r}\abs{u(x,t)}dxdt.
\end{multline*}
This shows that $\wt{u}_\theta\in L^1_{loc}(\Rn)$. Finally, we compute 
\begin{multline*}
    \int_{B_r}\abs{\nabla_{x,t}\wt u_\theta(x,t)}^2w(t)\, dxdt=\int_{\abs{x}\le r}\int_0^{\sqrt{r^2-\abs{x}^2}}\int_{S^{n-d-1}}\abs{\nabla_{x,\rho}u_\theta(x,\rho)}^2d\omega d\rho dx\\
    =\sigma(S^{n-d-1})\int_{T_r}\abs{\nabla_{x,\rho}u_\theta(x,\rho)}^2d\rho dx.
\end{multline*}
This and \eqref{eq W1utheta} give $\nabla \wt u_\theta\in L^2(\Rn,w)$. Thus, $\wt u_\theta\in W$.

(2) By translation and dilation invariance, we can assume that $x_0$ is the origin and $r=1$. By a similar argument as in (1), one sees that if $u\in W_r(B_1)$, then $u_\theta\in L_{loc}^1(T_1)$, 
$\nabla u_\theta\in L_{loc}^2(T_1)$, and $\wt u_\theta\in W_r(B_1)$. 
So it remains to show $u_\theta\in L^2_{loc}(T_1)$. 
Notice however that $u_\theta$ lives in the upper half space, where there is no disturbing weight $w$;
then we can apply the usual Poincar\'e estimate, in the homogeneous space of locally integrable functions
$f$ such that $\nabla f\in L^2$, and indeed get that $u_\theta\in L^2_{loc}(T_1)$.
This gives 
$u_\theta\in W_{loc}^{1,2}(T_1)$.
\end{proof}

Now we can show how solutions of equations in $\Real^{d+1}_+$ and $\Rn\setminus\Real^d$ are related.

\begin{lem}\label{lem utheta sol of L0}
Let $B$ be a ball centered on $\Gamma$. If $u\in W_r(B)$ is a solution of $\LL_0u =0$ in $B\setminus\Gamma$, where $\A_0$ is in block form \eqref{eq A0 def}, then $u_\theta\in W_{loc}^{1,2}(T)$ is a solution of $L_0u_\theta=0$ in $T$, and $\wt{u}_{\theta}\in W_r(B)$ is a solution of $\LL_0\wt{u}_{\theta}=0$ in $B\setminus\Gamma$.
\end{lem}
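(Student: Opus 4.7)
The regularity conclusions $u_\theta \in W_{loc}^{1,2}(T)$ and $\wt u_\theta \in W_r(B)$ are already supplied by Lemma \ref{lem spcs utheta}, so the task is entirely to verify the two weak-formulation identities. I will handle the two statements in order; the second one will follow from the first, combined with a computation that is essentially dual.

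\emph{Step 1: $L_0 u_\theta = 0$ in $T$.} Fix a test function $\vp \in C_0^\infty(T)$. Since $\supp \vp$ is a compact subset of $T$, it sits in a set where $\rho \geq c > 0$, so the lift
\[
\wvp(x,t) := \vp(x,|t|)
\]
is smooth and compactly supported in $B \sm \Gamma$, and therefore is a legitimate test function for $\LL_0 u = 0$. I plug $\wvp$ into the weak formulation
\[
\int_{B \sm \Gamma} \A_0 \nabla_{x,t} u \cdot \nabla_{x,t} \wvp \, dm = 0
\]
and expand using the block form \eqref{eq A0 def}. Writing $\d_\rho^t u := \frac{t}{|t|} \cdot \nabla_t u$ for the radial $t$-derivative, the block structure turns $\A_0 \nabla u \cdot \nabla \wvp$ into
\[
A_0 \nabla_x u \cdot \nabla_x \vp \;+\; \mathbf{b_0} \d_\rho^t u \cdot \nabla_x \vp \;+\; (\mathbf{c_0} \cdot \nabla_x u)\, \d_\rho \vp \;+\; d_0\, \d_\rho^t u \, \d_\rho \vp,
\]
all evaluated with $\vp$ and its derivatives at $(x,|t|)$. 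Switching to polar coordinates $t = \rho \omega$, the Jacobian $\rho^{n-d-1}$ exactly cancels the weight $w(t)$, and the $\omega$-integrals collapse onto spherical averages. Since $\vp$ and its $(x,\rho)$-derivatives are $\omega$-independent, Fubini produces the factor $\sigma(S^{n-d-1})$ and the identifications
\[
\fint_{S^{n-d-1}} \nabla_x u(x,\rho\omega)\, d\omega = \nabla_x u_\theta(x,\rho), \qquad \fint_{S^{n-d-1}} \d_\rho^t u(x,\rho\omega)\, d\omega = \d_\rho u_\theta(x,\rho),
\]
the second being differentiation under the integral sign. The resulting integrand over $T$ is precisely $\wt A \nabla_{x,\rho} u_\theta \cdot \nabla_{x,\rho} \vp$, so we obtain
\[
\int_T \wt A \nabla u_\theta \cdot \nabla \vp \, dxd\rho = 0,
\]
which is the weak form of $L_0 u_\theta = 0$.

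\emph{Step 2: $\LL_0 \wt u_\theta = 0$ in $B \sm \Gamma$.} Take $\psi \in C_0^\infty(B \sm \Gamma)$. Because $\wt u_\theta$ is radial in $t$, its gradient has the simple form $\nabla_x \wt u_\theta = \nabla_x u_\theta(x,|t|)$ and $\nabla_t \wt u_\theta = \d_\rho u_\theta(x,|t|)\, t/|t|$. Substituting these into the block expression for $\A_0 \nabla \wt u_\theta$, one verifies that its $x$-part equals $A_0 \nabla_x u_\theta + \mathbf{b_0} \d_\rho u_\theta$ and its $t$-part equals $\frac{t}{|t|}(\mathbf{c_0} \cdot \nabla_x u_\theta + d_0 \d_\rho u_\theta)$; both depend on $t$ only through $|t|$ and the radial direction. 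Pairing with $\nabla \psi$, the radial $t$-factor contracts against $\nabla_t \psi$ to give $\d_\rho^t \psi$. Passing again to polar coordinates, the weight $w$ is absorbed by the Jacobian, and the $\omega$-integrals of $\psi$ and its $t$-derivatives produce $\psi_\theta$ and $\d_\rho \psi_\theta$ exactly as in Step 1. The computation collapses to
\[
\int_{B \sm \Gamma} \A_0 \nabla \wt u_\theta \cdot \nabla \psi \, dm = \sigma(S^{n-d-1}) \int_T \wt A \nabla u_\theta \cdot \nabla \psi_\theta \, dxd\rho.
\]
Since $\psi \in C_0^\infty(B \sm \Gamma)$, its spherical average $\psi_\theta$ lies in $C_0^\infty(T)$ and thus qualifies as a test function for $L_0 u_\theta = 0$. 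Invoking Step 1 gives that the right-hand side vanishes, and hence $\wt u_\theta$ is a weak solution of $\LL_0 \wt u_\theta = 0$.

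\emph{Main obstacle.} The computation is essentially bookkeeping once one matches the radial $t$-derivatives with $\d_\rho$ of the averaged function; the genuine point that must be argued with care is that the block-matrix form \eqref{eq A0 def} is exactly what makes $\A_0 \nabla \wt u_\theta \cdot \nabla \psi$ decouple cleanly into $\wt A \nabla u_\theta \cdot \nabla \psi_\theta$ after averaging in $\omega$. The off-diagonal blocks are scaled by $t/|t|$ precisely so that the cross terms produce the one-dimensional contractions $\d_\rho^t u$ and $\d_\rho^t \psi$ rather than leaving behind angular residues; any other off-diagonal structure would generate integrals of $\omega$ or $\omega \otimes \omega$ over the sphere that would not reduce to $\wt A$. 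Once this structural compatibility is pinned down, both directions of the lemma are essentially a change of variables.
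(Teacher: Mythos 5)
Your proof is correct, but it takes a genuinely different route from the paper's. The paper first assumes $u$ is $C^2$, writes $\LL_0$ explicitly in polar coordinates (obtaining \eqref{eq L0 polar}, where the only angular term is $\frac{d_0}{\rho^{n-d+1}}\Delta_\omega$), and deduces both claims from the facts that $\Delta_\omega\wt u_\theta=0$ and $\int_{S^{n-d-1}}\Delta_\omega u\,d\omega=0$; it then removes the smoothness assumption by noting that the coefficients of $\A_0$ are smooth away from $\Gamma$, so weak solutions are strong solutions on compact subsets of $B\setminus\Gamma$, which suffices since test functions are supported there. You instead stay entirely within the weak formulation and exploit a duality of test functions: lifting $\vp\in C_0^\infty(T)$ to $\vp(x,|t|)$ for the first claim, and averaging $\psi\in C_0^\infty(B\setminus\Gamma)$ to $\psi_\theta\in C_0^\infty(T)$ for the second, with the second claim then following from the first. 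The algebraic core is the same in both arguments (the off-diagonal blocks scaled by $t/|t|$ contract to the radial derivative $\d_\rho^t$ so no angular residue survives the spherical average), but your version avoids the strong-solution reduction altogether, which is a modest simplification; the paper's version has the side benefit that the polar-coordinate formula \eqref{eq L0 polar} is reused elsewhere, e.g.\ to see that $\LL_0|t|=0$ in Remark \ref{re |t| sol}. The only points you should make sure are justified are that the weak gradient commutes with the spherical average (implicit in Lemma \ref{lem spcs utheta}) and that $\psi_\theta$ is indeed smooth and compactly supported in $T$ away from $\{\rho=0\}$; both are routine.
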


\begin{proof}
Let us assume that $u$ is a $C^2$ function and thus a strong solution. Writing out the derivatives, we see that
\begin{multline*}
    \LL_0=\frac{-1}{\abs{t}^{n-d-1}}\br{\divg_x(A_0\nabla_x)+\divg_x\Big(\frac{\mathbf{b_0}t\cdot\nabla_ t}{\abs{t}}\Big)+\divg_t\bbr{\frac{t^T\mathbf{c_0}\nabla_x}{\abs{t}}}+\divg_t(d_0\nabla_t)}\\
    +\frac{n-d-1}{\abs{t}^{n-d}}\mathbf{c_0}\nabla_x+\frac{n-d-1}{\abs{t}^{n-d+1}}d_0t\cdot\nabla_t \, . 
\end{multline*} 
Fortunately, some of these terms cancel. In fact, 
\[
\divg_t\br{\frac{t^T\mathbf{c_0}\nabla_x}{\abs{t}}}=\frac{(n-d-1)\mathbf{c_0}\nabla_x}{\abs{t}}+\frac{t\cdot\nabla_t(\mathbf{c_0}\nabla_x)}{\abs{t}},
\]
and thus 
\begin{multline*}
    \LL_0=-\frac{1}{\abs{t}^{n-d-1}}\br{\divg_x(A_0\nabla_x)+\divg_x\br{\frac{\mathbf{b_0}t\cdot\nabla t}{\abs{t}}}
    +\frac{t\cdot\nabla_t(\mathbf{c_0}\nabla_x)}{\abs{t}}+d_0\Delta_t}\\
    +\frac{n-d-1}{\abs{t}^{n-d+1}}d_0t\cdot\nabla_t \, . 
\end{multline*}
Changing to polar coordinates $t=\rho\,\omega$, we have $t\cdot\nabla_t=\rho\partial_\rho$, and  \[\Delta_t=\partial_{\rho}^2+(n-d-1)\frac{1}{\rho}\partial_{\rho}+\frac{1}{\rho^2}\Delta_{\omega},\]
where $\Delta_{\omega}$ is the Laplacian on the sphere $S^{n-d-1}$. Then $\LL_0$ can be simplified as  
    \begin{equation}\label{eq L0 polar}
        \LL_0=-\frac1{{\rho}^{n-d-1}}\br{\divg_x (A_0\nabla_x)+\divg_x(\mathbf{b_0}\partial_\rho )+ \partial_\rho(\mathbf{c_0}\nabla_x)+d_0\partial^2_\rho}-\frac{d_0}{{\rho}^{n-d+1}}\Delta_{\omega}.
    \end{equation}
Notice that the quantity in the first parenthesis 
is exactly what we have for $L_0$ in \eqref{def' L0}.
Now since $\wt{u}_\theta$ is radial in $t$, $\Delta_\omega\wt{u}_\theta=0$, and thus,
\begin{equation}\label{eq LL0=L0}
    \LL_0\wt{u}_\theta=\frac{1}{\rho^{n-d-1}}L_0u_\theta.
   \end{equation}
   Notice that  $\int_{S^{n-d-1}}\Delta_{\omega}u(x,\rho\,\omega)d\omega=0$ by the 
   divergence theorem, so that we can add this term for free and get the following.
   \begin{equation*}
       \LL_0\wt u_\theta=-\frac1{{\rho}^{n-d-1}}L_0 u_{\theta}+\frac{d_0}{\rho^{n-d+1}}\fint_{S^{n-d-1}}\Delta_\omega u(x,\rho\,\omega)d\omega.
   \end{equation*}
   Exchanging the order of integration and differentiation, we obtain
    \begin{multline*}
    \LL_0\wt{u}_\theta=-\frac1{{\rho}^{n-d-1}}\fint_{S^{n-d-1}}\br{L_0+\frac{d_0}{\rho^2}\Delta_\omega}u(x,\rho\,\omega)d\omega
    =\fint_{S^{n-d-1}}\LL_0u\, d\omega=0.
   \end{multline*}

This and \eqref{eq LL0=L0} show that $\LL_0\wt{u}_{\theta}=0=L_0u_\theta$. 

The smoothness assumption on solutions is harmless. First, we have checked in Lemma \ref{lem spcs utheta} that given $u\in W_r(B)$, $u_\theta$ and $\wt{u}_\theta$ are in the right spaces stated in the lemma. Now if $u\in W_r(B)$ is a weak solution, it is a strong solution in any compact set in $B\setminus\Gamma$. This is because on these sets, $\abs{t}\ge\delta$ for some $\delta>0$, and thus the coefficients are smooth. Then we use our results for strong solutions and conclude that $u_\theta$ and $\wt u_\theta$ are strong solutions in any compact set in $T$ and $B\setminus\Gamma$, respectively. Then they are of course weak solutions in these compact sets. But this is all we need as in the weak formulation of equations, the test functions are compactly supported in $T$ (for $u_\theta$) and in $B\setminus\Gamma$ (for $\wt u_\theta$).
\end{proof}

\begin{re}\label{re |t| sol}
Writing $\LL_0$ in polar coordinates as in 
\eqref{eq L0 polar}, one immediately sees $\LL_0\abs{t}=0$ in $\Rn\setminus\Real^d$. We shall use this property of $\abs{t}$ in the future.
\end{re}

We now turn to the quantity $\lambda_{x,r}(u)$. First we show that $\lambda_{x,r}(u)\abs{t}$ is the best approximation of a given function $u$ in $B_r(x)$ by a multiple of $\abs{t}$.
\begin{lem}[Orthogonality]\label{lem orth}
For any $(x,r)\in\Real^{d+1}_+$, for any function $u(x,t)$, 
\begin{equation}\label{eq ortho}
  \frac{1}{m(B_r(x))}\int_{B_r(x)}\nabla (u(y,t)-\lambda_{x,r}(u) \abs{t})\cdot\nabla\abs{t}\,w(t)dydt=0.  
\end{equation}
Moreover, 
\begin{equation}\label{eq min lambda}
     \inf_{\lambda\in\Real}\frac{1}{m(B_r(x))}\int_{B_r(x)}\abs{\nabla (u(y,t)-\lambda \abs{t})}^2w(t)dydt=J_u(x,r),
\end{equation}
where $J_u(x,r)$ is defined in \eqref{def J}.
\end{lem}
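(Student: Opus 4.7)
The plan is to observe that away from $\Gamma$ the function $|t|$ is smooth with
\[
\nabla_{y,t}|t| = \Big(0,\frac{t}{|t|}\Big), \qquad |\nabla_{y,t}|t||^2 = 1,
\]
so $\nabla u \cdot \nabla|t| = \nabla_t u \cdot t/|t|$ pointwise. Plugging this identity into the definition \eqref{def lambda} of $\lambda_{x,r}(u)$ shows that $\lambda_{x,r}(u)$ is exactly the $w$-weighted average of $\nabla u \cdot \nabla|t|$ over $B_r(x)$. The orthogonality relation \eqref{eq ortho} then drops out by linearity:
\[
\frac{1}{m(B_r(x))}\int_{B_r(x)}\nabla(u - \lambda_{x,r}(u)|t|)\cdot\nabla|t|\,w\,dydt
= \lambda_{x,r}(u) - \lambda_{x,r}(u)\cdot 1 = 0,
\]
using $|\nabla|t||^2=1$ to handle the second term. (A minor point is that $|t|$ is only defined off $\Gamma$, but $B_r(x)\cap\Gamma$ has $w$-measure zero, so all integrals make sense.)

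For \eqref{eq min lambda} I would run the standard Hilbert-space projection argument. Write, for any $\lambda\in\Real$,
\[
\nabla(u-\lambda|t|) = \nabla(u-\lambda_{x,r}(u)|t|) + (\lambda_{x,r}(u)-\lambda)\nabla|t|,
\]
expand the square, and integrate against $w$. The cross term vanishes by \eqref{eq ortho}, and the remaining two terms give
\[
\frac{1}{m(B_r(x))}\int_{B_r(x)}|\nabla(u-\lambda|t|)|^2\,w\,dydt
= J_u(x,r) + (\lambda_{x,r}(u)-\lambda)^2,
\]
where again $|\nabla|t||^2=1$ is used to evaluate the last integral as $m(B_r(x))$. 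This is manifestly minimized at $\lambda = \lambda_{x,r}(u)$ with minimal value $J_u(x,r)$, proving \eqref{eq min lambda}. There is no real obstacle: the entire argument is a one-line projection computation, and the only point worth noting is that the weight $w$ plays no special role because $|\nabla|t||\equiv 1$, so $\abs{t}$ behaves like an orthonormal direction in the weighted Dirichlet inner product.
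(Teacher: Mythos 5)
Your proof is correct and follows essentially the same route as the paper: both compute $\nabla(u-\lambda|t|)\cdot\nabla|t|=\frac{\nabla_t u\cdot t}{|t|}-\lambda$ using $|\nabla|t||\equiv 1$, deduce \eqref{eq ortho} from the definition of $\lambda_{x,r}(u)$ as a weighted average, and then obtain \eqref{eq min lambda} by the Pythagorean expansion in which the cross term vanishes by orthogonality. No gaps.
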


\begin{proof}
For any $\lambda\in\Real$, we compute
\begin{align}\label{eq lambdacpt1}
   \nabla (u-\lambda\abs{t})\cdot\nabla\abs{t}
   =\sum_{i=1}^{n-d}\br{\partial_{t_i}u-\frac{\lambda t_i}{\abs{t}}}\frac{ t_i}{\abs{t}}=\frac{\nabla_t u\cdot t}{\abs{t}}-\lambda.
\end{align}
By the definition of $\lambda_{x,r}(u)$ in \eqref{def lambda}, $\frac{\nabla_t u\cdot t}{\abs{t}}-\lambda_{x,r}(u)$ is orthogonal to constants in $L^2(B_r(x),w)$. Therefore, using \eqref{eq lambdacpt1} with $\lambda=\lambda_{x,r}(u)$, one sees that \eqref{eq ortho} holds. Turning to \eqref{eq min lambda}, we see that for any $\lambda\in\Real$,
\begin{multline*}
    \frac{1}{m(B_r(x))}\int_{B_r(x)}\abs{\nabla (u(y,t)-\lambda \abs{t})}^2w(t)dydt\\
    = \frac{1}{m(B_r(x))}\int_{B_r(x)}\abs{\nabla (u(y,t)-\lambda_{x,r}(u) \abs{t})}^2w(t)dydt\\ + \frac{\abs{\lambda_{x,r}(u)-\lambda}^2}{m(B_r(x))}\int_{B_r(x)}\abs{\nabla \abs{t}}^2w(t)dydyt\\
    =J_u(x,r)+\abs{\lambda_{x,r}(u)-\lambda}^2\ge J_u(x,r),
\end{multline*}
where in the first equality we have used \eqref{eq ortho}.
\end{proof}
 It follows from \eqref{eq min lambda} that $J_u(x,r)\le E_u(x,r)$, which implies 
\begin{equation}\label{eq beta<1}
    \beta_u(x,r)\le 1\qquad \text{for any } (x,r)\in \Real^{d+1}_+.
\end{equation}
The following lemma shows that the best approximation of $u$ by a multiple of $\abs{t}$ in $B_r(x)$ is the same as the best approximation of $u_\theta$ in $T(x,r)$.

\begin{lem}\label{lem lambda(u) in polar}
Let $x\in\Real^d$, $r>0$, and 
$u$ be as in Lemma \ref{lem orth}. Define $u_\theta$ as in Definition \ref{def utheta}. Then
\begin{equation*}
    \lambda_{x,r}(u)=\fint_{T(x,r)}\partial_\rho u_\theta(y,\rho)dyd\rho.
\end{equation*}
\end{lem}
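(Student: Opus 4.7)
The plan is to unpack both sides, changing the integral on the left to polar coordinates in the $t$-variable, and recognize the spherical average as $u_\theta$.

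First, I would observe that the integrand $\frac{\nabla_t u(z,t)\cdot t}{|t|}$ is exactly the radial derivative of $u$ with respect to $t$: writing $t = \rho\omega$ with $\rho = |t|$ and $\omega \in S^{n-d-1}$, this quantity equals $\partial_\rho u(z,\rho\omega)$. Next, I would change variables in the weighted integral over $B_r(x)$ to polar coordinates in $t$. Since $dt = \rho^{n-d-1}\,d\rho\,d\omega$, the weight $|t|^{-(n-d-1)}$ cancels exactly, giving
\begin{equation*}
\int_{B_r(x)} \frac{\nabla_t u(z,t)\cdot t}{|t|}\,\frac{dz\,dt}{|t|^{n-d-1}} = \int_{|z-x|\le r}\int_0^{\sqrt{r^2-|z-x|^2}}\int_{S^{n-d-1}} \partial_\rho u(z,\rho\omega)\,d\omega\,d\rho\,dz.
\end{equation*}

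Then, since differentiation in $\rho$ commutes with integration over $\omega$, the inner spherical integral equals $\sigma(S^{n-d-1})\,\partial_\rho u_\theta(z,\rho)$ by the very definition of $u_\theta$ in \eqref{eqdef utheta}. The remaining double integral over $\{|z-x|\le r,\ 0\le \rho \le \sqrt{r^2-|z-x|^2}\}$ is precisely the integral over the Carleson box $T(x,r) \subset \R^{d+1}_+$, so
\begin{equation*}
\int_{B_r(x)} \frac{\nabla_t u\cdot t}{|t|}\,\frac{dz\,dt}{|t|^{n-d-1}} = \sigma(S^{n-d-1}) \int_{T(x,r)} \partial_\rho u_\theta(y,\rho)\,dy\,d\rho.
\end{equation*}

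Finally, I would invoke the identity \eqref{eq mBandT}, namely $m(B_r(x)) = \sigma(S^{n-d-1})\,|T(x,r)|$, to divide both sides by $m(B_r(x))$ and obtain the claimed equality. There is no real obstacle here; the only point requiring a word of justification is that $\partial_\rho u_\theta$ is locally integrable on $T(x,r)$ so the right-hand side makes sense, which follows from Lemma~\ref{lem spcs utheta}(2). I would briefly note that a smoothness assumption on $u$ can be made first to justify the interchange of differentiation and spherical averaging, and then removed by density as in the proof of Lemma~\ref{lem utheta sol of L0}.
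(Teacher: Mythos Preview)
Your proof is correct and follows essentially the same route as the paper: both arguments write $\frac{\nabla_t u\cdot t}{|t|}$ as the radial derivative, pass to polar coordinates in $t$ so that the weight cancels, swap the $\rho$-derivative with the spherical average to produce $\partial_\rho u_\theta$, and conclude via $m(B_r(x))=\sigma(S^{n-d-1})\,|T(x,r)|$. Your additional remarks on integrability and the density justification are fine but not strictly needed beyond what the paper already assumes.
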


\begin{proof}
Without loss of generality, we may assume that $x$ is the origin and $r=1$. Passing to polar coordinates $t=\rho\omega$, and noticing that 
\(\frac{t\cdot\nabla_tu}{\abs{t}}=\frac{\rho\partial_\rho u}{\rho}\), 
we have
\begin{multline*}
    \frac{1}{m(B)}\int_{B}\frac{\nabla_tu\cdot t}{\abs{t}}w(t)\, dxdt =\frac{1}{m(B)}\int_{\abs{x}\le 1}\int_{\abs{t}\le\sqrt{1-\abs{x}^2}} \frac{\nabla_tu(x,t)\cdot t}{\abs{t}}w(t)\, dxdt\\
=\frac{1}{m(B)}\int_{\abs{x}\le1}\int_0^{\sqrt{1-\abs{x}^2}}\int_{S^{n-d-1}}\partial_\rho u(x,\rho\omega)d\omega d\rho dx.
\end{multline*}
Exchanging the order of integration and differentiation,
\begin{multline*}
\frac{1}{m(B)}\int_{B}\frac{\nabla_tu\cdot t}{\abs{t}}w(t)\, dxdt=\frac{1}{m(B)}\int_{T_1}\br{\partial_\rho\int_{S^{n-d-1}}u(x,\rho\omega)d\omega}d\rho dx\\
=\fint_{T_1}\br{\partial_\rho\fint_{S^{n-d-1}}u(x,\rho\omega)d\omega}d\rho dx=\fint_{T_1}\partial_\rho u_{\theta}(x,\rho)dxd\rho,
\end{multline*}
because $|T_1| = m(B)\sigma(S^{n-d-1})$ and as desired.
\end{proof}

\section{Estimates for solutions of $\LL_0u=0$}\label{sec Ju0}
\subsection{More about function spaces}\label{subsec fs}
When proving estimates for (weak) solutions, it is useful to allow test functions that
lie in a bigger space than $C_0^\infty$. For this reason, we now define some new spaces.

\begin{defn}
Let $O\subset\Rn$ be an open, bounded set. Define 
\begin{equation}\label{def WO}
    W(O):=\set{u\in L^1_{loc}(O): \nabla u\in L^2(O,w)}.
\end{equation}
Here $L^1_{loc}(O)$ is for the Lebesgue measure, which is more natural if we want to 
see $u$ as a distribution and talk about its gradient.
Equip $W(O)$ with the seminorm $\norm{f}_{W(O)}=\br{\int_{O}\abs{\nabla f}^2dm}^{1/2}$. Define $W_0(O)$ to be the closure of $C_0^\infty(O)$ under $\norm{\cdot}_{W(O)}$. 
\end{defn}
As we shall see, $W(O)$ plays the same role as the usual Sobolev space $W^{1,2}(O)$, and $W_r(O)$ should be compared with $W^{1,2}_{loc}(O)$.

For the purposes of this paper, we are only interested in the simple case when $O$ is a ball $B$ centered on $\Gamma$, or $O = B \sm \Gamma$.
\begin{lem}\label{lem WB}
Let $B\subset\Rn$ be a ball centered on $\Gamma$. Then 
\begin{enumerate}
    \item $W(B\setminus\Gamma)=W(B)=\set{u\in L^1(B): \nabla u\in L^2(B,w)}$;
    \item $W(B)\subset W^{1,2}(B) = \{u\in L^2(B, dX): \nabla u\in L^2(B,w)$;
    \item If $u\in W_r(2B)$, then $u\in W(B)$.
\end{enumerate}
\end{lem}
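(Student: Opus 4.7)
The plan is to treat the three parts in increasing order of difficulty: (3) is a definition unwinding, (2) follows from the weighted Poincar\'e inequality together with the lower bound $w(t)\ge r^{d+1-n}$ inside $B$, and (1) is the main content, being a removability statement for $\Gamma$ that relies on the codimension hypothesis $n-d\ge 2$.

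For (3), given $u\in W_r(2B)$, note that $\overline{B}$ is compact in $2B$, so $\nabla u\in L^2_{\loc}(2B,w)$ forces $\nabla u\in L^2(B,w)$, while $u\in L^1_{\loc}(2B)\subset L^1_{\loc}(B)$; hence $u\in W(B)$. For (2), assume $u\in W(B)$ with $B=B_r(x_0)$, $x_0\in\Gamma$. Granted $u\in L^1(B)$ from (1), the $dm$-average $u_B$ is finite since $m(B)\approx r^{d+1}<\infty$, and Lemma~\ref{lem Poincare} with $p=2$ yields $\int_B|u-u_B|^2\,dm\lesssim r^2\int_B|\nabla u|^2\,dm<\infty$. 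Because $|t|\le r$ on $B$ and $d+1-n<0$, one has $w(t)\ge r^{d+1-n}$ on $B$, so
\begin{equation*}
\int_B |u-u_B|^2\,dx\;\le\;r^{n-d-1}\int_B|u-u_B|^2\,dm\;<\;\infty,
\end{equation*}
which gives $u\in L^2(B,dx)$, i.e., $u\in W^{1,2}(B)$.

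For (1), the inclusion $W(B)\subset W(B\sm\Gamma)$ is immediate by restriction, and the identification $W(B)=\{u\in L^1(B):\nabla u\in L^2(B,w)\}$ follows from the argument of (2) once the reverse inclusion $W(B\sm\Gamma)\subset W(B)$ is in hand. The crux is therefore this reverse inclusion. Given $u\in L^1_{\loc}(B\sm\Gamma)$ with $\nabla u\in L^2(B\sm\Gamma,w)$, a Whitney-type decomposition of $B\sm\Gamma$ into balls $B_j$ with radius comparable to $\dist(B_j,\Gamma)$---on which $w$ is essentially constant---combined with the standard Poincar\'e inequality, shows $u\in L^1_{\loc}(B)$, with $\int_K|u|\,dx$ for $K\Subset B$ controlled by $\norm{\nabla u}_{L^2(B,w)}$ and the $L^1$-norm of $u$ on one fixed reference ball strictly inside $B\sm\Gamma$. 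To identify distributional gradients, take $\varphi\in C_0^\infty(B)$ and a radial cutoff $\eta_\epsilon(x,t)$ with $\eta_\epsilon=1$ for $|t|\ge 2\epsilon$, $\eta_\epsilon=0$ for $|t|\le\epsilon$, and $|\nabla\eta_\epsilon|\lesssim\epsilon^{-1}\mathbf{1}_{\{\epsilon<|t|<2\epsilon\}}$. Integration by parts on $B\sm\Gamma$ against $\varphi\eta_\epsilon\in C_0^\infty(B\sm\Gamma)$ gives $\int u\,\partial_i(\varphi\eta_\epsilon)\,dx=-\int\varphi\eta_\epsilon\partial_i u\,dx$; the main terms pass to the limit by dominated convergence, and the only nontrivial piece is the error $\int u\varphi\,\partial_i\eta_\epsilon\,dx$, which by H\"older's inequality is controlled by
\begin{equation*}
\epsilon^{-1}\Bigl(\int_{\{\epsilon<|t|<2\epsilon\}\cap\supp\varphi}|u|^2\,dm\Bigr)^{1/2}\Bigl(\int_{\{\epsilon<|t|<2\epsilon\}\cap\supp\varphi}w^{-1}\,dx\Bigr)^{1/2}.
\end{equation*}
A polar-coordinate computation gives $\int_{\{\epsilon<|t|<2\epsilon\}}w^{-1}\,dx\approx\epsilon^{2(n-d)-1}$, so the whole error is $O(\epsilon^{n-d-3/2})$, which tends to $0$ precisely because $n-d\ge 2$.

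The main obstacle is exactly this capacity-style vanishing: the whole scheme hinges on $n-d\ge 2$, so part (1) is where the lower-dimensionality hypothesis $d<n-1$ enters in an essential way. A secondary technical point is that one must first bootstrap to some $L^p(B,dm)$ control on $u$ (via Whitney Poincar\'e plus the $L^1_{\loc}(B)$ bound above) in order to legitimately apply the weighted Poincar\'e inequality of Lemma~\ref{lem Poincare} and the H\"older estimate above; this is straightforward once $u\in L^1_{\loc}(B)$ is in place.
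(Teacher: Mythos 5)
Your overall architecture is sound, and parts (2) and (3) essentially coincide with the paper's: the paper also runs the weighted Poincar\'e inequality and then converts $L^2(B,dm)$ to $L^2(B,dX)$, except that it cites (2.13) of \cite{david2017elliptic} where you use the explicit lower bound $w(t)\ge r^{d+1-n}$ on $B$ (these are the same estimate). The real divergence is in part (1): the paper simply invokes Lemma 3.2 of \cite{david2017elliptic}, which already packages the statement that the distributional gradient computed on $\Omega=\Rn\sm\Gamma$ serves as a gradient on $\Rn$, whereas you re-prove this removability from scratch via the cutoff $\eta_\epsilon$ and the capacity-type computation $\int_{\{\epsilon<|t|<2\epsilon\}}w^{-1}\,dX\approx\epsilon^{2(n-d)-1}$. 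That computation is correct, the exponent $n-d-3/2>0$ is exactly where $d<n-1$ enters, and making this explicit is genuinely informative; the paper's version buys brevity at the cost of opacity.

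However, the step you dismiss as ``a secondary technical point \dots straightforward once $u\in L^1_{\loc}(B)$ is in place'' is in fact the crux, and as written it is a gap. To make the error term $\epsilon^{n-d-3/2}\bigl(\int_{A_\epsilon}|u|^2\,dm\bigr)^{1/2}$ vanish you need $u\in L^2_{\loc}(B,dm)$ \emph{near} $\Gamma$, i.e.\ square-integrability against the weight $w$, which blows up on $\Gamma$; this does not follow from $u\in L^1_{\loc}(B,dX)$, and it cannot be obtained from Lemma \ref{lem Poincare} at this stage because that lemma is stated for $u\in W$, i.e.\ it presupposes the conclusion you are trying to prove. Moreover the na\"ive Whitney chaining you gesture at does not deliver it: bounding $|u_{B_j}|$ along a worst-case chain from a reference ball gives $|u_{B_j}|\lesssim |u_{B_0}|+\delta_j^{(1-d)/2}\norm{\nabla u}_{L^2(w)}$, and the resulting sum $\sum_k 2^{k(d-2)}$ over Whitney scales diverges for $d\ge 2$. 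The statement is true, but proving it requires the full averaged/maximal-function version of the chaining argument, which is precisely the content of Lemmas 3.2 and 4.1--4.2 of \cite{david2017elliptic}. So either cite those lemmas (as the paper does, making your cutoff argument redundant), or accept that the ``bootstrap'' is the main body of the proof and carry it out; as it stands your part (1) is circular at exactly the point where the work lives.
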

\begin{proof}
(1) Let $u \in W(B\setminus\Gamma)$ be given. By definition, 
$u \in L^1_{loc}(B \sm \Gamma, dX) =  L^1_{loc}(B \sm \Gamma, w dX)$, and 
by  Lemma 3.2 in \cite{david2017elliptic}, 
$u\in L^{1}_{loc}(B, dX)$.  So $u \in W(B)$; 
we still need to check that $u \in L^1(B,dX)$. However Poincar\'e's inequality (Lemma \ref{lem Poincare}, with $p=1$) says that $u \in L^1(B, wdX)$, and then an easy estimate ((2.13) in \cite{david2017elliptic}) 
shows that $u \in L^1(B,dX)$. Notice that although our assumptions, for instance in Lemma \ref{lem Poincare},
appear to be global, we never use the values of $u$ outside of $B$.

(2) For $u\in W(B)$, we now apply Poincar\'e's inequality (Lemma \ref{lem Poincare}), now with $p=2$,
to find that 
\[ \int_{B}\abs{u-u_B}^2 wdX\le C \int_B\abs{\nabla u}^2 dm,\]
Then again $u \in L^2(B,dX)$ by (2.13) in \cite{david2017elliptic},
this time applied to $g = \abs{u-u_B}^2$.

(3) follows immediately from (1) and the definition \eqref{def Wr}.  

\end{proof}

Next we claim that if $u\in W(B)$ is a (weak) solution of $\LL u=0$ in $B\setminus\Gamma$, we can take test functions in the space $W_0(B\setminus\Gamma)$. That is, 
\begin{equation}\label{eq testfucW0}
    \int_B\A\nabla u\cdot\nabla\vp\, dm=0
   \ \text{ for every } \vp\in W_0(B\setminus\Gamma).
\end{equation}
In fact, 
since $\vp\in W_0(B\setminus\Gamma)$ we can find
a sequence $\set{\vp_k}$ 
in $C_0^\infty(B\setminus\Gamma)$ that converges to $\vp$ in $W(B\setminus\Gamma)$. Then 
\begin{multline*}
    \abs{\int_{B}\A\nabla u\cdot\nabla\vp_k dm - \int_{B}\A\nabla u\cdot\nabla\vp dm }\\
    \le \mu_0\br{\int_B\abs{\nabla u}^2dm}^{1/2}\br{\int_B\abs{\nabla\vp_k-\nabla\vp}^2dm}^{1/2}.
\end{multline*}
The right-hand side is finite and vanishes as $k$ go to infinity. So \eqref{eq testfucW0} follows from taking limits.

Let us also discuss 
the trace on $\bdy(B\setminus\Gamma)=\bdy B\cup (\Gamma\cap B)$. 
Since $W(B)$ is a subset of $W_r(B)$, for $u\in W(B)$, its trace 
 $Tu$  
on $B\cap\Gamma$ can be defined by \eqref{eq trace0} for almost every $x\in B\cap\Gamma$, and $Tu\in L^1_{loc}(B\cap\Gamma,dx)$. Moreover, by slightly modifying the proof of \cite{david2017elliptic}, 
Theorem 3.4, one can show that 
\[
\norm{Tu}_{L^2(B\cap\Gamma,dx)}\lesssim \norm{u}_{L^1(B)}+\norm{\nabla u}_{L^2(B,w)}.
\]
For $u\in W(B)$, we can define its trace on $\bdy B$ by
\[
Tu(X):=\lim_{r\to 0}\fint_{B_r(X)\cap B}u(Y)dY \qquad\text{for }X\in\bdy B,
\]
and one can show
that $\norm{Tu}_{L^2(\bdy B)}\lesssim\norm{u}_{L^1(B)}+\norm{\nabla u}_{L^2(B,w)}$.
Alternatively, since we proved that $W(B)\subset W^{1,2}(B)$, the trace theorem for Sobolev spaces applies.  We remark that in \cite{david2020elliptic}, a trace theorem is developed in a much more general setting and is different from what we have discussed here. But for the purposes of this paper, this simpler approach suffices.

\subsection{Decay estimates for the non-affine part of solutions}\label{subsec Ju0}

We want to show that for a solution of $\LL_0 u=0$ that vanishes on $\Gamma=\Real^d$, its non-affine part $J_u(x,r)$ decreases in $r$. 
In the case when $d=n-1$, this property can be obtained from Moser estimates for solutions on the boundary. We state it in $T_1=T(0,1)$, 
for the constant coefficient operator $L_0$ that was defined in \eqref{def L0},
to simplify the notation.

\begin{lem}[$d=n-1$ case, \cite{DLMgreen}, Lemma 3.4 ]\label{lem u0-lambda t d=n-1}
Let $u\in W^{1,2}(T_1)$ be a solution of $L_0u=0$ in $T_1$ with $u=0$ on $\Delta_1$.
Then there exists some constant $C$ depending only on $d$ and $\mu_0$, such that for $0<r<1/2$,
\begin{equation}\label{eq u0-lambda t}
    \fint_{T_r}\abs{\nabla\br{u(x,t)-\lambda_{r}(u)\, t}}^2dxdt\le C r^2\fint_{T_1}\abs{\nabla (u(x,t)-\lambda_1(u)\, t)}^2dxdt,
\end{equation}
where $\lambda_{r}(u)=\fint_{T_{r}}\partial_s u(y,s)dyds$.\footnote{Note that we are using the same notation $\lambda_r(u)$ to denote different quantities in $d=n-1$ and $d<n-1$.}
\end{lem}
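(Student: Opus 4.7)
The plan is to subtract off the affine part of $u$ at scale $1$, apply classical boundary regularity for the constant coefficient operator $L_0$ on the flat boundary $\Delta_1$, and then invoke the best approximation property of $\lambda_r$.

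First I would set $v(x,t) := u(x,t) - \lambda_1(u)\,t$. Since $L_0$ has constant coefficients and annihilates the linear function $(x,t)\mapsto t$, the function $v$ solves $L_0 v = 0$ in $T_1$ with $v = 0$ on $\Delta_1$. A direct computation from the definition of $\lambda_r$ gives $\lambda_r(v) = \lambda_r(u) - \lambda_1(u)$, so $u - \lambda_r(u)\,t = v - \lambda_r(v)\,t$, and the claimed inequality reduces to
\[
\fint_{T_r} \abs{\nabla(v-\lambda_r(v)\,t)}^2\, dxdt \;\le\; Cr^2 \fint_{T_1} \abs{\nabla v}^2\, dxdt.
\]

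Next I would invoke classical boundary regularity. Because $\wt A$ is constant, $v$ vanishes on the flat piece $\Delta_1$, and $L_0$ is uniformly elliptic, standard theory (for example, diagonalize $\wt A$ by a linear change of variables reducing $L_0$ to the Laplacian on a half space, extend $v$ by odd reflection across $\Delta_1$, and apply interior estimates for constant coefficient equations) gives $v \in C^\infty(\overline{T_{1/2}})$ with
\[
\norm{v}_{C^2(\overline{T_{1/2}})} \;\le\; C \norm{v}_{L^2(T_{3/4})} \;\le\; C \norm{\nabla v}_{L^2(T_1)},
\]
where the second inequality is Poincar\'e using $v \equiv 0$ on $\Delta_1$. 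The constant depends only on $d$ and $\mu_0$ because the reduction to the Laplacian is purely linear-algebraic in $\wt A$.

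Finally, since $v$ vanishes on $\Delta_1$, we have $\nabla_x v(0,0) = 0$; setting $\mu := \d_t v(0,0)$, Taylor's theorem yields
\[
\abs{\nabla v(x,t) - (0,\dots,0,\mu)} \;\le\; C(\abs{x}+t)\,\norm{v}_{C^2(\overline{T_{1/2}})} \;\le\; Cr\,\norm{\nabla v}_{L^2(T_1)}
\]
on $T_r$ for $r<1/2$, and squaring and averaging over $T_r$ gives the target estimate with $\mu$ in place of $\lambda_r(v)$. The co-dimension one analogue of Lemma \ref{lem orth} says that $\lambda_r(v)$ minimizes $\lambda \mapsto \fint_{T_r}\abs{\nabla(v-\lambda t)}^2$, so replacing $\mu$ by $\lambda_r(v)$ can only decrease the left-hand side, completing the proof. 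The only real obstacle is recording the $C^2$ boundary estimate with a constant depending only on $d$ and $\mu_0$; this is classical, but deserves care because $\wt A$ need not be symmetric.
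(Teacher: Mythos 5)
Your proof is correct and follows essentially the route the paper describes for this cited result (control of the non-affine part in $T_r$ via the oscillation of $\nabla v$, obtained from boundary second-derivative estimates for the constant-coefficient operator, combined with the minimizing property of $\lambda_r$). The reduction to $v=u-\lambda_1(u)t$, the $C^2$ boundary estimate by reduction to the Laplacian and reflection, the Taylor expansion at the origin, and the final appeal to the co-dimension one orthogonality all check out, with constants depending only on $d$ and $\mu_0$.
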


The way we show this decay estimate is by controlling 
the non-affine part of the solution in $T_r$ by the oscillation of the derivative of some solution in $T_r$, which is further controlled by the energy of the solution in $T_1$ multiplied by $r^2$. The bound on the oscillation of the first derivative of solutions is essentially a consequence of estimates for the {\it second} derivatives of solutions. However, when $d<n-1$, we do not have a good estimate for the second derivatives because the coefficients involve $\abs{t}^{-n+d+1}$, which is singular on the boundary. Fortunately, we still have an analogue of Lemma \ref{lem u0-lambda t d=n-1} in the case of $d<n-1$. 
The first step is to show that solutions of $\LL_0u=0$ with a vanishing trace on $\Gamma$ are
roughly  Lipschitz in $t$ near the boundary. 
To be precise, we have the following.

\begin{lem}\label{lem Lip}
Let $B$ be a ball centered on $\Gamma$ and let $u\in W_r(2B)$ be a solution of $\LL_0u=0$ in $2B\setminus\Gamma$, 
 with $Tu=0$  on $\Gamma\cap 2B$. 
 Then there is some constant $C>0$ depending only on $d,n$ and $\mu_0$, such that 
\begin{equation}\label{eq u Lip}
    \abs{u(x,t)}\le C\br{\frac{1}{m(B)}\int_{B}\abs{\nabla u}^2dm}^{1/2}\abs{t}, \qquad\text{for all }\, (x,t)\in B.
\end{equation}
\end{lem}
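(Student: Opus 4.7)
The decisive ingredient is that $\abs{t}$ is itself a positive solution of $\LL_0 v=0$ in $\Rn\setminus\Real^d$ with $Tv=0$ on $\Gamma$ (Remark \ref{re |t| sol}), and that $\abs{t(X_r)}\approx r$ at a corkscrew point $X_r$ for $B=B_r(x_0)$. The strategy is to compare $u$ with an appropriate multiple of $\abs{t}$ and to reduce the signed case to the nonnegative one.

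\emph{Step 1 (crude $L^\infty$ bound).} The function $|u|$ is a nonnegative subsolution of $\LL_0$ in $2B\setminus\Gamma$ with $T|u|=0$ on $\Gamma\cap 2B$, because the maximum of the two subsolutions $u$ and $-u$ is again a subsolution. The boundary Moser estimate (Lemma \ref{lem bdyMoser}) followed by the Poincar\'e inequality with vanishing trace (Lemma \ref{lem Poincare}) then gives
\[
\sup_B |u| \ \leq\ Cr\left(\frac{1}{m(B)}\int_{(3/2)B}|\nabla u|^2\,dm\right)^{1/2}\ \leq\ Cr\,\|u\|_{\ast},
\]
where $\|u\|_{\ast}:=\bigl(m(B)^{-1}\int_B |\nabla u|^2\,dm\bigr)^{1/2}$; to pass from $(3/2)B$ to $B$ one uses the doubling of $m$ and the fact that $u$ is a solution in $2B$. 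In particular, \eqref{eq u Lip} already follows for points with $\abs{t}\gtrsim r$, so from now on I focus on $\abs{t}\ll r$.

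\emph{Step 2 (nonnegative case).} Assume additionally that $u\geq 0$ and $u\not\equiv 0$. Since $u$ and $v(X):=\abs{t}$ are both nonnegative nontrivial solutions of $\LL_0$ in $2B\setminus\Gamma$ with vanishing trace on $\Gamma\cap 2B$, the comparison principle (Lemma \ref{lem cmpsn}) applied in $B$ yields, for a corkscrew point $X_r$,
\[
\frac{u(x,t)}{\abs{t}}\ \leq\ C\,\frac{u(X_r)}{\abs{t(X_r)}}\ \leq\ \frac{C\,u(X_r)}{r}\qquad \text{for every }(x,t)\in B\setminus\Gamma.
\]
Combined with $u(X_r)\leq Cr\|u\|_{\ast}$ from Step~1, this proves \eqref{eq u Lip} when $u\geq 0$.

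\emph{Step 3 (removing the sign assumption).} For a general signed solution, let $g$ denote the trace of $u$ on $\partial B$, set $g_\pm:=\max(\pm g,0)$, and let $u_\pm\in W(B)$ be the $\LL_0$-harmonic extensions in $B\setminus\Gamma$ with boundary data $g_\pm$ on $\partial B$ and $Tu_\pm =0$ on $\Gamma\cap B$; their existence follows from Lax--Milgram in the framework of \cite{david2017elliptic}, and the maximum principle gives $u_\pm\geq 0$. By linearity and uniqueness, $u=u_+-u_-$ in $B$. Moreover, $u^{\pm}=\max(\pm u,0)\in W(B)$ is an admissible competitor for the Dirichlet energy that defines $u_\pm$ (since $Tu^\pm=0$ and $u^\pm=g_\pm$ on $\partial B$), and $|\nabla u^\pm|\leq|\nabla u|$ a.e., hence
\[
\|u_\pm\|_{\ast}^{\,2}\ \leq\ \frac{1}{m(B)}\int_B |\nabla u^\pm|^2\,dm\ \leq\ \|u\|_{\ast}^{\,2}.
\]
Applying Step~2 to $u_+$ and $u_-$ separately and using the triangle inequality gives $|u(x,t)|\leq \bigl(u_+(x,t)+u_-(x,t)\bigr)\leq C\,\|u\|_{\ast}\abs{t}$, which is \eqref{eq u Lip}.

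\emph{Main obstacle.} The comparison step (Step 2) is the crisp part; the real technical effort is Step~3, where one has to solve the mixed Dirichlet problem for $\LL_0$ in $B\setminus\Gamma$ (data on $\partial B$ versus trace on $\Gamma\cap B$), verify nonnegativity of $u_\pm$, and control their Dirichlet energies by that of $u$. All of this is available via the variational theory of \cite{david2017elliptic}, but must be invoked carefully. An alternative route, avoiding the decomposition, would be to run a Hopf-type iteration of Lemma \ref{lem bdyMoser} at the scale $\abs{t}$ inside $B_{\abs{t}}(x,0)$ and then upgrade the resulting gradient bound via the reverse H\"older estimate Lemma \ref{lem RH} to recover the full-scale average over $B$; either route is the main effort.
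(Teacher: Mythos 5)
Your proof follows essentially the same route as the paper: handle the nonnegative case by comparing $u$ with the solution $\abs{t}$ via Lemma \ref{lem cmpsn} and Lemma \ref{lem corkscrew}, then reduce the signed case by splitting into positive and negative parts and replacing each by its $\LL_0$-harmonic extension in $B\setminus\Gamma$. (Your decomposition of the boundary datum $g=g_+-g_-$ produces exactly the paper's extensions $v_i$ of $u_1=\max(u,0)$ and $u_2=\max(-u,0)$, since $\max(\pm u,0)$ has trace $g_\pm$ on $\partial B$ and trace $0$ on $\Gamma\cap B$.)

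One justification in Step 3 does not hold as written: you bound $\|u_\pm\|_\ast$ by invoking the Dirichlet principle, i.e.\ that $u_\pm$ minimizes the Dirichlet energy among competitors with the same boundary data and that $u^\pm=\max(\pm u,0)$ is such a competitor. The class $\cA_0(\mu_0)$ contains non-symmetric matrices (the vectors $\mathbf{b_0}$ and $\mathbf{c_0}$ are unrelated), and for a non-symmetric operator the weak solution of the Dirichlet problem is not an energy minimizer, so the inequality $\|u_\pm\|_\ast^2\le m(B)^{-1}\int_B\abs{\nabla u^\pm}^2dm$ has no basis as stated. The repair is the one the paper uses: test the equation $\LL_0 u_\pm=0$ against $u_\pm-u^\pm\in W_0(B\setminus\Gamma)$, then apply ellipticity and Cauchy--Schwarz to get $\int_B\abs{\nabla u_\pm}^2dm\le\mu_0^4\int_B\abs{\nabla u^\pm}^2dm\le\mu_0^4\int_B\abs{\nabla u}^2dm$; the constant $\mu_0^4$ is harmless. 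A second, smaller point: the nonnegativity of $u_\pm$ via ``the maximum principle'' is not free in this degenerate weighted setting --- the paper proves it by first establishing continuity of $u_\pm$ up to $\partial B$ (using the $A_2$-weight theory of \cite{heinonen2006nonlinear}) and then running a truncation argument with the test function $\min\{u_\pm,-\epsilon\}+\epsilon$. If you cite a maximum principle, make sure it is one valid for $W(B)$-solutions of the mixed problem here; otherwise this step also needs the paper's argument. Your Step 1 is not needed once Lemma \ref{lem corkscrew} is available, and as stated it is slightly off anyway ($\abs{u}$ is only a subsolution, so Lemma \ref{lem corkscrew} does not apply to it directly, and passing from the energy on $(3/2)B$ back to $B$ is not just doubling).
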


\begin{proof}
Observe that if $u$ is {\it nonnegative}, then \eqref{eq u Lip} simply follows from the comparison principle and the fact that $\abs{t}$ is a solution of $\LL_0\abs{t}=0$ that vanishes on $\Gamma$. 
In fact, by the comparison principle (Lemma \ref{lem cmpsn}), for $(x,t)\in B\setminus\Gamma$,
\[
\frac{u(x,t)}{\abs{t}}\le 
 C \frac{ u(X_B)}{r(B)}
\le C\br{\frac{1}{m(B)}\int_B\abs{\nabla u}^2dm}^{1/2},
\]
where $X_B$ is a corkscrew point for $B$, 
$r(B)$ denotes its radius, 
and the second inequality is due to Lemma \ref{lem corkscrew}.

If $u$ changes signs in $2B$, we write $u=u_1-u_2$, with 
$u_1=\sup\set{u,0}$ and $u_2=\sup\set{-u,0}$. 
Notice that by Lemma \ref{lem WB} (3), $u\in W(B)$. 
Then by \cite{david2017elliptic},  Lemma~6.1, $u_i\in W(B)$ for $i=1,2$, with 
\[
\int_B\abs{\nabla u_i}^2dm\le\int_B\abs{\nabla u}^2dm, \quad\text{and } Tu_i=0 \text{  on }\Gamma\cap B, \quad i=1,2.
\] 
Moreover, the H\"older continuity of solutions (see \cite{david2017elliptic}, 
Lemma 8.8 and Lemma 8.16) implies that $u\in C(\overline{B})$, and thus $u_i\in C(\overline{B})$ for $i=1,2$. 

We want to look at the solutions $v_i$ to $\LL_0 v_i=0$ in $B\setminus\Gamma$, 
with data $u_i$ on $\bdy(B\setminus\Gamma)$ (and in a suitable weak sense). 
First, the nonhomogeneous problem $\LL_0\wt v_i=-\LL_0 u_i$ in $B\setminus\Gamma$ has a unique solution $\wt v_i\in W_0(B\setminus\Gamma)$ due to the 
Lax-Milgram Theorem. Setting $v_i=\wt v_i+u_i$, one sees that $v_i\in W(B)$ and verifies 
\begin{equation}\label{eq Dir}
    \begin{cases}
    \LL_0v_i=0 \qquad \text{in }B\setminus\Gamma,\\
    v_i-u_i\in W_0(B\setminus\Gamma).
    \end{cases}
\end{equation}
We claim that the $W(B)$ seminorm of $v_i$ is 
controlled by that of $u_i$. To see this, take $v_i-u_i$ as a test function for $\LL_0 v_i=0$, 
which is allowed because $v_i\in W(B)$ and $v_i-u_i\in W_0(B\setminus\Gamma)$ (see the remark around \eqref{eq testfucW0}). Then
\[
\int_B\A_0\nabla v_i\cdot\nabla(v_i-u_i)dm=0.
\]
Therefore, using the ellipticity conditions and the Cauchy-Schwarz inequality, 
\begin{multline*}
     \int_B\abs{\nabla v_{i}}^2dm \le\mu_0\int_B \A_0\nabla v_i\cdot\nabla v_i\, dm = \mu_0\int_B \A_0\nabla v_i\cdot\nabla u_i\, dm\\
    \le\mu_0^2\br{\int_B\abs{\nabla v_i}^2dm}^{1/2}\br{\int_B\abs{\nabla u_i}^2dm}^{1/2},
\end{multline*}
   which implies that
\begin{equation}\label{eq E(v) Lip bd}
   \int_B\abs{\nabla v_i}^2dm\le\mu_0^4\int_B\abs{\nabla u_i}^2dm\le\mu_0^4\int_B\abs{\nabla u}^2dm, \qquad i=1,2. 
\end{equation}

Next, $v_i$ is nonnegative in $B$, for $i=1,2$. To see this, we first show that $v_i$ is continuous in $\overline{B}$.
Since $Tu_i=Tv_i=0$ on $\Gamma\cap B$, the Poincar\'e inequality implies that their weighted $L^2(B,w)$ norm is controlled by their $W(B)$ seminorm. Therefore, both of them belong to the weighted Sobolev space $W^{1,2}(B,w)$. In particular, $u_i\in W^{1,2}(B,w)\cap C(\overline{B})$. Notice that $w(t)$ is an $A_2$ weight with respect to the Lebesgue measure on $\Rn$. That is, there holds
\[
\sup_{B\subset\Rn}\br{\frac{1}{\abs{B}}\int_B\abs{t}^{-n+d+1}dxdt}\br{\frac{1}{\abs{B}}\int_B\abs{t}^{n-d-1}dxdt}<\infty.
\]
So we can apply \cite{heinonen2006nonlinear}, Theorems 6.27 and 6.31, 
to get that for any $X\in\bdy B$, $\lim_{X\rightarrow X_0}v_i(X)=u_i(X_0)$. 
This takes care of continuity on $\bdy B$, so it remains to treat the interior and $\Gamma\cap B$. 
But since $Tv_i=0$ on $\Gamma\cap B$, H\"older estimates for solutions 
(\cite{david2017elliptic} Lemma 8.8 and Lemma 8.16) guarantee that $v_i\in C(B)$. 
So we conclude that $v_i\in C(\overline{B})$, $i=1,2$. Next, we show $v_i\ge 0$ in $B$, using a standard argument.
Set $v^i_\epsilon=\min\set{v_i,-\epsilon}+\epsilon$. Then $v^i_\epsilon\le0$ in $\overline{B}$. Since $v_i\in C(\overline{B})$ is nonnegative on $\bdy(B\setminus\Gamma)$, $v^i_\epsilon$ is compactly supported in $B\setminus\Gamma$. Moreover, 
\begin{equation}\label{eq gradveps}
    \nabla v^i_\epsilon=\begin{cases}
\nabla v_i &\quad v_i<-\epsilon\\
0 &\quad v_i\ge -\epsilon.
\end{cases}
\end{equation}
We take $v^i_\epsilon$ as a test function and get
\[
0=\int_B \A_0\nabla v\cdot\nabla v_\epsilon dm
=\int_B \A_0\nabla v^i_\epsilon\cdot\nabla v^i_\epsilon dm
\ge \mu_0^{-1}\int_{B}\abs{\nabla v^i_\epsilon}^2dm.
\]
This implies that $\nabla v^i_\epsilon=0$ a.e. in $B$, and,
since it is compactly supported in $B \sm \Gamma$, we get that
$v^i_\varepsilon = 0$ a.e. and $v_i \geq -\varepsilon$ in $B$.
Since $\epsilon>0$ is arbitrary, we obtain $v_i\ge 0$ in $B$ for $i=1,2$, as desired.

Now we can apply the result for {\it nonnegative} solutions to $v_i$, and use \eqref{eq E(v) Lip bd} to conclude that
\begin{equation}\label{eq vi}
    v_i(x,t)\le C\br{\frac{1}{m(B)}\int_B\abs{\nabla u}^2dm}^{1/2}\abs{t}, \qquad\text{for }(x,t)\in B, \qquad i=1,2.
\end{equation}
Finally, let $v=v_1-v_2$. Then $v=u$ on $\bdy(B\setminus\Gamma)$ (both in pointwise sense and in $W_0(B\setminus\Gamma)$ sense), and so the uniqueness of the solution implies $u=v$ in $B$. The 
desired estimate for $\abs{u(x,t)}$ follows from \eqref{eq vi} and the fact that $\abs{u}\le v_1+v_2$ in $B$.
\end{proof}

Now we derive the decay estimate for solutions of $\LL_0 u=0$, which is an analogue of Lemma \ref{lem u0-lambda t d=n-1} for $d<n-1$. By the translation and dilation invariance of the problem, we only need to consider the problem on the unit ball. We shall use $J_u(r)$ to denote $J_u(0,r)$. Similarly, $E_u(r)$ and $\beta_u(r)$ are shorthand for $E_u(0,r)$ and $\beta_u(0,r)$, respectively.

\begin{lem}[Key lemma]\label{lem Ju0 decay} For any $\theta_0\in (0,1)$, there exists $r_0=r_0(n,d,\mu_0,\theta_0)\in(0,1)$ such that for any solution $u\in W(B_1)$ of $\LL_0u=0$ in $B_1\setminus\Gamma$, with $Tu=0$  on $\Gamma\cap B_1$, there holds
\begin{equation}\label{eq Ju0 decay}
    J_u(r)\le\theta_0J_u(1), \qquad\text{for}\quad 0<r\le r_0.
\end{equation}
\end{lem}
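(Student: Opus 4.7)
The plan is to decompose $u$ into its radial and purely-rotational parts in $t$, and treat each separately. Set $u_r := \wt u_\theta$ and $u_\perp := u - \wt u_\theta$. By Lemma \ref{lem utheta sol of L0} and linearity, both $u_r$ and $u_\perp$ are solutions of $\LL_0$ in $B_1\setminus\Gamma$ with vanishing trace on $\Gamma\cap B_1$; moreover $(u_\perp)_\theta\equiv 0$, so Lemma \ref{lem lambda(u) in polar} yields $\lambda_{0,r}(u_\perp)=0$ and therefore $\lambda_{0,r}(u)=\lambda_{0,r}(u_r)$. A direct computation in polar coordinates---using that $u_r - \lambda_{0,r}(u_r)|t|$ is radial in $t$, while $\int_{S^{n-d-1}} u_\perp(x,\rho\omega)\,d\omega\equiv 0$ forces both $\int\nabla_x u_\perp\,d\omega$ and $\int\partial_\rho u_\perp\,d\omega$ to vanish---shows the orthogonality
\begin{equation*}
\int_{B_r}\nabla\bigl(u_r-\lambda_{0,r}(u_r)|t|\bigr)\cdot\nabla u_\perp\,dm=0,
\end{equation*}
which in turn gives the Pythagorean splitting
\begin{equation*}
J_u(r)=J_{u_r}(r)+\fint_{B_r}|\nabla u_\perp|^2\,dm.
\end{equation*}

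For the radial part, Lemma \ref{lem utheta sol of L0} says $u_\theta$ solves $L_0 u_\theta = 0$ in $T_1$ with vanishing trace on $\Delta_1$. The polar-coordinate identity $\fint_{B_r}|\nabla\wt f_\theta|^2\,dm=\fint_{T_r}|\nabla f_\theta|^2\,dxd\rho$ (essentially established in the proof of Lemma \ref{lem spcs utheta}) transfers the problem to the upper half-space $\Real^{d+1}_+$, where the codimension-one estimate Lemma \ref{lem u0-lambda t d=n-1}, applied to $L_0$ and $u_\theta$, yields directly
\begin{equation*}
J_{u_r}(r)\le Cr^2 J_{u_r}(1)\le Cr^2 J_u(1),
\end{equation*}
which is at most $(\theta_0/2)J_u(1)$ once $r$ is small enough.

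The remaining piece---and the main obstacle---is to bound $\fint_{B_r}|\nabla u_\perp|^2\,dm$ by $(\theta_0/2)J_u(1)$ for $r$ small. I would reduce this to the uniform statement: for every $\eta>0$ there exists $r_0$ such that every solution $v$ of $\LL_0 v = 0$ in $B_1\setminus\Gamma$ satisfying $Tv=0$, $v_\theta\equiv 0$, and $\fint_{B_1}|\nabla v|^2\,dm=1$ obeys $\fint_{B_{r_0}}|\nabla v|^2\,dm\le\eta$. The approach is a compactness-and-contradiction argument: Caccioppoli's inequality, Lemma \ref{lem Lip}, the Moser/H\"older estimates of Section \ref{sec pre}, and a weighted Rellich embedding give precompactness of any counterexample sequence $\{v_j\}$, yielding a weak limit $v^*$ that is still a solution with $Tv^*=0$, $v^*_\theta\equiv 0$, and $|v^*(x,t)|\le C|t|$. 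Expanding $v^*$ in spherical harmonics $\{Y_k\}_{k\ge 1}$ in $\omega$, separation of variables applied to the polar form \eqref{eq L0 polar} of $\LL_0$ produces, for each mode, a second-order ODE in $\rho$ whose only solutions compatible with the Lipschitz bound at $\rho=0$ behave like $\rho^{s_k}$ with $s_k>1$. Summing then gives $\fint_{B_r}|\nabla v^*|^2\,dm\to 0$ as $r\to 0$, contradicting the assumed lower bound. The delicate point---where I expect most of the technical work to lie---is establishing the $s_k>1$ growth uniformly for the full non-diagonal class $\cA_0(\mu_0)$ rather than just the model case where the off-diagonal blocks vanish, since the coupling between the $x$- and $\rho$-variables in \eqref{eq L0 polar} makes the ODE genuinely a system and the characteristic exponents must be controlled in terms of $\mu_0$ alone.
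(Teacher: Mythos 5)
Your overall strategy is the same as the paper's: split $u$ into $\wt u_\theta$ and $u-\wt u_\theta$, handle the radial part via the co-dimension-one decay (Lemma \ref{lem u0-lambda t d=n-1} applied to $u_\theta$), and handle the rotational part by compactness. The radial half of your argument is fine (your exact Pythagorean splitting is even a little cleaner than the paper's Cauchy--Schwarz with a factor $2$, and $\lambda_{0,r}(u_\perp)=0$ does follow from Lemma \ref{lem lambda(u) in polar}). The problem is the compactness step for $u_\perp$, where there are two genuine gaps.

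First, your contradiction does not close as stated. The counterexample sequence satisfies $\fint_{B_{r_j}}|\nabla v_j|^2\,dm>\eta$ at scales $r_j\to 0$, while your limit $v^*$ is extracted at the fixed scale of $B_1$. Weak (or even locally uniform) convergence on $B_1$ retains no information about the behavior of $v_j$ on $B_{r_j}$ as $r_j\to0$; showing $\fint_{B_r}|\nabla v^*|^2\,dm\to0$ contradicts nothing. You must blow up: set $V_j(X)=r_j^{-1}v_j(r_jX)$, convert the gradient lower bound into $m(B_2)^{-1}\int_{B_2}|V_j|^2\,dm\gtrsim\eta$ via Caccioppoli, and use the Lipschitz bound of Lemma \ref{lem Lip} (which is exactly why that lemma is needed) to get local uniform bounds; the limit is then an \emph{entire} solution on $\Rn\setminus\Gamma$ and the lower bound survives at unit scale. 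Second, your classification of the limit by spherical harmonics does not reduce to an ODE in $\rho$: separating the $\omega$-variable in \eqref{eq L0 polar} still leaves all the $x$-derivatives (including the off-diagonal couplings $\mathbf{b_0},\mathbf{c_0}$), so each mode satisfies a degenerate PDE in $(x,\rho)$ on a bounded domain, and the indicial-exponent analysis you invoke is not available; you flag this yourself but it is precisely the missing step. The paper avoids it entirely: since the blow-up limit $V_\infty$ satisfies $|V_\infty|\le 2c|t|$ globally, the function $2c|t|-V_\infty$ is a positive entire solution vanishing on $\Gamma$, and Corollary \ref{cor cmpsn} applied to it and to $|t|$ with $R\to\infty$ forces $V_\infty=\alpha'|t|$ with $\alpha'\neq0$, contradicting $\fint_{S^{n-d-1}}V_\infty\,d\omega=0$. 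Replacing your limit classification by this boundary-Harnack/Liouville argument, and inserting the rescaling, would make the proof complete.
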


\begin{proof}
We first show that for any $r_1\in (0,1)$ and any $\theta_1\in (0,1)$, there exists $r_0=r_0(\theta_1,r_1,n,d)<r_1$, such that for any solution $u\in W(B_1)$ of $\LL_0u=0$ in $B_1\setminus\Gamma$, with $Tu=0$  on $\Gamma\cap B_1$, there holds
\begin{equation}\label{eq step1}
    \frac{1}{m(B_r)}\int_{B_r}\abs{\nabla\br{u-\wt{u}_{\theta}}}^2dm \le \frac{\theta_1}{m(B_{r_1})}\int_{B_{r_1}}\abs{\nabla\br{u-\wt{u}_{\theta}}}^2dm\qquad\text{for }r\le r_0,
\end{equation}
where $\wt u_\theta$ is defined in \eqref{eqdef wtutheta}.
We prove \eqref{eq step1} by contradiction. If the statement is not true, then there is a $\theta_1\in(0,1)$, 
a sequence of operators $\LL_0^{(k)} \in \cA_0(\mu_0)$,
a sequence $\set{r_k}_{k=1}^\infty$ decreasing to $0$, and a sequence of solutions 
$\set{u^{(k)}}_{k=1}^\infty\subset W(B_1)$ verifying 
$\LL_0^{(k)}u^{(k)}=0$ 
in $B_1\setminus\Gamma$ and $Tu^{(k)}=0$  on $B_1\cap\Gamma$, such that
\begin{equation}\label{eq u-utheta ifnot}
    \frac{1}{m(B_{r_k})}\int_{B_{r_k}}\abs{\nabla(u^{(k)}-\wt{u}^{(k)}_{\theta})}^2dm>\frac{\theta_1}{m(B_{r_1})}\int_{B_{r_1}}\abs{\nabla(u^{(k)}-\wt{u}^{(k)}_{\theta})}^2dm,
\end{equation} 
for $k=1,2,\dots$.
Define 
\[
v_k=\frac{u^{(k)}-\wt{u}^{(k)}_{\theta}}{\br{m(B_{r_1})^{-1}\int_{B_{r_1}}\abs{\nabla \br{u^{(k)}-\wt{u}^{(k)}_{\theta}}}^2dm}^{1/2}}.
\]
Notice that we do not need to worry about the denominator being equal to $0$ because in that case, 
both sides of \eqref{eq u-utheta ifnot} are $0$, making the inequality false. 
By Lemma~\ref{lem utheta sol of L0}, $\wt{u}^{(k)}_{\theta}$ 
verifies $\LL_0^{(k)}\wt{u}^{(k)}_{\theta}=0$,  
and thus $v_k$ verifies $\LL_0^{(k)} v_k=0$ 
in $B_1\setminus\Gamma$, with $Tv_k=0$  on $B_1\cap\Gamma$. 
Moreover, $v_k$ is constructed in a way that guarantees the following properties: 
\begin{align}
   \fint_{\d B(0,r)}v_k\,d\omega=0 \ \text{ for } 0 < r \leq 1,  
    \nonumber\\
    \frac{1}{m(B_{r_1})}\int_{B_{r_1}}\abs{\nabla v_k}^2dm=1,\label{eq vknrml}\\
    \frac{r_k^{-2}}{m(B_{2r_k})}\int_{B_{2r_k}}\abs{v_k}^2dm\ge\theta_1/C,\nonumber
\end{align}
where the last inequality follows from \eqref{eq u-utheta ifnot} and the Caccioppoli inequality on the boundary.

Set $V_k(X):=\frac{1}{r_k}v_k(r_kX)$. Then $\LL_0^{(k)}V_k=0$ 
in $B_{1/r_k}\setminus\Gamma$, with $TV_k=0$ on $B_{1/r_k}\cap\Gamma$. Moreover,
\begin{align}
   \fint_{\d B(0,r)}V_k\,d\omega=0 \ \text{ for } 0 < r \leq 1/r_k,  \label{eq SVk=0}\\
      m(B_2)^{-1}\int_{B_{2}}\abs{V_k}^2dm\ge\theta_1/C.\label{eq VkL2 lwbd}
\end{align}
Notice that \eqref{eq VkL2 lwbd} implies that there exists $(x_k,t_k)\in B_2$ such that  
\begin{equation}\label{eq Xk lwbd}
    \abs{V_k(x_k,t_k)}\ge\sqrt{\theta_1/C}.
\end{equation}
Observe that by \eqref{eq vknrml},
\begin{equation}\label{eq E(wt vk)=1}
    \frac{1}{m(B_{\frac{r_1}{r_k}})}\int_{B_{\frac{r_1}{r_k}}}\abs{\nabla V_k}^2dm=1.
\end{equation}
By \eqref{eq E(wt vk)=1} and Lemma \ref{lem Lip}, there is some constant $c>0$ depending only on $d,n$ and $\mu_0$, such that
\begin{equation}\label{eq wtvk Lip}
    \abs{V_k(x,t)}\le c\abs{t} \quad\text{for all}\quad (x,t)\in B_{\frac{r_1}{2r_k}}.
\end{equation}
Now \eqref{eq Xk lwbd} and \eqref{eq wtvk Lip} imply that the $t_k$ in \eqref{eq Xk lwbd} has to satisfy 
\begin{equation}\label{eq tk}
    2\ge\abs{t_k}\ge C'\theta_1^{1/2}.
\end{equation} 
Moreover, on any compact set in $\Rn$, \eqref{eq wtvk Lip} implies that the sequence 
$\{V_k\}_{k=1}^\infty$ is uniformly bounded, and the regularity of solutions implies that $\{V_k\}_{k=1}^\infty$ is equicontinuous. Therefore, there is a subsequence of $\{V_k\}$, still denoted by $\{V_k\}$, converges pointwise to a $V_{\infty}$. 
We can also find a limit $\LL_0 \in \cA_0(\mu_0)$ of the $\LL_0^{(k)}$, and it is 
easy to verify that $V_{\infty}\in W_r(\Rn)$ is a solution of $\LL_0V_\infty=0$ in $\Rn\setminus \Gamma$, with $V_{\infty}(x,0)=0$ on $\Gamma$. 
For sure there is a convergent subsequence of $\set{(x_k,t_k)}$ in $B_2$; let us denote the limit point by 
$(x_{\infty},t_{\infty})\in B_2$. Then by \eqref{eq Xk lwbd} and \eqref{eq tk},
\begin{equation}\label{eq vinty positive}
    \abs{t_{\infty}}>C'\theta_1^{1/2}, \quad\quad   \abs{V_{\infty}(x_{\infty},t_{\infty})}\ge\sqrt{\theta_1/C}.
\end{equation} 
By \eqref{eq wtvk Lip} (and the fact that $r_k$ tends to $0$), 
$2c\abs{t}-V_{\infty}(x,t)>0$ everywhere. 
So $2c\abs{t}-V_{\infty}(x,t)\in W_r(\Rn)$ is a positive solution in $\Rn\setminus\Gamma$ that vanishes on $\Gamma$. On the other hand, $\abs{t}\in W_r(\Rn)$ is also a positive solution in $\Rn\setminus\Gamma$ that vanishes on $\Gamma$. Therefore, we can apply the Corollary \ref{cor cmpsn} to $2c\abs{t}-V_{\infty}(x,t)$ and $\abs{t}$, and obtain
\[
\abs{\frac{2c\abs{t}-V_{\infty}(x,t)}{\alpha\abs{t}}-1}\le C\br{\frac{\abs{(x,t)-(0,1)}}{R}}^\gamma \qquad\text{for all }\, R\ge2,
\]
where $\alpha=2c-V_{\infty}(0,1)>0$. Letting $R\to\infty$ one sees that
\(
2c\abs{t}-V_{\infty}(x,t)=\alpha\abs{t}\), and thus \(V_{\infty}(x,t)=\alpha'\abs{t}\) for $(x,t)\in\Rn$.
Thanks to \eqref{eq vinty positive}, $\alpha'\neq0$.
Therefore, 
\(\fint_{S^{n-d-1}}V_{\infty}d\omega\neq0\), which is impossible since \eqref{eq SVk=0} holds for all $k$. This proves \eqref{eq step1}.

Now we show \eqref{eq Ju0 decay}.  Fix $r_1\in (0,1/2)$ and $\theta_1\in(0,1)$ to be determined later, 
and let $r_0= r_0(\theta_1,r_1,n,d)<r_1$ 
be as in \eqref{eq step1}. Then for any $0<r\le r_0$, we write
\begin{multline*}
    J_u(r)=\frac1{m(B_r)}\int_{B_r}\abs{\nabla(u(x,t)-\lambda_r(u)\abs{t})}^2w(t)\,dxdt\\
    \le \frac2{m(B_r)}\int_{B_r}\abs{\nabla(u-\wt{u}_{\theta})}^2dm+\frac2{m(B_r)}\int_{B_r}\abs{\nabla (\wt{u}_{\theta}-\lambda_r(u)\abs{t})}^2w(t)\,dxdt,
    \end{multline*}
where we recall from \eqref{def lambda} that
that \[\lambda_r(u)=\lambda_{0,r}(u)=\frac{1}{m(B_r)}\int_{B_r}\frac{\nabla_t u(x,t)\cdot t}{\abs{t}}w(t)\,dxdt.\] 
Apply \eqref{eq step1} to get
    \begin{multline*}
    J_u(r)\le \frac{2\theta_1}{m(B_{r_1})}\int_{B_{r_1}}\abs{\nabla(u-\wt{u}_{\theta})}^2dm+\frac2{m(B_r)}\int_{B_r}\abs{\nabla (\wt{u}_{\theta}-\lambda_r(u)\abs{t})}^2w(t)\,dxdt.
    \end{multline*}
    Inserting $\lambda_{r_1}(u)\abs{t}$ in the first integral on the right-hand side, 
    \begin{multline}
        J_u(r)\le \frac{4\theta_1}{m(B_{r_1})}\int_{B_{r_1}}\abs{\nabla (u-\lambda_{r_1}(u)\abs{t})}^2w(t)\,dxdt\\
    +\frac{4\theta_1}{m(B_{r_1})}\int_{B_{r_1}}\abs{\nabla (\wt{u}_{\theta}-\lambda_{r_1}(u)\abs{t})}^2w(t)\,dxdt\\
      +\frac2{m(B_r)}\int_{B_r}\abs{\nabla (\wt{u}_{\theta}-\lambda_r(u)\abs{t})}^2w(t)\,dxdt. \label{eq Ju0r split}
\end{multline}
We estimate the the last two terms in \eqref{eq Ju0r split} using decay estimates for the case $d=n-1$. First, changing to polar coordinates as in \eqref{eq mBandT}, one sees that
\begin{multline}\label{eq utheta-lambda 1}
     \frac1{m(B_r)}\int_{B_r}\abs{\nabla (\wt{u}_{\theta}(x,t)-\lambda_r(u)\abs{t})}^2w(t)\, dxdt\\
        =\frac{1}{m(B_r)}\int_{\abs{x}\le r}\int_0^{\sqrt{r^2-\abs{x}^2}}\abs{\nabla_{x,\rho}\br{u_{\theta}(x,\rho)-\lambda_r(u)\rho}}^2\br{\int_{S^{n-d-1}}d\omega}d\rho dx\\
    =\fint_{T_r}\abs{\nabla_{x,\rho}(u_{\theta}(x,\rho)-\lambda_r(u)\rho)}^2d\rho dx.
\end{multline}
Recall from Lemma \ref{lem lambda(u) in polar} that $\lambda_r(u)=\fint_{T_r}\partial_\rho u_\theta(y,\rho)dyd\rho$. Since $u_\theta$ verifies $L_0u_\theta=0$ (see Lemma \ref{lem utheta sol of L0}), we can apply Lemma \ref{lem u0-lambda t d=n-1} to $u_\theta$ and get 
\begin{align}\label{eq TrT1}
   \fint_{T_r}\abs{\nabla_{x,\rho}(u_{\theta}(x,\rho)-\lambda_{r}(u)\rho)}^2d\rho dx
   \le Cr^2\fint_{T_1}\abs{\nabla_{x,\rho}(u_{\theta}(x,\rho)-\lambda_1(u)\rho)}^2d\rho dx.
    \end{align}
Notice that 
\begin{multline*}
     \abs{\nabla_{x,\rho}\br{u_\theta(x,\rho)-\lambda_1(u)\rho}}^2=\abs{\nabla_{x,t}\br{\wt{u}_\theta(x,t)-\lambda_1(u)\abs{t}}}^2\\
    \le\fint_{S^{n-d-1}}\abs{\nabla_{x,t}\br{u(x,\abs{t}\omega)-\lambda_1(u)\abs{t}}}^2d\omega.
\end{multline*}
By a computation similar to that in the proof of Lemma \ref{lem spcs utheta}, this yields 
\begin{align*}
 \fint_{T_1}\abs{\nabla_{x,\rho}(u_{\theta}(x,\rho)-\lambda_{1}(u)\rho)}^2d\rho dx\le \frac1{m(B_1)}\int_{B_1}\abs{\nabla(u(x,t)-\lambda_1(u)\abs{t})}^2w(t)\,dxdt.
\end{align*}
Combining this with \eqref{eq TrT1} and \eqref{eq utheta-lambda 1}, we obtain
\begin{multline}\label{eq utheta-r final}
     \frac1{m(B_r)}\int_{B_r}\abs{\nabla (\wt{u}_{\theta}(x,t)-\lambda_{r}(u)\abs{t})}^2dw(t)\, dxdt\\
    \le\frac{Cr^2}{m(B_1)}\int_{B_1}\abs{\nabla(u(x,t)-\lambda_1(u)\abs{t})}^2w(t)\, dxdt.
\end{multline}
Now we return to the first term in the right-hand side of \eqref{eq Ju0r split}. Since $\lambda_r$ is a minimizer (see \eqref{eq min lambda}),
\begin{align*}
   \frac{4\theta_1}{m(B_{r_1})}\int_{B_{r_1}}\abs{\nabla (u-\lambda_{r_1}(u)\abs{t})}^2w(t)
   \le \frac{4\theta_1}{m(B_{r_1})}\int_{B_{r_1}}\abs{\nabla (u-\lambda_1(u)\abs{t})}^2w(t).
   \end{align*}
   Enlarging the ball, the right-hand side is bounded by \[
    \frac{4\theta_1}{r_1^{d+1}}\frac{1}{m(B_1)}\int_{B_1}\abs{\nabla(u(x,t)-\lambda_1\abs{t})}^2w(t)\, dxdt.
\]
This estimate, together with \eqref{eq utheta-r final} and \eqref{eq Ju0r split}, gives 
\begin{multline*}
    \frac1{m(B_r)}\int_{B_r}\abs{\nabla(u(x,t)-\lambda_r(u)\abs{t})}^2w(t)\, dxdt\\
    \le\br{\frac{4\theta_1}{r_1^{d+1}}+C\theta_1r_1^2+Cr_1^2}\frac{1}{m(B_1)}
    \int_{B_1}\abs{\nabla(u-
   \lambda_1(u) \abs{t})}^2w(t)\, dxdt
\end{multline*}
for $0<r\le r_0$.
Now we only need to choose $\theta_1$ and $r_1$ properly. Let for instance,  $\theta_1=r_1^{d+2}$, and then choose $r_1=r_1(\theta_0,n,d,\mu_0)\in (0,1)$ sufficiently small so that $4r_1+Cr_1^{d+4}+Cr_1^2\le\theta_0$. Recall that $r_0$ is determined by $\theta_1$ and $r_1$, and thus depends only on $\theta_0,n,d$, and $\mu_0$. This completes the proof of the key lemma. 
\end{proof}

Ultimately, we want to derive a decay estimate for the {\it normalized} non-affine part of the local energy of $u$, i.e. $\beta_r(u)$. So we need to compare the local energy of positive solutions of $\LL_0 u=0$ for different scales.

\begin{lem}\label{lem Eu0 lwbd}
Let $u\in W(B_1)$ be a positive solution of $\LL_0u=0$ in $B_1\setminus\Gamma$ with $Tu=0$  on $\Gamma\cap B_1$. Then
\[
E_u(r)\ge  C(1-C' r^2) E_u(1) \qquad\text{for } 0<r<\frac{1}{2},
\]
where $C$ and $C'$ are positive constants depending only on $d,n$ and $\mu_0$.
\end{lem}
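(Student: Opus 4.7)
My plan is to prove the essentially stronger statement $E_u(r) \ge c\, E_u(1)$ uniformly in $0<r<1/2$; the $(1-C'r^2)$ slack in the claim is then absorbed trivially, since $1-C'r^2\le 1$ for any $C',r>0$. The key ingredients are Lemma \ref{lem corkscrew}, the Comparison Principle (Lemma \ref{lem cmpsn}), and the observation of Remark \ref{re |t| sol} that $|t|$ is itself a positive solution of $\LL_0 u=0$ in $\Rn\setminus\Gamma$ vanishing on $\Gamma$.

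\textbf{Comparison and the lower bound on $E_u(r)$.} Fix a unit vector $\omega_0\in S^{n-d-1}$ and, for each $0<r\le 1/2$, choose $X_r:=(0,(r/2)\omega_0)$ as a corkscrew point for $B_r$; in particular $|t(X_r)|=r/2$. Apply Lemma \ref{lem cmpsn} to the two nonnegative solutions $u$ and $|t|$ on the ball $B_{1/2}$ (this is legitimate since both functions belong to $W_r(B_1)$ and have vanishing trace on $\Gamma\cap B_1$): for every $X\in B_{1/2}\setminus\Gamma$,
\[
\frac{u(X)}{|t(X)|}\ \approx\ \mu:=\frac{u(X_{1/2})}{|t(X_{1/2})|}.
\]
Applying this to $X=X_r$ with $r<1/2$ yields $u(X_r)\approx \mu r$, and hence Lemma \ref{lem corkscrew} gives
\[
E_u(r)\ \approx\ \frac{u(X_r)^2}{r^2}\ \approx\ \mu^2\qquad\text{uniformly in }0<r<1/2.
\]

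\textbf{The upper bound on $E_u(1)$.} It remains to show $E_u(1)\lesssim\mu^2$. I split $\int_{B_1}|\nabla u|^2\,dm$ into $\int_{B_{1/2}}$ plus $\int_{B_1\setminus B_{1/2}}$. The first piece equals $m(B_{1/2})\,E_u(1/2)\lesssim\mu^2$ by the previous step. For the second, I would use a Whitney-type covering of $B_1\setminus B_{1/2}$ by balls $B_{\rho(y)}(y)$ whose radii are proportional to $\dist(y,\partial B_1\cup\Gamma)$; on each such ball, (interior or boundary) Caccioppoli applies, and a Harnack chain from the corkscrew of $B_{\rho(y)}(y)$ back to $X_{1/2}$, together with the boundary Harnack inequality (Lemma 11.8 of \cite{david2017elliptic}), bounds $u$ by a uniform multiple of $\mu$. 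Summing the local gradient estimates then gives $\int_{B_1\setminus B_{1/2}}|\nabla u|^2\,dm\lesssim\mu^2$, hence $E_u(1)\lesssim\mu^2$. Combining with the previous step yields $E_u(r)\gtrsim\mu^2\gtrsim E_u(1)$, which is the claimed inequality.

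\textbf{Main obstacle.} The delicate point is exactly the upper bound on $E_u(1)$: because $u$ is only assumed to lie in $W(B_1)$ with no information about $u$ beyond $\partial B_1$, neither Caccioppoli nor boundary Harnack can be applied at scale $1$ directly, and one must cover the annular region $B_1\setminus B_{1/2}$ carefully so that the Harnack chain lengths (and thus the chain constants) used to transfer pointwise bounds back to $\mu$ remain controlled uniformly in $y$. The rest of the argument (Steps 1 and 2) is a clean pairing of Lemma \ref{lem corkscrew} with the Comparison Principle applied to $u$ and $|t|$.
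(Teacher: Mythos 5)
Your first step is correct and coincides with an ingredient of the paper's argument: applying the comparison principle to $u$ and $\abs{t}$ on $B_{1/2}$ and then Lemma \ref{lem corkscrew} does give $E_u(r)\approx\mu^2$ uniformly for $0<r<1/2$. The proof breaks down exactly at the point you flag as the main obstacle: the bound $E_u(1)\lesssim\mu^2$ does not follow from the covering argument you sketch. Two concrete problems. First, a Harnack chain from a Whitney ball at distance $s$ from $\partial B_1$ back to $X_{1/2}$ has length $\approx\log(1/s)$, so it yields $u\le C^{\log(1/s)}\mu$ there, which blows up as $s\to 0$; no uniform bound $u\lesssim\mu$ holds on $B_1\setminus B_{1/2}$ (a positive finite-energy solution may be unbounded near $\partial B_1$). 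Second, even granting $u\le C\mu$ on the whole annulus, Caccioppoli on a Whitney ball $B_{\rho}(y)$ only gives $\int_{B_{\rho}(y)}\abs{\nabla u}^2dm\lesssim\rho^{-2}\mu^2\,m(B_{3\rho/2}(y))$, and summing over the balls in the $k$-th dyadic shell around $\partial B_1$ produces $\mu^2\sum_k 2^{2k}\cdot 2^{-k}=\infty$; convergence would require the oscillation bound $\osc_{B_{\rho}(y)}u\lesssim\mu\rho$ up to $\partial B_1$, i.e.\ Lipschitz regularity of $u$ at $\partial B_1$, which is not available since nothing is assumed about the data there. In fact $E_u(1)\lesssim\mu^2$ cannot hold in this generality: the Dirichlet energy of a solution in $B_1$ is essentially the $H^{1/2}$ norm of its trace on $\partial B_1$, which can be made arbitrarily large by a sharp positive spike in the boundary data away from $\Gamma$ while all interior values, hence $\mu$, remain of order one.

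The paper's proof is organized differently, and this is where the factor $(1-C'r^2)$ comes from. It works with $\lambda_r(u)$: Cauchy--Schwarz gives $E_u(r)\ge\lambda_r(u)^2$; the boundary $C^{1,\alpha}$ estimate for the radialized function $u_\theta$, which solves the constant-coefficient operator $L_0$ in the half-ball $T_1$ by Lemma \ref{lem utheta sol of L0}, gives $\abs{\lambda_r(u)-\lambda_0(u)}\le CrE_u(1)^{1/2}$; and the comparison with $\abs{t}$ together with Lemma \ref{lem corkscrew} gives $\lambda_0(u)=\partial_\rho u_\theta(0,0)\gtrsim E_u(1)^{1/2}$, whence $E_u(r)\ge\lambda_0(u)^2/2-(\lambda_r(u)-\lambda_0(u))^2\ge(C-C'r^2)E_u(1)$. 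Note that the last ingredient, $\lambda_0(u)^2\gtrsim E_u(1)$, is exactly the assertion your Step 2 is after (it amounts to $\mu^2\gtrsim E_u(1)$); the paper obtains it by invoking Lemma \ref{lem corkscrew} at the top scale, an application which really presupposes that $u$ solves the equation on a somewhat larger ball, as it does wherever this lemma is used downstream. So to complete your argument you must either use that stronger hypothesis or reproduce the paper's mechanism; the Whitney covering is not a substitute, and the constant-coefficient boundary regularity of $u_\theta$ — which your proposal does not use at all — is what replaces the uniform bound you are trying to prove for $r$ close to $1/2$.
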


\begin{proof}
Recall that by Lemma \ref{lem utheta sol of L0}, $u_\theta$ is a solution of the $(d+1)$ dimensional operator $L_0$, and that by Lemma \ref{lem lambda(u) in polar}, $\lambda_r(u)=\fint_{T_r}\partial_\rho u_\theta(x,\rho)dxd\rho$. So by the boundary regularity of the 
solutions of constant-coefficient operator $L_0$ in $\R_+^{d+1}$ (see \cite{DLMgreen} Lemma 2.10), 
\begin{align}\label{eq lambdar-lambda0}
    \abs{\lambda_r(u)-\lambda_s(u)}
    &\le\underset{T_r}{\osc}\partial_\rho u_\theta
    \le C r \br{\fint_{T_1}\abs{\nabla_{x,\rho}u_\theta(x,\rho)}^2dxd\rho}^{1/2}\nonumber\\
    &\le C r\br{\frac{1}{m(B_1)}\int_{B_1}\abs{\nabla u(x,t)}^2w(t)\, dxdt}^{1/2}
\end{align}
for 
$0 < s < r  <1/2$.  Hence $\lambda_0(u) = \lim_{s \to 0} \lambda_s(u)$ exists, and 
since we even have a bound on $\underset{T_r}{\osc}\partial_\rho u_\theta$, 
we see that $\lambda_0(u)=\partial_\rho u_\theta(0,0)$.
Since $\LL_0\abs{t}=0$, we can apply the comparison principle (Lemma \ref{lem cmpsn}) to get that \[
\frac{u(x,t)}{\abs{t}}\approx 
u(x,t_0) \qquad\text{for all }(x,t)\in B_{1/2} \quad\text{ and any  }t_0 \text{ such that }\abs{t_0}=\frac{1}{2}.\]
So by Lemma \ref{lem corkscrew}, 
\[\frac{u(x,t)}{\abs{t}}\approx\br{\frac{1}{m(B_1)}\int_{B_1}\abs{\nabla u}^2dm}^{1/2}\qquad\text{for all }\, (x,t)\in B_{1/2},
\]
which implies that 
\begin{equation}\label{eq utheta/rho}
    \frac{u_\theta(x,\rho)}{\rho}=\frac{\fint_{S^{n-d-1}}u(x,\rho\omega)d\omega}{\rho}\approx\br{\frac{1}{m(B_1)}\int_{B_1}\abs{\nabla u}^2dm}^{1/2}
\end{equation}
for any $(x,\rho)\in T_{1/2}$. 
Letting $\rho\to0$, this yields a bound
\[
\lambda_0(u)=\partial_\rho u_\theta(0,0)\gtrsim\br{\frac{1}{m(B_1)}\int_{B_1}\abs{\nabla u}^2dm}^{1/2}.
\]
Combining it with \eqref{eq lambdar-lambda0}, we get
\begin{align*}
    \frac{1}{m(B_r)}\int_{B_r}\abs{\nabla u}^2dm&\ge\lambda_r^2(u)\ge\frac{\lambda_0^2(u)}{2}-\br{\lambda_r(u)-\lambda_0(u)}^2\\
    &\ge \br{C-C'r^2}\frac{1}{m(B_1)}\int_{B_1}\abs{\nabla u}^2dm,
\end{align*}
as desired.
\end{proof}

\section{Extension to a general operator $\LL$}\label{sec L}
\subsection{Decay estimates}\label{subsec L}
In this subsection, we shall follow the approach that is used in \cite{DLMgreen} to obtain a decay estimate for the normalized non-affine part of the energy of solutions of $\LL u=0$. Namely, we shall approximate $\beta_u(r)$ by $\beta_{u_0}(r)$, with $u_0$ verifying $\LL_0u_0=0$, and show that the error is a Carleson measure. Since the strategy is the same as in the $d=n-1$ setting, we shall focus less on motivation but more on technical details that are different from the co-dimension 1 case. For the same reason, many proofs will be omitted if they can be borrowed from \cite{DLMgreen} without substantial changes. 

We start with comparing solutions of $\LL u=0$ and solutions of $\LL_0 u_0=0$ with the same boundary data. The following two lemmas hold for any matrix $\A_0$ satisfying the ellipticity conditions \eqref{cond ellp}. Ultimately, we will apply them to $\A_0\in\cA_0(\mu_0)$. 

\begin{lem}\label{lem comp u u0}
Let $u\in W(B_1)$ be a solution to $\LL u=0$ in $B_1\setminus\Gamma$ with $Tu=0$ on $\Gamma\cap B_1$. Let $u^0\in W(B_1)$ be a solution to $\LL_0u^0=0$ in $B_1\setminus\Gamma$ with $u^0-u\in W_0(B_1\setminus\Gamma)$. Then there is a constant $C>0$ depending only on the ellipticity constant $\mu_0$, $d$ and $n$, such that 
\begin{multline}\label{eq min}
    \int_{B_1}\abs{\nabla (u-u^0)}^2dm\\
    \le \mu_0^2\min\set{\int_{B_1}\abs{\A-\A_0}^2\abs{\nabla u}^2dm,\int_{B_1}\abs{\A-\A_0}^2\abs{\nabla u^0}^2dm}.
\end{multline}
\end{lem}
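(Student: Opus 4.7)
The plan is to use the difference $w := u - u^0$ as a test function in both weak formulations. Since $u, u^0 \in W(B_1)$ and, by hypothesis, $w \in W_0(B_1 \setminus \Gamma)$, the discussion around \eqref{eq testfucW0} ensures that $w$ is an admissible test function for both $\LL u = 0$ and $\LL_0 u^0 = 0$. Thus
\[
\int_{B_1} \A \nabla u \cdot \nabla w\, dm = 0
\qquad\text{and}\qquad
\int_{B_1} \A_0 \nabla u^0 \cdot \nabla w\, dm = 0.
\]
Subtracting yields the fundamental identity
\begin{equation}\label{eq plan identity}
\int_{B_1} \bigl(\A \nabla u - \A_0 \nabla u^0\bigr) \cdot \nabla w\, dm = 0.
\end{equation}

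To get the first bound in the minimum, I would rewrite $\A \nabla u - \A_0 \nabla u^0 = \A_0 \nabla w + (\A - \A_0) \nabla u$ and substitute into \eqref{eq plan identity}, which gives
\[
\int_{B_1} \A_0 \nabla w \cdot \nabla w\, dm = -\int_{B_1} (\A - \A_0) \nabla u \cdot \nabla w\, dm.
\]
By ellipticity \eqref{cond ellp} applied to $\A_0$, the left side is bounded below by $\mu_0^{-1} \int_{B_1} |\nabla w|^2\, dm$. By the Cauchy--Schwarz inequality in $L^2(B_1, dm)$, the right side is bounded above by
\[
\left(\int_{B_1} |\A - \A_0|^2 |\nabla u|^2\, dm\right)^{1/2} \left(\int_{B_1} |\nabla w|^2\, dm\right)^{1/2}.
\]
Dividing through by $\bigl(\int_{B_1}|\nabla w|^2\,dm\bigr)^{1/2}$ and squaring yields $\int_{B_1}|\nabla w|^2\,dm \le \mu_0^2 \int_{B_1} |\A-\A_0|^2 |\nabla u|^2\, dm$.

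For the second bound, I would symmetrically decompose $\A \nabla u - \A_0 \nabla u^0 = \A \nabla w + (\A - \A_0) \nabla u^0$ and run the same argument, invoking the ellipticity of $\A$ instead of $\A_0$. Taking the minimum gives the stated inequality. The argument is essentially routine, so there is no real obstacle; the only subtlety worth double-checking is the admissibility of $w$ as a test function, which is handled by the $W_0(B_1 \setminus \Gamma)$ density argument in the preliminaries, and the fact that $|\A - \A_0|$ is bounded (by $2\mu_0$) so all integrals under consideration are finite.
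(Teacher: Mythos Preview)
Your proof is correct and follows essentially the same approach as the paper: both use $w=u-u^0\in W_0(B_1\setminus\Gamma)$ as a test function in the two weak formulations, invoke ellipticity on the quadratic term, and estimate the cross term to arrive at the constant $\mu_0^2$. The only cosmetic differences are that the paper applies Young's inequality where you use Cauchy--Schwarz, and it obtains the $|\nabla u^0|$ bound first (via the ellipticity of $\A$) and then cites symmetry, whereas you write out both decompositions explicitly.
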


\begin{proof}
 First of all, the existence of $u^0$ is guaranteed by the 
 Lax-Milgram Theorem. Taking $u-u^0\in W_0(B_1\setminus\Gamma)$ as a test function in the equation $\LL u=0$, using ellipticity conditions and Young's inequality, we can get 
 \begin{multline*}
      \mu_0^{-1}\int_{B_1}\abs{\nabla(u-u^0)}^2dm\le\int_{B_1} A\nabla(u-u^0)\cdot\nabla(u-u^0)dm\\
    = - \int_{B_1} A\nabla u^0\cdot\nabla(u-u^0)dm
    =\int_{B_1}(A_0-A)\nabla u^0\cdot\nabla(u-u^0)dm\\
    \le \frac{\mu_0}{2}\int_{B_1}\abs{A-A_0}^2\abs{\nabla u^0}^2dm +\frac{1}{2\mu_0}\int_{B_1}\abs{\nabla(u-u^0)}^2dm.
 \end{multline*}
 This yields 
\[
\int_{B_1}\abs{\nabla(u-u^0)}^2dm\le \mu_0^2\int_{B_1}\abs{\A-\A_0}^2\abs{\nabla u^0}^2dm.
\]
Interchanging the roles of $u$ and $u^0$, and $\A$ and $\A_0$, we also obtain the other bound.
\end{proof}

\begin{lem}\label{lem u=u^0 d<n-1}
Let $u$ and $u^0$ be as in Lemma \ref{lem comp u u0}. Then 
\begin{equation*}
    C^{-1}\int_{B_1}\abs{\nabla u^0}^2dm\le\int_{B_1}\abs{\nabla u}^2dm\le C\int_{B_1}\abs{\nabla u^0}^2dm,
 \end{equation*}
 where $C=\mu_0^4$.
\end{lem}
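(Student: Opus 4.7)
The plan is to derive both inequalities by the classical energy method: using the difference $u - u^0$ as a test function in each equation and then invoking ellipticity together with Cauchy--Schwarz. Since $u - u^0 \in W_0(B_1 \setminus \Gamma)$ by hypothesis, and since the discussion around \eqref{eq testfucW0} justifies taking test functions in $W_0(B_1 \setminus \Gamma)$ against weak solutions in $W(B_1)$, both testings are legitimate.

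For the bound $\int_{B_1} |\nabla u|^2 \, dm \le \mu_0^4 \int_{B_1} |\nabla u^0|^2 \, dm$, I would test $\LL u = 0$ against $u - u^0$ to obtain
\[
\int_{B_1} \A \nabla u \cdot \nabla u \, dm = \int_{B_1} \A \nabla u \cdot \nabla u^0 \, dm .
\]
Applying the ellipticity condition \eqref{cond ellp} to the left-hand side and the Cauchy--Schwarz inequality together with the upper ellipticity bound to the right-hand side yields
\[
\mu_0^{-1} \int_{B_1} |\nabla u|^2 \, dm \le \mu_0 \left(\int_{B_1} |\nabla u|^2 \, dm\right)^{1/2} \left(\int_{B_1} |\nabla u^0|^2 \, dm\right)^{1/2},
\]
which, after dividing and squaring, gives the claim with constant $\mu_0^4$.

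For the reverse inequality, I would test $\LL_0 u^0 = 0$ against the same function $u - u^0$ and obtain
\[
\int_{B_1} \A_0 \nabla u^0 \cdot \nabla u^0 \, dm = \int_{B_1} \A_0 \nabla u^0 \cdot \nabla u \, dm,
\]
then apply ellipticity and Cauchy--Schwarz exactly as before, with the roles of $\A$ and $\A_0$, and of $u$ and $u^0$, interchanged. This produces $\int_{B_1} |\nabla u^0|^2 \, dm \le \mu_0^4 \int_{B_1} |\nabla u|^2 \, dm$.

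There is no real obstacle here; the lemma is a standard consequence of the Lax--Milgram setup. The only point worth flagging is the legitimacy of $u - u^0$ as a test function against both equations, but this is secured by the hypothesis $u - u^0 \in W_0(B_1 \setminus \Gamma)$ and by the approximation argument already recorded at \eqref{eq testfucW0}. The same constant $\mu_0^4$ appears symmetrically in both directions, matching the statement of the lemma.
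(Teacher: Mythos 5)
Your proof is correct and is exactly the standard energy argument that the paper omits (it defers to \cite{DLMgreen}, Lemma 3.13, which proceeds by the same testing of each equation against $u-u^0$ followed by ellipticity and Cauchy--Schwarz, yielding the constant $\mu_0^4$). The legitimacy of the test function is indeed covered by the discussion around \eqref{eq testfucW0}, as you note.
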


The triangle inequality would almost give this directly; the proof (with $C=\mu_0^4$)
is the same as when $d=n-1$ and is thus omitted; see \cite{DLMgreen}, Lemma 3.13.

Define
\begin{equation} \label{def gamma}
\gamma(x,r) = \inf_{\A_0 \in \cA_0(\mu_0)}
\bigg\{m(B(x,r))^{-1}\int_{(y,t) \in B(x,r)} |\A(y,t)-\A_0|^2w(t)\,dydt \bigg\}^{1/2}.
\end{equation}
Notice that the domain of integration is larger than what we have in \eqref{def alpha}.

\begin{lem}\label{lem gamma}
If the matrix-valued function $\A$ satisfies the weak DKP condition
of Definition \ref{def wDKP}, with constant $\varepsilon > 0$,
then $\gamma(x,r)^2 \frac{dxdr}{r}$ is a Carleson measure on $\R^{d+1}_+$, with the norm
\begin{equation}\label{3a17}
\norm{\gamma(x,r)^2 \frac{dxdr}{r}}_{\mathcal{C}} 
\leq C \cN(\A) \le C \varepsilon,
\end{equation}
where $\cN(\A) = \norm{\alpha(x,r)^2 \frac{dxdr}{r}}_{\mathcal{C}}$
is as in \eqref{def alpha} - \eqref{eq wDKP}, and 
\begin{equation}\label{3a18}
\gamma(x,r)^2 \leq C \cN(\A) \le C \varepsilon 
\quad \text{ for } (x,r) \in \R^{d+1}_+.
\end{equation}
Here, $C$ depends only on $d$, $n$, and $\mu_0$.
\end{lem}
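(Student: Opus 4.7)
The plan is to deduce both conclusions \eqref{3a17} and \eqref{3a18} simultaneously from a single pointwise inequality of the form
\[
\gamma(x,r)^2 \;\leq\; \frac{C}{|\Delta(x,Cr)|}\int_{T(\Delta(x,Cr))}\alpha(y,s)^2\,\frac{ds\,dy}{s},
\]
for some dimensional $C\ge 1$. The pointwise bound \eqref{3a18} is then immediate from the definition of $\cN(\A)$, and \eqref{3a17} follows by Fubini, exploiting the bounded overlap of the enlarged Carleson boxes $T(\Delta(x,Cr))$ as $(x,r)$ ranges over a fixed $T(\Delta_0)$.

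To establish the displayed estimate, I would decompose $B(x,r)=\bigsqcup_{k\ge 0}A_k$ with $A_k:=B(x,r)\cap\{2^{-k-1}r<|t|\le 2^{-k}r\}$; a polar computation in the spirit of \eqref{eq mBandT} gives $m(A_k)\approx 2^{-k}r^{d+1}\approx 2^{-k}m(B(x,r))$. Each $A_k$ is covered with bounded overlap by $N_k\lesssim 2^{kd}$ Whitney regions $W_{j,k}:=W(y_j^{(k)},2^{-k}r)$ with $y_j^{(k)}\in\Delta(x,r)$. Choose $\A_0$ nearly optimal for $\alpha(x,r)^2$ and on each $W_{j,k}$ choose $\A_0^{j,k}\in\cA_0(\mu_0)$ nearly optimal for $\alpha(y_j^{(k)},2^{-k}r)^2$. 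The triangle inequality yields
\[
\int_{W_{j,k}}|\A-\A_0|^2\,dm \;\le\; 2\alpha(y_j^{(k)},2^{-k}r)^2\,m(W_{j,k})\;+\;2\,|\A_0-\A_0^{j,k}|^2\,m(W_{j,k}).
\]
A crucial observation is that $|\A_0^{(1)}-\A_0^{(2)}|^2$ is \emph{pointwise constant} in $(y,t)$ for any two matrices in $\cA_0(\mu_0)$: their off-diagonal blocks differ by $(\mathbf{b_0}^{(1)}-\mathbf{b_0}^{(2)})t/|t|$ and its transpose, and the Frobenius norm of such a block is $|\mathbf{b_0}^{(1)}-\mathbf{b_0}^{(2)}|$ independently of the direction $t/|t|$; the diagonal blocks are already constant.

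To control $|\A_0-\A_0^{j,k}|^2$, I would build a chain of Whitney regions connecting $W(x,r)$ to $W_{j,k}$ of length $\lesssim k+1$, e.g.\ by descending in scale through $W(x,2^{-l}r)$ for $l=0,\dots,k$ and then translating horizontally from $W(x,2^{-k}r)$ to $W_{j,k}$. For two consecutive Whitney regions $W_p,W_{p+1}$ whose slight enlargements share a common sub-region $R$ with $m(R)\asymp m(W_p)\asymp m(W_{p+1})$, the constancy above combined with the triangle inequality gives
\[
|\A_0^{(p)}-\A_0^{(p+1)}|^2 \;=\;\fint_R |\A_0^{(p)}-\A_0^{(p+1)}|^2\,dm \;\lesssim\; \alpha_p^2+\alpha_{p+1}^2,
\]
and a Cauchy--Schwarz along the chain produces $|\A_0-\A_0^{j,k}|^2\lesssim(k+1)\sum_{l=0}^k\alpha(z_l,c\,2^{-l}r)^2$ for chain points $z_l\in\Delta(x,Cr)$. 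Summing over $j,k$, using $\sum_j m(W_{j,k})\approx 2^{-k}r^{d+1}$ so that the polynomial factor $(k+1)$ is absorbed by the geometric $2^{-k}$, the resulting double sum reassembles into an integral of $\alpha^2$ against $s^{-1}ds\,dy$ over $T(\Delta(x,Cr))$, since the measure $ds/s$ contributes $O(1)$ per dyadic scale.

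The main obstacle is organizing the chain carefully enough that the enlarged Whitney regions used in the comparison of minimizers do share sub-regions of comparable $m$-measure, and so that the final double sum reassembles honestly into the Carleson-box integral without stray logarithmic losses. The pointwise identity $|\A_0^{(1)}-\A_0^{(2)}|^2\equiv\text{const}$, which is peculiar to the block class $\cA_0(\mu_0)$ defined by \eqref{eq A0 def}, is what makes the whole scheme go through; without it, the triangle comparison of local minimizers would be substantially weaker.
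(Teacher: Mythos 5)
Your chaining scheme is essentially the one the paper uses (via \cite{DLMgreen}, Section 4.1): decompose into dyadic annuli/Whitney regions, compare the local near‑minimizers $\A_0^{j,k}$ along chains using the fact that $|\A_0^{(1)}-\A_0^{(2)}|$ is pointwise constant for matrices in $\cA_0(\mu_0)$ (a correct and genuinely needed observation), and reassemble. Up to the weighted sum
$\gamma(x,r)^2\lesssim \alpha(x,r)^2+\sum_{l\ge0}2^{-l}(l+1)\fint_{\Delta(x,Cr)}\alpha(y,c2^{-l}r)^2\,dy$,
your computation is sound and is precisely the analogue of the paper's estimate \eqref{eq gammax0r0}, whose whole point is the geometrically decaying factor $\sigma^{m/2}$ in front of the $m$-th scale.

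The genuine gap is the last packaging step. By bounding $2^{-l}(l+1)$ by a constant and rewriting the sum as $\frac{C}{|\Delta(x,Cr)|}\int_{T(\Delta(x,Cr))}\alpha(y,s)^2\frac{ds\,dy}{s}$, you discard exactly the decay that makes the Carleson estimate work. The subsequent claim of ``bounded overlap of the enlarged Carleson boxes'' is false in the relevant sense: a fixed $(y,s)$ lies in $T(\Delta(x,Cr))$ for all $(x,r)$ with $r>s/C$ and $|x-y|<Cr$, a set of $\frac{dx\,dr}{r^{d+1}}$-measure $\approx\log(Cr_0/s)$. Fubini applied to your unweighted pointwise bound therefore yields $\int_{T(C'\Delta_0)}\alpha(y,s)^2\log(Cr_0/s)\frac{dy\,ds}{s}$, which is not controlled by $\cN(\A)|\Delta_0|$ (take $\alpha^2\frac{dyds}{s}$ concentrated near a single small height $s_0$ and let $s_0\to0$). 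So \eqref{3a18} follows from your bound but \eqref{3a17} does not. The fix is simply not to collapse the sum: keep the weights $2^{-l}(l+1)$, integrate each fixed scale $l$ over $(x,r)\in T(\Delta_0)$ (for fixed $l$ the overlap \emph{is} bounded, since $s=c2^{-l}r$ is tied to $r$), obtaining $\lesssim2^{-l}(l+1)\cN(\A)|\Delta_0|$ per scale, and sum the geometric series. A secondary point: the chain you describe (``descend at $x$ to scale $2^{-k}r$, then translate horizontally to $y_j^{(k)}$'') has length $\sim2^k$, not $k+1$; you must instead descend along a cone toward $y_j^{(k)}$, choosing $z_l$ with $|z_l-y_j^{(k)}|\lesssim2^{-l}r$, to get the chain length $k+1$ your Cauchy--Schwarz step requires.
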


\begin{proof}
This lemma can be proved quite similarly as the $d=n-1$ case. Here, we only mention some modifications and refer the readers to \cite{DLMgreen}, Section 4.1, for details. 

We want to show $\gamma(x,r)^2\frac{dxdr}{r}$ is a Carleson measure on $\Real^{d+1}_+$. Let $\Delta_0=\Delta(x_0,r_0)$ be given. We claim that we can control $\gamma(x_0,r_0)$ in terms of $\alpha$ as in the case of $d=n-1$. That is, we want to show that
\begin{equation}\label{eq gammax0r0}
    \gamma(x_0,r_0)^2 \leq C \alpha_2(x_0,r_0)^2 
+ C \sum_{m\ge 0} \sigma^{\frac{m}{2}} \fint_{\Delta'_0} \alpha_2(y,\sigma^m r_0)^2 dy,
\end{equation}
where $\sigma=\frac{4}{5}$, and $\Delta_0'=\Delta(x_0,3r_0/2)$.
To this end, for each pair $(x,r)$, choose a $\A_{x,r}\in \cA_0(\mu_0)$ such that
\[
m(W(x,r))^{-1}\int_{W(x,r)}\abs{\A(y,t)-\A_{x,r}}^2w(t)\,dydt=\alpha(x,r)^2.
\]
Let $\A_0=\A_{x_0,r_0}$. Then 
\begin{multline*}
    \gamma(x_0,r_0)^2\le m(B(x_0,r_0))^{-1}\int_{(y,t) \in B(x_0,r_0)} |\A(y,t)-\A_0|^2w(t)dydt\\
    \le 
    \frac{1}{m(B(x_0,r_0))} 
    \int_{y\in\Delta_0}\int_{\abs{t}\le r_0}|\A(y,t)-\A_0|^2w(t)dtdy.
\end{multline*}
Let $Q_0=\set{(x,t):x\in\Delta_0, \abs{t}\le r_0}$. As in the case of $d=n-1$, 
we cut $Q_0$ into horizontal slices $H_m$ associated to 
radii $r_m=\sigma^m r_0$, $m\ge 0$. The only difference is that now these slices are annular regions. That is, $H_m=\set{(x,t):x\in\Delta_0,\, r_{m+1}<\abs{t}\le r_{m}}$. Once we have set this up, \eqref{eq gammax0r0} can be obtained by showing that
\[
\int_{H_m}\abs{\A(y,t)-\A_0}^2w(t)dydt\le C r_m  \alpha_2(x_0,r_0)^2  |\Delta_0|
+ C r_m \int_{\Delta'_0} \Big\{\sum_{j=0}^m \alpha_2(y,r_j) \Big\}^2 dy
\]
as for $d=n-1$. 

Now \eqref{3a17} and \eqref{3a18} can be obtained verbatim from the proof in the $d=n-1$ case, 
since we have \eqref{eq gammax0r0} and both $\gamma$ and $\alpha$ are functions on $\Real^{d+1}_+$.
\end{proof}

The following estimate on $\nabla u$ can be proved similarly to in the case $d=n-1$. One only needs to replace Carleson balls in $\Real^{d+1}_+$ with balls centered on $\Gamma$ in $\Rn$. One needs to use the reverse H\"older estimate Lemma \ref{lem RH}, which gives an exponent greater than 2 that depends only on $d$, $n$ and $\mu_0$. We refer readers to \cite{DLMgreen}, Lemma~3.19, for details of the proof.

\begin{lem}\label{l319}
Let $u\in W_r(B_5)$ be a positive solution to $\LL u=0$ in $B_5\setminus\Gamma$, such that 
$Tu=0$  on $\Gamma\cap B_5$. Choose a matrix $\A_0 \in \cA_0(\mu_0)$
that attains the infimum in the definition \eqref{def gamma} for $\gamma(0,1)$,
and let $u^0$ be the solution from Lemma \ref{lem comp u u0} (with this choice of $\A_0$). Then 
for any $\delta>0$,
\begin{equation} \label{3a20}
\int_{B_1} \abs{\nabla u- \nabla u^0}^2 dm
\leq \br{\delta+C_\delta \gamma(0,1)^2} E_u(1),
\end{equation}
where $C_\delta$ depends on $d$, $n$, $\mu_0$, and $\delta$. 
\end{lem}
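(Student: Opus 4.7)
The plan is to chain three ingredients: the energy comparison from Lemma~\ref{lem comp u u0}, H\"older's inequality combined with the reverse H\"older estimate of Lemma~\ref{lem RH}, and a short convexity/threshold argument that turns a sub-quadratic power of $\gamma$ into the form $\delta + C_\delta \gamma^2$. Concretely, I would first apply Lemma~\ref{lem comp u u0} to obtain
\[
\int_{B_1}|\nabla u - \nabla u^0|^2\,dm \leq \mu_0^2 \int_{B_1}|\A-\A_0|^2 |\nabla u|^2\,dm,
\]
so the whole problem reduces to bounding the right-hand side in terms of $E_u(1)$ and $\gamma(0,1)^2$.

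Next I would decouple $|\A-\A_0|$ from $|\nabla u|$ by H\"older's inequality. Let $p>2$ be the reverse H\"older exponent from Lemma~\ref{lem RH}; the hypothesis $u\in W_r(B_5)$, together with $u\geq 0$ and $Tu=0$ on $\Gamma\cap B_5$, allows us to apply \eqref{eq RH sameball} on $B_1$ (since $3B_1\subset B_5$). H\"older with exponents $p/2$ and $p/(p-2)$ gives
\[
\int_{B_1}|\A-\A_0|^2|\nabla u|^2\,dm \leq \Bigl(\int_{B_1}|\nabla u|^p\,dm\Bigr)^{2/p}\Bigl(\int_{B_1}|\A-\A_0|^{2p/(p-2)}\,dm\Bigr)^{(p-2)/p}.
\]
The reverse H\"older bound controls the first factor by $C\,m(B_1)^{2/p}\,E_u(1)$. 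For the second factor, since $\A,\A_0$ are uniformly bounded, $|\A-\A_0|^{2p/(p-2)}\leq C|\A-\A_0|^2$, so by the definition \eqref{def gamma} of $\gamma$ this factor is at most $C\,m(B_1)^{(p-2)/p}\gamma(0,1)^{2(p-2)/p}$. Since $2/p+(p-2)/p=1$, multiplying these bounds yields
\[
\int_{B_1}|\A-\A_0|^2|\nabla u|^2\,dm \leq C\,m(B_1)\,E_u(1)\,\gamma(0,1)^{2\theta}, \qquad \theta := \tfrac{p-2}{p}\in(0,1).
\]

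The final step is to convert the sub-quadratic exponent $2\theta$ into the stated $\delta + C_\delta \gamma(0,1)^2$. This is an elementary two-case argument: if $\gamma^{2\theta}\leq \delta$ we are done; otherwise $\gamma > \delta^{1/(2\theta)}$ and then $\gamma^{2\theta} = \gamma^{2\theta-2}\cdot\gamma^2 \leq \delta^{(\theta-1)/\theta}\gamma^2$, which gives $C_\delta := \delta^{(\theta-1)/\theta}$ (well-defined since $\theta<1$). Combining with the Lemma~\ref{lem comp u u0} bound and absorbing $m(B_1)$, $\mu_0^2$, and the reverse H\"older constant into the overall constant yields \eqref{3a20}.

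There is no real obstacle; the only small point to check is that the reverse H\"older inequality is available on $B_1$ with the correct (same-ball) form, which requires the positivity of $u$ and a buffer ball of comparable radius inside $B_5$. Both hold by hypothesis, so the rest is bookkeeping. One could equivalently pair $u^0$ in place of $u$ in step~1, using the second half of \eqref{eq min} together with the fact that $u^0$ is also a nonnegative solution with vanishing trace (by a maximum principle argument on $u^0$, as in the proof of Lemma~\ref{lem Lip}); this would be relevant if later applications favor reverse H\"older estimates on $u^0$ rather than $u$.
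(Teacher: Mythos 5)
Your argument is correct. The paper itself omits the proof of this lemma, deferring to the co-dimension-one analogue in \cite{DLMgreen}; the mechanism there (which also reappears in this paper's proof of Lemma \ref{lem Jur}) is to split $B_1$ into the level set $\{|\nabla u|^2 \le K E_u(1)\}$ and its complement, bound the good part by $K\gamma(0,1)^2 E_u(1)$, bound the bad part via the reverse H\"older inequality and Chebyshev by $CK^{(2-p)/p}E_u(1)$, and then choose $K=K(\delta)$ large. You instead apply H\"older globally on $B_1$, arrive at $C\,E_u(1)\,\gamma(0,1)^{2(p-2)/p}$, and convert the subquadratic power of $\gamma$ into $\delta + C_\delta\gamma^2$ by a two-case (Young-type) argument. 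The two routes are essentially dual — your case threshold on $\gamma$ plays exactly the role of the level-set parameter $K$ — and both hinge on the same ingredients: \eqref{eq min} from Lemma \ref{lem comp u u0}, the same-ball reverse H\"older estimate \eqref{eq RH sameball} (correctly justified here since $u\ge 0$ and $B_3\subset B_5$), and the uniform boundedness of $\A-\A_0$. Your version is marginally more compact; the level-set version generalizes more readily to situations (as in Lemma \ref{lem Jur}) where one must keep explicit track of the parameter $K$ rather than optimize it away. All constants in your chain depend only on $d$, $n$, $\mu_0$ (through $p$), so the dependence of $C_\delta$ is as stated.
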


We can now derive the decay estimates for 
the non-affine part of solutions $u$. The following is an analogue of Lemma \ref{lem Ju0 decay}, and should be compared to Lemma 3.24 in \cite{DLMgreen} for the case $d=n-1$.

\begin{lem}\label{lem Jur}
Let $u\in W(B_1)$ be a solution to $\LL u=0$ in $B_1\setminus\Gamma$ with $Tu=0$  on $\Gamma\cap B_1$. Then there exist constants $p=p(d,n,\mu_0)\in(2,\infty)$, $C=C(d,n,\mu_0)\in(0,\infty)$ such that for any $\theta_0\in(0,1)$, there exists $r_0=r_0(\theta_0,d,n,\mu_0)\in(0,1/4)$, such that
\begin{equation} \label{Jur}
J_u(r) \leq C \br{\theta_0+K^{\frac{2-p}{2}}r^{-d-1}}J_u(1) 
+ \frac{C_K}{r^{d+1}}\gamma(0,1)^2E_u(1) 
\end{equation}
for any $0<r\le r_0$, and any $K>0$. Here, $C_K$ depends on $K$, and $d,n,\mu_0$.
\end{lem}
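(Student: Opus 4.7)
The strategy is the one used in \cite{DLMgreen} for the co-dimension one case: approximate $u$ by the $\LL_0$-solution $u^0$ that matches the Dirichlet data of $u$, apply the Key Lemma \ref{lem Ju0 decay} to $u^0$ to obtain decay of $J_{u^0}(r)$, and control the discrepancy $u - u^0$ via the DKP-type estimate of Lemma \ref{l319}.

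Concretely, pick $\A_0 \in \cA_0(\mu_0)$ attaining the infimum in \eqref{def gamma} for $\gamma(0,1)$, and let $u^0 \in W(B_1)$ be the $\LL_0$-solution with $u^0 - u \in W_0(B_1 \setminus \Gamma)$ furnished by Lemma \ref{lem comp u u0}. Write $v = u - u^0$. The minimizing property of $\lambda_{0,s}$ from Lemma \ref{lem orth}, together with the triangle inequality, gives at every scale $s \in (0,1]$
\[
J_u(s) \leq 2 J_{u^0}(s) + \frac{2}{m(B_s)} \int_{B_s} |\nabla v|^2 \, dm, \qquad J_{u^0}(s) \leq 2 J_u(s) + \frac{2}{m(B_s)} \int_{B_s} |\nabla v|^2 \, dm.
\]
Applying the Key Lemma \ref{lem Ju0 decay} to $u^0$ with a parameter proportional to $\theta_0$ yields $J_{u^0}(r) \leq C \theta_0 J_{u^0}(1)$ for $r \leq r_0(\theta_0, n, d, \mu_0)$, so chaining the three ingredients reduces \eqref{Jur} to a small-scale estimate of the form
\[
\frac{1}{m(B_r)} \int_{B_r} |\nabla v|^2 \, dm \;\leq\; C \theta_0 J_u(1) \,+\, C K^{(2-p)/2} r^{-(d+1)} J_u(1) \,+\, C_K r^{-(d+1)} \gamma(0,1)^2 E_u(1),
\]
the large-scale counterpart being Lemma \ref{l319} used with a small $\delta$.

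The delicate point is that the naive bound $\int_{B_r} |\nabla v|^2 \, dm \leq \int_{B_1} |\nabla v|^2 \, dm$ loses the factor $m(B_r)^{-1} \approx r^{-(d+1)}$, producing $r^{-(d+1)} E_u(1)$ rather than $r^{-(d+1)} J_u(1)$ --- exactly the structural mismatch that \eqref{Jur} is designed to overcome. To repair this, we perform a Chebyshev-type splitting at a carefully chosen threshold $M$: on the sublevel set $\{|\nabla v|^2 \leq M\}$ we gain a pointwise bound contributing at most $M$ after dividing by $m(B_r)$, while on the superlevel set we use the elementary inequality $\mathbb{1}_{\{|\nabla v|^2 > M\}} |\nabla v|^2 \leq M^{1-p/2} |\nabla v|^p$. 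The higher integrability $\nabla u, \nabla u^0 \in L^p$ supplied by the reverse H\"older inequality (Lemma \ref{lem RH}) controls the resulting $L^p$ integral, while Lemma \ref{l319} (applied to $v$ at scale $1$) provides the $\gamma^2$-smallness; calibrating $M$ as a multiple of $K^{(2-p)/2} r^{-(d+1)} J_u(1)$ and invoking Lemma \ref{lem u=u^0 d<n-1} (which makes $E_{u^0}(1)$ comparable to $E_u(1)$) matches both the $K^{(2-p)/2} r^{-(d+1)} J_u(1)$ and the $C_K r^{-(d+1)} \gamma^2 E_u(1)$ contributions in \eqref{Jur}.

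The main obstacle is executing this Chebyshev/reverse-H\"older step in the weighted, degenerate setting of $dm = w(t)\,dX$ on balls centered on the lower-dimensional boundary $\Gamma = \R^d$. Fortunately, $m$ is doubling and Lemma \ref{lem RH} provides the required $L^p$-higher integrability with the same structural dependence as in the classical case, so the argument of \cite{DLMgreen}, Lemma~3.24, transfers essentially verbatim once Carleson half-balls and the Lebesgue measure are replaced by balls centered on $\Gamma$ and the weighted measure $m$.
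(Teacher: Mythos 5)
Your skeleton is the paper's: approximate $u$ by the $\LL_0$-solution $u^0$ with matching data, apply the Key Lemma to $u^0$, bound $\nabla(u-u^0)$ via Lemma \ref{lem comp u u0}, and finish with a level-set splitting plus higher integrability. But there is a genuine gap at the step you yourself flag as delicate. The decisive ingredient you are missing is the decomposition of $u$ and $u^0$ into affine plus non-affine parts, $u=v+\lambda_1(u)\abs{t}$ and $u^0=v_0+\lambda_1(u)\abs{t}$, \emph{before} any $L^p$ estimate is invoked. Lemma \ref{lem RH} applies to solutions, so it only gives $\int_{B_{1/2}}\abs{\nabla u^0}^p dm\lesssim E_u(1)^{p/2}m(B_{1/2})$ (via Lemma \ref{lem u=u^0 d<n-1}). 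Feeding that into your superlevel-set bound produces a term of size $M^{1-p/2}E_u(1)^{p/2}$, and with your calibration $M\sim K^{(2-p)/2}r^{-d-1}J_u(1)$ this is, up to harmless factors, $\br{E_u(1)/J_u(1)}^{(p-2)/2}E_u(1)$. Since $E_u(1)/J_u(1)$ can be arbitrarily large (that is the whole point of the lemma), this is not dominated by the right-hand side of \eqref{Jur}. To land on $K^{\frac{2-p}{2}}J_u(1)$ one needs $L^p$ bounds of order $J_u(1)^{p/2}$, i.e.\ reverse H\"older inequalities for $v_0$ and $v$ --- which are \emph{not} solutions of homogeneous equations ($v$ solves $\LL v=\divg((\A_0-\A)\lambda_1(u)w\nabla\abs{t})$), so Lemma \ref{lem RH} cannot be cited for them. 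The paper proves the bespoke inequalities \eqref{eq RH1}--\eqref{eq RH2}, with the correction term $\abs{\lambda_1(u)}\br{\int\abs{\A-\A_0}^p dm}^{1/p}$, precisely to handle this; that is the technical heart of the argument and it is absent from your proposal.

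Two further points. First, the paper's splitting threshold is $K E_u(1)$ on the set $\{\abs{\nabla v_0}^2\le KE_u(1)\}$, applied to the integral $\int\abs{\A-\A_0}^2\abs{\nabla v_0}^2dm$ coming from Lemma \ref{lem comp u u0}; keeping the factor $\abs{\A-\A_0}^2$ on the sublevel set is what produces $K\gamma(0,1)^2E_u(1)$ there, and the affine part contributes the clean term $\lambda_1(u)^2\int_{B_{1/2}}\abs{\A-\A_0}^2\abs{\nabla\abs{t}}^2dm\le E_u(1)\,m(B_{1/2})\,\gamma(0,1)^2$ by \eqref{eq lambdaE}. Second, invoking Lemma \ref{l319} at this stage is also problematic: it carries an irreducible $\delta E_u(1)$ with $\delta$ fixed, which after division by $m(B_r)$ becomes $\delta r^{-d-1}E_u(1)$ --- a term with no counterpart in \eqref{Jur} (it is neither multiplied by $\gamma^2$ nor absorbable into $\theta_0 J_u(1)$). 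In the paper, Lemma \ref{l319} is used only later, in Lemma \ref{lem Eu lwbd} and Corollary \ref{cor beta}, where one divides by $E_u(r)\gtrsim E_u(1)$ and the $\delta$-term is harmless; the correct tool here is Lemma \ref{lem comp u u0}, which keeps the factor $\abs{\A-\A_0}^2$ inside the integral.
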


\begin{proof}
In what follows, we shall follow rather closely the proof of the $d=n-1$ case,
and refer to \cite{DLMgreen} for an occasional missing detail. 
We  shall choose a $u_0$ verifying $\LL_0u_0=0$, use the decay estimates for $J_{u_0}(r)$ to get a decay estimate for $J_u(r)$ with an error \eqref{eq Jur step1}. Then using some reverse H\"older estimates,
we shall
control the error by terms on the right-hand side of \eqref{eq Ju0 decay}.  

We write $u$ as an affine part plus its complement on $B_1$, i.e.
\[
u(x,t)=v(x,t)+\lambda_1(u)\abs{t}.
\]
Notice that $E_v(1)=J_u(1)$ by 
 the definitions near \eqref{def lambda}, and in addition
\begin{equation}\label{eq lambdaE}
    \lambda_1(u)^2\le \frac{1}{m(B_1)}\int_{B_1}\abs{\nabla_t u}^2w(t)\, dxdt \le E_u(1) 
\end{equation}

Choose a matrix $\A_0$
in the compact set $\cA_0(\mu_0)$, that attains the infimum in the definition \eqref{def gamma} of $\gamma(0,1)$, and let 
$\LL_0=-\divg\br{\A_0 w(t)\nabla}$ as usual.

Now consider the $\LL_0$-harmonic extension $u_0$ of the restriction of $u$ to 
$\bdy (B_{1/2}\setminus\Gamma)$, that is, the unique solution $u_0\in W(B_{1/2})$ to $\LL_0 u_0=0$ in $B_{1/2}\setminus\Gamma$, with $u_0-u\in W_0(B_{1/2}\setminus\Gamma)$. Write 
\begin{equation}\label{ext u0}
 u_0(x,t) = v_0(x,t) + \lambda_1(u) \abs{t}.   
\end{equation}
Since $\LL_0\abs{t}=0$, $v_0\in W(B_{1/2})$ verifies 
\begin{equation}
    \LL_0 v_0=0\quad \text{in  } B_{1/2}\setminus\Gamma \ \text{ and } 
    v_0-v\in W_0(B_{1/2}\setminus\Gamma).
\end{equation}
In particular, $Tv_0=Tv=0$ on $B_{1/2}\cap\Gamma$. 

We claim that for any $\theta_0\in(0,1)$, there exists $r_0\in(0,1/4)$ depending on $\theta_0$, $d$, $n$ and $\mu_0$, and a constant $C$ depending only on $d,n,\mu_0$, such that 
\begin{equation} \label{eq Jur step1}
J_u(r) \leq C\theta_0 J_u(1)
+\frac{C(\theta_0+r^{-d-1})}{m(B_{1/2})}\int_{B_{1/2}}\abs{\A-\A_0}^2\abs{\nabla u_0}^2dm,
\end{equation}
for any $0<r\le r_0$.

To see this, we use the inequality $(a+b+c)^2 \leq 3(a^2 + b^2 + c^2)$ to write
\begin{multline}\label{eq Jur3tm}
 J_u(r)
 \le \frac3{m(B_r)}\int_{B_r}\abs{\nabla(u_0 
    -\lambda_r(u_0)\, t)}^2     \,dm
    + \frac3{m(B_r)}\int_{B_r}\abs{\nabla(u-u_0)}^2dm\\
    +\frac3{m(B_r)}\int_{B_r}\abs{\nabla(\lambda_r(u_0)\abs{t}-\lambda_r(u)\abs{t})}^2 \,dm.
\end{multline}
The last integral can be controlled by the second integral on the right-hand side of 
\eqref{eq Jur3tm}, as follows : 
\begin{multline}\label{eq Jur0.3}
    \frac{1}{m(B_r)}\int_{B_r}\abs{\nabla(\lambda_r(u_0)\abs{t}-\lambda_r(u)\abs{t})}^2 \,dm
    =(\lambda_r(u_0)-\lambda_r(u))^2\\
    =\br{\frac{1}{m(B_r)}\int_{B_r}\frac{\nabla_t(u-u_0)\cdot t}{\abs{t}} dm 
    }^2
    \le\frac{1}{m(B_r)}\int_{B_r}\abs{\nabla(u-u_0)}^2dm.
\end{multline}
 For the second integral on the right-hand side of \eqref{eq Jur3tm}, we enlarge $B_r$ and apply Lemma \ref{lem comp u u0} to get 
\begin{align}
   \frac{1}{m(B_r)}\int_{B_r}\abs{\nabla(u-u_0)}^2dm&\le\frac{r^{-(d+1)}}{m(B_{1/2})}\int_{B_{1/2}}\abs{\nabla(u-u_0)}^2dm\nonumber\\
   &\le \frac{Cr^{-(d+1)}}{m(B_{1/2})}\int_{B_{1/2}}\abs{\A-\A_0}^2\abs{\nabla u_0}^2dm.\label{eq Jur0.2}
\end{align}

Finally, by Lemma \ref{lem Ju0 decay}, for any fixed $\theta_0\in(0,1)$,  there is some $r_0=r_0(\theta_0,n,d,\mu_0)\in(0,1/4)$ such that the first integral in \eqref{eq Jur3tm} is bounded by $\frac{\theta_0}{3}J_{u_0}(1/2)$. On the other hand, the same sort of computation as 
above gives
\begin{multline*}
   J_{u_0}(1/2)
    \le 3J_{u}(1/2)+\frac{3}{m(B_{1/2})}\int_{B_{1/2}}\abs{\nabla(u-u_0)}^2dm
    +3(\lambda_{1/2}(u)-\lambda_{1/2}(u_0))^2\\
    \le 3J_{u}(1/2)+\frac{C}{m(B_{1/2})}\int_{B_{1/2}}\abs{\A-\A_0}^2\abs{\nabla u_0}^2dm.
\end{multline*}
Combining this with \eqref{eq Jur3tm}, \eqref{eq Jur0.3} and \eqref{eq Jur0.2}, we obtain 
\[
J_u(r) \leq \theta_0 J_u(1/2)
+\frac{C(\theta_0+r^{-d-1})}{m(B_{1/2})}\int_{B_{1/2}}\abs{\A-\A_0}^2\abs{\nabla u_0}^2dm,
\]
which is almost \eqref{eq Jur step1}. To show \eqref{eq Jur step1}, we only need to observe that by the minimizing property of $\lambda_{1/2}(u)$ (see \eqref{eq min lambda}),  
\[
J_u(1/2)\le m(B_{1/2})^{-1}\int_{B_{1/2}}\abs{\nabla(u(x,t)-\lambda_1(u)\abs{t})}^2w(t)\, dxdt\le CJ_u(1).
\]
This finishes the proof of \eqref{eq Jur step1}.

Now it suffices to control the second term on the right-hand side of \eqref{eq Jur step1}. We use the decomposition of $u_0$ as in \eqref{ext u0}, as well as \eqref{eq lambdaE} to write
\begin{multline}\label{3b32}
     m(B_{1/2})^{-1}\int_{B_{1/2}}\abs{\A-\A_0}^2\abs{\nabla u_0}^2dm\\
     \le \frac{2}{m(B_{1/2})}\int_{B_{1/2}}\abs{\A-\A_0}^2\abs{\nabla v_0}^2dm
     +\frac{2\lambda_1(u)^2}{m(B_{1/2})}\int_{B_{1/2}}\abs{\A-\A_0}^2\abs{\nabla \abs{t}}^2dm\\
    \le  \frac{2}{m(B_{1/2})}\int_{B_{1/2}}\abs{\A-\A_0}^2\abs{\nabla v_0}^2dm
    +2E_u(1)\gamma(0,1)^2.
\end{multline}
We claim that $m(B_{1/2})^{-1}\int_{B_{1/2}}\abs{\A-\A_0}^2\abs{\nabla v_0}^2dm$ can be estimated as in the $d=n-1$ case as long as one has the following reverse H\"older type estimates.

For some $p=p(d,n,\mu_0)>2$ sufficiently close to $2$,
 \begin{equation}\label{eq RH1}
     \br{\int_{B_{1/2}}\abs{\nabla v_0}^pdm}^{1/p}
  \lesssim \br{\int_{B_{1/2}}\abs{\nabla v_0}^2dm}^{1/2}
  +\br{\int_{B_{1/2}}\abs{\nabla v}^pdm}^{1/p},
 \end{equation}
 and 
 \begin{equation}\label{eq RH2}
     \br{\int_{B_{1/2}}\abs{\nabla v}^pdm}^{1/p}\lesssim \br{\int_{B_1}\abs{\nabla v}^2dm}^{1/2}+\abs{\lambda_1(u)}\br{\int_{B_1}\abs{\A-\A_0}^pdm}^{1/p},
 \end{equation}
where the implicit constants depend on $d$, $n$, $\mu_0$ and $p$.
We postpone the proof of these two inequalities to Section \ref{sec RH}.

Now fix any $K>0$. Assuming \eqref{eq RH1} and \eqref{eq RH2}, we can control the contribution from the set 
$$B_{\frac12}\setminus \set{X\in B_{\frac12}\setminus\Gamma: \abs{\nabla v_0(X)}^2\le KE_u(1)}$$
to the integral, much as in the case $d=n-1$, and finally obtain
\[
 \int_{B_{1/2}}\abs{\A-\A_0}^2\abs{\nabla v_0}^2dm
   \le  CK^{\frac{2-p}{2}}J_u(1)+C\br{K+K^{\frac{2-p}{2}}}\gamma(0,1)^2E_u(1).
\]
From this and \eqref{3b32}, the desired estimate \eqref{Jur} follows.
\end{proof}

Using Lemma \ref{lem Eu0 lwbd}, Lemma \ref{l319} and Lemma \ref{lem u=u^0 d<n-1}, one obtains the following analogue of Lemma \ref{lem Eu0 lwbd} for positive solutions of $\LL u=0$.

\begin{lem}\label{lem Eu lwbd}
Let $u\in W_r(B_5)$ be a positive solution of $\LL u=0$ in $B_5\setminus\Gamma$, with $Tu=0$ on $\Gamma\cap B_5$. 
Then for any $\delta>0$ and $0<r<1/2$,
\begin{equation} \label{3a30}
E_u(r)
\ge\br{ \frac{1-C' r^2}{C}  
-\frac{C''\br{\delta+C_\delta\gamma(0,1)^2}}{r^{d+1}}}
E_u(1),
\end{equation}
where $C$, $C'$, $C''$ are positive constants depending only on $d$, $n$ and $\mu_0$.
\end{lem}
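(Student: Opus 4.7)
The plan is to compare $u$ on $B_1$ with the $\LL_0$-harmonic extension $u^0$ produced by Lemma \ref{l319}, apply the positive-solution lower bound of Lemma \ref{lem Eu0 lwbd} to $u^0$, and absorb the discrepancy between $u$ and $u^0$ via Lemma \ref{l319} and Lemma \ref{lem u=u^0 d<n-1}.

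Concretely, choose $\A_0\in\cA_0(\mu_0)$ realizing the infimum in \eqref{def gamma} at $(x,r)=(0,1)$, and let $u^0\in W(B_1)$ be the unique solution of $\LL_0 u^0=0$ in $B_1\setminus\Gamma$ with $u^0-u\in W_0(B_1\setminus\Gamma)$. Since $u\ge 0$ with $Tu=0$ on $\Gamma\cap B_1$, the truncation argument already carried out at the end of the proof of Lemma \ref{lem Lip} (testing $\LL_0 u^0=0$ against $\min\{u^0,-\varepsilon\}+\varepsilon$) shows $u^0\ge 0$ in $B_1$. Consequently $u^0$ meets the hypotheses of Lemma \ref{lem Eu0 lwbd}, which delivers
\begin{equation*}
E_{u^0}(r)\ge C(1-C' r^2)E_{u^0}(1)\qquad\text{for } 0<r<1/2.
\end{equation*}
Combining this with Lemma \ref{lem u=u^0 d<n-1}, which gives $E_{u^0}(1)\ge \mu_0^{-4}E_u(1)$, yields $E_{u^0}(r)\ge C^{-1}(1-C'r^2)E_u(1)$ with $C$ depending only on $d,n,\mu_0$.

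Next, the elementary inequality $|\nabla u|^2\ge\tfrac12|\nabla u^0|^2-|\nabla u-\nabla u^0|^2$ (a direct consequence of $|\nabla u^0|^2\le 2|\nabla u|^2+2|\nabla u-\nabla u^0|^2$), averaged over $B_r$, gives
\begin{equation*}
E_u(r)\ge \tfrac12 E_{u^0}(r)-\frac1{m(B_r)}\int_{B_r}|\nabla u-\nabla u^0|^2\,dm.
\end{equation*}
The error term is handled by enlarging the domain of integration from $B_r$ to $B_1$, applying Lemma \ref{l319}, and invoking $m(B_r)\approx r^{d+1}$ (see \eqref{mBr near}):
\begin{equation*}
\frac1{m(B_r)}\int_{B_r}|\nabla u-\nabla u^0|^2\,dm\le\frac1{m(B_r)}\int_{B_1}|\nabla u-\nabla u^0|^2\,dm\le\frac{C''\bigl(\delta+C_\delta\gamma(0,1)^2\bigr)}{r^{d+1}}E_u(1).
\end{equation*}

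Substituting the last two bounds into the previous inequality produces exactly \eqref{3a30}. No step in this argument is subtle; the only place to be cautious is the verification that the comparison solution $u^0$ is nonnegative, which is required so that Lemma \ref{lem Eu0 lwbd} (stated only for positive solutions) is applicable. All constants $C$, $C'$, $C''$ depend only on $d$, $n$, $\mu_0$, while $C_\delta$ depends additionally on $\delta$.
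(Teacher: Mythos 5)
Your argument is correct and is exactly the route the paper intends: the authors give no written proof but state that the lemma follows from Lemma \ref{lem Eu0 lwbd}, Lemma \ref{l319} and Lemma \ref{lem u=u^0 d<n-1}, which is precisely the combination you carry out (including the necessary check that $u^0\ge 0$, so that Lemma \ref{lem Eu0 lwbd} applies, and the trivial-case remark when $E_u(1)=0$ so that $u^0$ is in fact positive and not identically zero).
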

As before, we will only find this useful when the parenthesis is under control. 
\ms

With Lemma \ref{lem Jur}, Lemma \ref{lem Eu lwbd}, and 
Lemma \ref{lem gamma} at hand, we are finally ready to prove the decay estimate for $\beta_u(x,r)$, the normalized non-affine energy of solutions of $\LL u=0$. 

Let $u$ be as in Lemma \ref{lem Eu lwbd}. 
We first choose a $\theta_0\in(0,1)$ so that $C\theta_0<\frac{1}{16C}$ in Lemma \ref{lem Jur}. 
By Lemma \ref{lem Jur}, this choice of $\theta_0$ gives an $r_0\in(0,1/4)$ such that \eqref{Jur} holds for any $r\le r_0$. Now we choose $r=\tau_0\le r_0$ so that $C'r^2<1/2$ in \eqref{3a30}. Then we require
\begin{equation}\label{eq gamma sm}
    \gamma(0,1)^2\le \epsilon_0,
\end{equation} and choose $\eps_0$ and $\delta>0$ sufficiently small (depending on $\tau_0$) so that 
\[C''\br{\delta+C_\delta\eps_0}\tau_0^{-d-1} < \frac1{4C}\] in \eqref{3a30}.
This way, \eqref{3a30} implies that
\begin{equation} \label{3a34}
E_u(r)
\ge  \frac{1}{4C}   E_u(1).
\end{equation}
 
We divide both sides of \eqref{Jur} by $E_u(r)$ and get that
\begin{equation} \label{3a36}
\beta_u(0,r) \leq C \br{\theta_0+K^{\frac{2-p}{2}}r^{-d-1}} \frac{J_u(1)}{E_u(r)} 
+\frac{C_K}{r^{d+1}}\gamma(0,1)^2\frac{E_u(1)}{E_u(r)}.
\end{equation}
Then we choose $K>0$ sufficiently small (depending on $\tau_0$) so that $CK^{\frac{2-p}{2}}\tau_0^{-d-1}<\frac{1}{16C}$. Now assuming \eqref{eq gamma sm}, our choice of $\theta$, $\eps_0$, $\delta$ and $K$ guarantees that we can apply \eqref{3a34} and deduce from \eqref{3a36} that 
\begin{equation}
    \beta_u(0,\tau_0) \leq\frac{1}{2}\beta_u(1) + C_{\tau_0}\gamma(0,1)^2.
\end{equation}

We recapitulate what we obtained in the next corollary.
Of course, by translation and dilation invariance, what was done on the unit ball $B_1$ can also 
be done for any other $B_R(x)$, $x\in\Gamma$, $R>0$. We use this opportunity to state the general case, which of course can easily be deduced from the case of $B_1$ by homogeneity.

\begin{cor}\label{cor beta}
There exist constants $\tau_0 \in (0,10^{-1})$ and $C > 0$
which depend only on $d$, $n$ and $\mu_0$, such that
if $u\in W_r(B_{5R}(x))$ is a positive solution of $\LL u=0$ 
in $B_{5R}(x)\setminus\Gamma$, with $Tu=0$ on $\Gamma\cap B_{5R}(x)$, 
then 
\begin{equation} \label{3a39}
\beta_u(x,\tau_0 R) \leq  \frac12 \beta_u(x,R) + C \gamma(x,R)^2.
\end{equation}
\end{cor}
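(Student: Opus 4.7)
\textbf{Proof plan for Corollary \ref{cor beta}.}
By translation and dilation invariance of $\LL$, of the weight $w$, and of the quantities $\beta_u$ and $\gamma$, it suffices to treat the normalized case $x = 0$, $R = 1$: I must show that any positive solution $u \in W_r(B_5)$ of $\LL u = 0$ in $B_5\setminus\Gamma$ with $Tu = 0$ on $\Gamma\cap B_5$ satisfies $\beta_u(0,\tau_0) \leq \tfrac12 \beta_u(0,1) + C \gamma(0,1)^2$ for a suitable absolute $\tau_0$ and $C$. All of the genuine analysis has already been carried out in Lemmas \ref{lem Jur} and \ref{lem Eu lwbd}; the corollary amounts to a careful calibration of constants and a dichotomy argument.

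The plan is to split on the size of $\gamma(0,1)^2$. Fix a small threshold $\eps_0 > 0$ to be chosen. If $\gamma(0,1)^2 \geq \eps_0$, then the universal bound $\beta_u \leq 1$ from \eqref{eq beta<1} gives $\beta_u(0,\tau_0) \leq 1 \leq \eps_0^{-1} \gamma(0,1)^2$, and the inequality holds trivially with $C \geq \eps_0^{-1}$. The substantive case is the complementary regime $\gamma(0,1)^2 < \eps_0$, in which I can apply Lemma \ref{lem Eu lwbd} productively.

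In that regime I calibrate the five parameters $\theta_0,\tau_0,\delta,\eps_0,K$ in the specific order dictated by the lemmas. First pick $\theta_0 \in (0,1)$ small enough that $C \theta_0 \leq 1/(16C)$, where $C$ is the constant of Lemma \ref{lem Jur}; Lemma \ref{lem Jur} then supplies a radius $r_0 = r_0(\theta_0, d, n, \mu_0)$. Next, choose $\tau_0 \in (0, r_0]$ so that $C' \tau_0^2 < 1/2$ in Lemma \ref{lem Eu lwbd}. Then pick $\delta > 0$ and $\eps_0 > 0$ so small that $C''(\delta + C_\delta \eps_0)\tau_0^{-d-1} < 1/(4C)$ (using the same $C$). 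Granting $\gamma(0,1)^2 \leq \eps_0$, Lemma \ref{lem Eu lwbd} applied with $r = \tau_0$ produces the key lower bound
\[
E_u(\tau_0) \geq \frac{1}{4C}\, E_u(1).
\]
Finally, choose $K > 0$ so that $C K^{(2-p)/2} \tau_0^{-d-1} \leq 1/(16C)$. Dividing the conclusion of Lemma \ref{lem Jur} (at $r = \tau_0$) by $E_u(\tau_0)$ and substituting these choices yields
\begin{align*}
\beta_u(0,\tau_0)
&\leq C\bigl(\theta_0 + K^{(2-p)/2} \tau_0^{-d-1}\bigr)\, \frac{J_u(1)}{E_u(\tau_0)}
+ \frac{C_K}{\tau_0^{d+1}}\, \gamma(0,1)^2\, \frac{E_u(1)}{E_u(\tau_0)} \\
&\leq \frac{2}{16C}\cdot 4C\cdot \beta_u(0,1) + \frac{4 C\, C_K}{\tau_0^{d+1}}\, \gamma(0,1)^2
= \tfrac12 \beta_u(0,1) + C_{\tau_0}\, \gamma(0,1)^2,
\end{align*}
using $J_u(1)/E_u(1) = \beta_u(0,1)$ and $E_u(1)/E_u(\tau_0) \leq 4C$. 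Combining with the trivial case of the dichotomy gives \eqref{3a39} with $C = \max(C_{\tau_0},\eps_0^{-1})$.

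The only subtlety worth flagging is the strict order in which the five parameters must be fixed: $\theta_0$ must be chosen before $\tau_0$ (it determines $r_0$ via Lemma \ref{lem Jur}), both $\delta$ and $\eps_0$ depend on $\tau_0$ through Lemma \ref{lem Eu lwbd}, and $K$ depends on $\tau_0$ through Lemma \ref{lem Jur}. Getting this ordering wrong would produce circular constants. Beyond bookkeeping there is no new analytical ingredient; the purpose of the corollary is to package Lemmas \ref{lem Jur} and \ref{lem Eu lwbd} into the single self-improving inequality \eqref{3a39} that will drive the Carleson estimate in the next section.
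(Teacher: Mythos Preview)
Your proof is correct and follows essentially the same approach as the paper: reduce to $x=0$, $R=1$ by scaling, split on whether $\gamma(0,1)^2$ exceeds a threshold $\eps_0$, handle the large-$\gamma$ case trivially via $\beta_u\leq 1$, and in the small-$\gamma$ case calibrate the parameters $\theta_0,\tau_0,\delta,\eps_0,K$ in exactly the order you describe so that Lemmas \ref{lem Jur} and \ref{lem Eu lwbd} combine to give the estimate. Your explicit tracking of the parameter-dependency order is a helpful addition; the content is otherwise identical to the paper's argument.
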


\begin{proof}
The discussion above gives the result under the additional condition that 
$\gamma(x,R) \leq \varepsilon_0$. But we now have chosen $\tau_0$
and $\varepsilon_0$, and if $\gamma(x,R) > \varepsilon_0$, \eqref{3a39}
holds trivially (maybe with a larger constant), because $\beta_u(x,\tau_0 R) \leq 1$ by \eqref{eq beta<1}.
\end{proof}

Finally, Theorems \ref{mt} and \ref{mt2} 
can be deduced from the decay estimate \eqref{3a39} exactly as what was done in the $d=n-1$ case, as $\beta_u(x,r)$ is a function in $\Real^{d+1}_+$ and the goal is to prove a Carleson estimate in $\Real^{d+1}_+$. We refer readers to Section 4.2 in \cite{DLMgreen} for details.

\subsection{Proof of the Reverse H\"older inequalities}\label{sec RH}
\begin{proof}[Proof of \eqref{eq RH1}]
The idea of the proof is essentially from \cite{giaquinta1983multiple}, Chapter V. However, we need to treat the boundary estimates more carefully as this time the boundary is of mixed co-dimensions. 

 Recall that $\LL_0 v_0=0$ in $B_{1/2}\setminus\Gamma$, with $v_0-v\in W_0(B_{1/2}\setminus\Gamma)$. Since $v\in W(B_{1/2})$ with $Tv=0$ on $B_{1}\cap \Gamma$, $Tv_0=0$ on $B_{1/2}\cap \Gamma$. Let $R_0=10^{-2}n^{-1/2}$. Set
 \[
 Q_R(X):=\set{Y\in\Rn: \abs{Y_i-X_i}<R \ \text{ for } 
 i=1,2,\dots, n}, \quad R>0.
 \]
We claim that there exists 
$p=p(d,n,\mu_0)>2$ such that 
 \begin{multline}\label{eq RH1 QR0}
     \br{m(Q_{R_0/2}(X_0))^{-1}\int_{Q_{R_0/2}(X_0)\cap B_{1/2}}\abs{\nabla v_0}^pdm}^{1/p}\\
     \lesssim \br{m(Q_{R_0}(X_0))^{-1}\int_{Q_{R_0}(X_0)\cap B_{1/2}}\abs{\nabla v_0}^2dm}^{1/2}\\
     +\br{m(Q_{R_0}(X_0))^{-1}\int_{Q_{R_0}(X_0)\cap B_{1/2}}\abs{\nabla v}^pdm}^{1/p}
\end{multline}
for any $Q_{R_0}(X_0)\subset\Rn$ with 
 $Q_{R_0}(X_0)\cap B_{1/2}\neq\emptyset$. 
Notice that the first integral concerns the cube $Q_{R_0/2}(X_0)$, while the two other ones
are on the larger $Q_{R_0}(X_0)$; this will allow the localization argument below.
Once this is proved, one can obtain the desired estimate \eqref{eq RH1} 
by covering $B_{1/2}$ with finitely many cubes $Q_{R_0}(X_0)$.

Fix $Q_{R_0}(X_0)$ with $Q_{R_0/2}(X_0)\cap B_{1/2}\neq\emptyset$. 
Let $X\in Q_{R_0}(X_0)$ be given, and 
pick any radius
$R<\frac{1}{12}\dist(X,\bdy Q_{R_0}(X_0))$.
We need to introduce $R$ because we will apply a local result soon.

Let $q:=\frac{2n}{n+2}$. There are three possibilities: (1) $Q_{3R}(X)\subset B_{1/2}$, (2) $Q_{3R}(X)\cap B_{1/2}\neq\emptyset$ and $Q_{3R}(X)\cap \stcomp{ B_{1/2}}\neq\emptyset$, (3) $Q_{3R}(X)\subset \stcomp{B_{1/2}}$. The last situation is trivial.

If $Q_{3R}(X)\subset B_{1/2}$, then we can apply Lemma \ref{lem RH 2-2+} to get
\begin{equation}\label{eq RHcasein}
   m(Q_R(X))^{-1}\int_{Q_R(X)}\abs{\nabla v_0}^2dm\le \frac{C}{m(Q_{3R}(X))}\int_{Q_{3R}(X)}\abs{\nabla v_0}^q dm.
\end{equation}
 We will see later how to continue in this case, but let us first discuss (2).
If $Q_{3R}(X)\cap B_{1/2}\neq\emptyset$ and $Q_{3R}(X)\cap \stcomp{ B_{1/2}}\neq\emptyset$, choose $\eta\in C_0^\infty(Q_{3R}(X))$ with $\eta=1$ on $Q_R(X)$ and $\abs{\nabla\eta}\lesssim \frac{1}{R}$. Taking $(v-v_0)\eta^2\in W_0(B_{1/2}\setminus\Gamma)$ as a test function in $\LL_0v_0=0$, and using the ellipticity conditions on $\A_0$, and then the 
Cauchy-Schwarz inequality, one can get the estimate
\begin{equation}\label{eq RHv0}
    \int_{Q_R(X)\cap B_{1/2}}\abs{\nabla v_0}^2dm\le C\int_{Q_{3R}(X)\cap B_{1/2}}\abs{\nabla v}^2dm +\frac{C}{R^2}\int_{Q_{3R}(X)\cap B_{1/2}}\abs{v_0-v}^2dm.
\end{equation}
We want to control $\int_{Q_{3R}(X)\cap B_{1/2}}\abs{v_0-v}^2dm$ using the 
Poincar\'e inequality. Extend $v_0-v$ by zero outside $B_{1/2}$ and denote by $h$ the extended function. We need to discuss two cases. 

\textbf{Case 1:} $Q_{4R}(X)\cap\Gamma=\emptyset$. Then $\delta(X)\ge 4R$, where $\delta(X)=\dist(X,\Gamma)$ as usual. Since for any $Z\in Q_{3R}(X)$, $\delta(X)-3R\le\delta(Z)\le\delta(X)+3R$, we have 
\(
\frac{1}{4}\le \frac{\delta(Z)}{\delta(X)}\le\frac{7}{4}
\).
This implies that
\[
C_{n,d}w(X)\le w(Z)\le C_{n,d}w(X) \qquad\text{for }Z\in Q_{3R}(X),
\]
and thus
\[
    \int_{Q_{3R}(X)}\abs{h(Z)}^2w(Z)dZ\le C_{n,d}w(X)\int_{Q_{3R}(X)}\abs{h(Z)}^2dZ.
\]
Since $\bdy B_{1/2}$ is smooth, $Q_{3R}(X)\cap \stcomp{B_{1/2}}\neq\emptyset$ implies that $\abs{Q_{7R/2}(X)\setminus B_{1/2}}\ge\gamma\abs{Q_{7R/2}(X)}$ for some $\gamma>0$. 
Recalling that $h=0$ in $\stcomp{B_{1/2}}$, we can apply the Sobolev inequality to get
\begin{multline*}
    \int_{Q_{3R}(X)}\abs{h(Z)}^2w(Z)dZ\le Cw(X)\Big(\int_{Q_{7R/2}(X)}\abs{\nabla h}^qdZ\Big)^{\frac{2}{q}}\\
    \le Cw(X)^{-\frac{2}{n}}\Big(\int_{Q_{7R/2}(X)}\abs{\nabla h}^qw(Z)dZ\Big)^{\frac{2}{q}}.
\end{multline*}
Notice that by \eqref{mBr far}, $m(Q_{3R}(X))\approx m(Q_{7R/2}(X))\approx R^n w(X)$. Hence,
\begin{equation*}
    \frac{1}{m(Q_{3R}(X))}\int_{Q_{3R}(X)}h^2dm\le C R^2\Big(m(Q_{7R/2}(X))^{-1}\int_{Q_{7R/2}(X)}\abs{\nabla h}^qw(Z)dZ\Big)^{\frac{2}{q}}.
\end{equation*}
\textbf{Case 2:} $Q_{4R}(X)\cap\Gamma\neq\emptyset$. Then there is $x_0\in\Gamma$ so that $Q_{3R}(X)\subset Q_{7R}(x_0)\subset Q_{11R}(X)$. Enlarging $Q_{3R}(X)$ and applying \eqref{eq Poincare u=0 2-2+}, one has 
\begin{multline*}\label{eq v0-v}
     \frac{1}{m(Q_{3R}(X))}\int_{Q_{3R}(X)}h^2dm\le\frac{C}{m(Q_{7R}(x_0))}\int_{Q_{7R}(x_0)}h^2dm\\
     \le CR^2\Big(m(Q_{7R}(x_0))^{-1}\int_{Q_{7R}(x_0)}\abs{\nabla h}^qdm\Big)^{2/q}\\
     \le CR^2\Big(m(Q_{11R}(X))^{-1}\int_{Q_{11R}(X)}\abs{\nabla h}^qdm\Big)^{2/q}.
\end{multline*}

To summarize, in both Case 1 and Case 2, 
we have
\begin{multline}
    \frac{1}{m(Q_{3R}(X))}\int_{Q_{3R}(X)\cap B_{1/2}}\abs{v_0-v}^2dm\\
    \le C R^2\Big(\frac{1}{m(Q_{11R}(X))}\int_{Q_{11R}(X)\cap B_{1/2}}\abs{\nabla(v_0-v)}^qdm\Big)^{\frac{2}{q}}. 
\end{multline}
Notice that we have chosen $R<\frac{1}{12}\dist(X,\bdy Q_{R_0}(X_0))$ to make sure $Q_{11R}(X)\subset Q_{R_0}(X_0)$.
Set 
\[
g(X)=\begin{cases}
\abs{\nabla v_0(X)}^q \qquad\text{for }X\in Q_{R_0}(X_0)\cap B_{1/2},\\
0 \qquad \text{otherwise},
\end{cases}
\]
\[
f(X)=\begin{cases}
\abs{\nabla v(X)}^q \qquad\text{for }X\in Q_{R_0}(X_0)\cap B_{1/2},\\
0 \qquad \text{otherwise}.
\end{cases}
\]
By \eqref{eq v0-v}, \eqref{eq RHv0} and \eqref{eq RHcasein}, we obtain
\begin{multline*}
    \frac{1}{m(Q_R)}\int_{Q_R}g^rdm\le\frac{C}{m(Q_{3R})}\int_{Q_{3R}}f^rdm\\
    +C\Big(\frac{1}{m(Q_{11R})}\int_{Q_{11R}}gdm\Big)^r+C\Big(\frac{1}{m(Q_{11R})}\int_{Q_{11R}}fdm\Big)^r\\
    \le \frac{C}{m(Q_{11R})}\int_{Q_{3R}}f^rdm
    +C\Big(\frac{1}{m(Q_{11R})}\int_{Q_{11R}}gdm\Big)^r,
\end{multline*}
where $r=\frac{n+2}{n}$. As we noted in the proof of Lemma \ref{lem RH}, 
we can still apply Proposition 1.1 in Chapter V of \cite{giaquinta1983multiple}
when the Lebesgue measure is replaced with the doubling measure $m$. Then \eqref{eq RH1 QR0} follows.
\end{proof}

\begin{proof}[Proof of \eqref{eq RH2}] The proof is similar to that in the $d=n-1$ case. We present the proof for the sake of completeness. 

Set $R_0=10^{-2}n^{-1/2}$. 
For any $X_0=(x_0,t_0)\in B_{1/2}\setminus\Gamma$ and 
$0<R\le R_0$, choose $\eta\in C_0^\infty(Q_{R}(X_0))$, with $\eta\equiv 1$ in $Q_{2R/3}(X_0)$, $\abs{\nabla \eta}\lesssim 1/R$. Here, 
\(
 Q_R(X)=\set{Y\in\Rn: \abs{Y_i-X_i}<R \quad i=1,2,\dots, n}
 \) as before.
We shall write $Q_R$ for $Q_R(X_0)$ when this does not cause a confusion.
Since $u\in W(B_{1/2})$ verifies $\LL u=0$ in $B_1\setminus\Gamma$, we can take any $\vp\in W_0(B_1\setminus\Gamma)$ as test function 
(see \eqref{eq testfucW0}). 
Moreover, recall that 
$v(x,t)=u(x,t)-\lambda\abs{t}$, with $\lambda=\lambda_1(u)$, 
and that $\LL_0 \abs{t}=0$. 
Therefore, for any $\vp\in W_0(B_1\setminus\Gamma)$,
\begin{multline}\label{RH2 eq1}
    0=\int_{B_1}\A\nabla u\cdot\nabla \vp dm=\int_{B_1}\A\nabla v\cdot\nabla \vp dm + \int_{B_1}\A\nabla (\lambda \abs{t})\cdot\nabla\vp dm\\
    =\int_{B_1}\A\nabla v\cdot\nabla\vp dm 
    + \int_{B_1}(\A-\A_0)\nabla (\lambda \abs{t})\cdot\nabla \vp dm . 
\end{multline}

When $\abs{t_0}\le R$, we choose $\vp(X)=v(X)\eta^2(X)$; when instead
$\abs{t_0}>R$, we take $\vp(X)=(v(X)-v_{Q_R})\,\eta^2(X)$, with
$v_{Q_R}=m(Q_R)^{-1}\int_{Q_R}vdm$.
One can check that in both cases $\vp\in W_0(B_1\setminus\Gamma)$.  As in the proof of (3.34) in \cite{DLMgreen}, we plug $\vp$ into \eqref{RH2 eq1}, compute the derivatives, estimate some terms brutally,
and finally use Cauchy-Schwarz inequality, and get the following estimates. \\
\textbf{Case 1:} $\abs{t_0}\le R$.
In this case, we obtain 
\begin{equation*}
     \int_{Q_{2R/3}}\abs{\nabla v}^2dm
    \le \frac{C_{\mu_0}}{R^2}\int_{Q_R}v^2dm
        + C_{\mu_0}\abs{\lambda}^2\int_{Q_R}\abs{\A-\A_0}^2dm.
\end{equation*}
There is $x_0\in\Gamma$ such that $Q_R\subset Q_{2R}(x_0)\subset Q_{3R}$. Since $Tv=0$ on $\Gamma\cap B_1$, we can enlarge $Q_R$ and apply \eqref{eq Poincare u=0 2-2+} to control $\int_{Q_R}v^2dm$
and deduce from the above that
\begin{multline}\label{RH2 eq2}
    \frac{1}{m(Q_{2R/3})}\int_{Q_{2R/3}}\abs{\nabla v}^2dm\\
    \le C\br{\frac{1}{m(Q_{3R})}\int_{Q_{3R}}\abs{\nabla v}^{\frac{2n}{n+2}}dm}^{\frac{n+2}{n}}+ \frac{C\abs{\lambda}^2}{m(Q_R)}\int_{Q_R}\abs{\A-\A_0}^2dm.
\end{multline}
\textbf{Case 2:} $\abs{t_0}> R$.
The same computation as in Case 1 gives
\[    \int_{Q_{2R/3}}\abs{\nabla v}^2dm
    \le\frac{C}{R^2}\int_{Q_R}\abs{v-v_{Q_R}}^2dm+ C\abs{\lambda}^2\int_{Q_R}\abs{\A-\A_0}^2dm.
\]
Then by Lemma \ref{lem Poincare}, \eqref{RH2 eq2} holds. 

Now it follows from \cite{giaquinta1983multiple} V, Proposition 1.1 that
\begin{multline*}
   \frac{1}{m(Q_{R_0/2})}\int_{Q_{R_0/2}}\abs{\nabla v}^pdm\\
\le C\br{\frac{1}{m(Q_{R_0})}\int_{Q_{R_0}}\abs{\nabla v}^2dm}^{\frac{p}{2}}
+ \frac{C\abs{\lambda}^p}{m(Q_{R_0})}\int_{Q_{R_0}}\abs{\A-\A_0}^pdm, 
\end{multline*}
for some $p=p(d,n,\mu_0)>2$, which implies the desired reverse 
H\"older type estimate since $B_{1/2}$ can be covered by finitely many $Q_{R_0/2}$.
\end{proof}

\bibliographystyle{plain}
\bibliography{reference}

\end{document}